\numberwithin{equation}{section}
\newcommand{\inclu}[0] {\ar@{^{(}->}}
\newcommand{\gph}{{\rm gph}\,}
\newcommand{\dist}{{\rm dist}}
\newcommand{\R}{\mathbb{R}}
\newcommand{\cR}{\mathcal{R}}
\newcommand{\EE}{\mathbb{E}}
\newcommand{\sign}{\mathrm{sign}}
\newcommand{\RR}{\mathbb{R}}
\newcommand{\pfail}{p_{\mathrm{fail}}}
\newcommand{\cX}{\mathcal{X}}
\newcommand{\cN}{\mathcal{N}}
\newcommand{\lipsymb}{\mathsf{L}}
\newcommand{\probspace}{\mathcal{Z}}
\newcommand{\domainspace}{\R^d}
\newcommand{\alg}{\mathcal{A}}
\newcommand{\bregmetric}{d_{\Phi}}
\newcommand{\bregsym}{D_{\Phi}^\mathrm{sym}}
\newcommand{\inter}{{\rm int}}
\newcommand{\cl}{\textrm{cl}\,}
\newcommand{\ri}{{\rm ri}\,}
\newcommand{\la}{\langle}
\newcommand{\ra}{\rangle}
\newcommand{\cG}{\mathcal{G}}
\newcommand{\cS}{\mathcal{S}}
\newcommand{\prox}{\mathrm{prox}}
\newcommand{\dom}{\mathrm{dom}\,}
\newcommand{\argmin}{\operatornamewithlimits{argmin}}
\newtheorem{thm}{Theorem}[section]
\newtheorem{definition}[thm]{Definition}
\newtheorem{lem}[thm]{Lemma}
\newtheorem{cor}[thm]{Corollary}
\newtheorem{example}{Example}[section]
\theoremstyle{remark}
\newtheorem{claim}{Claim}
\DeclarePairedDelimiter{\dotp}{\langle}{\rangle}
\begin{document}
	
		\title{Graphical Convergence of Subgradients in Nonconvex Optimization and Learning}
	
	\author{Damek Davis
		\thanks{School of Operations Research and Information Engineering, Cornell University, Ithaca, NY 14850; 	
		\texttt{people.orie.cornell.edu/dsd95/}.} 
	\and
	Dmitriy Drusvyatskiy
		\thanks{Department of Mathematics, University of Washington, Seattle, WA 98195; 	
		\texttt{sites.math.washington.edu/{\raise.17ex\hbox{$\scriptstyle\sim$}}ddrusv}. Research of Drusvyatskiy was partially supported by the AFOSR YIP award FA9550-15-1-0237 and by the NSF DMS   1651851 and CCF 1740551 awards.}}

	\date{}
	\maketitle

\begin{abstract}
We investigate the stochastic optimization problem of minimizing population risk, where the loss defining the risk is assumed to be weakly convex. Compositions of Lipschitz convex functions with smooth maps are the primary examples of such losses. We analyze the estimation quality of such nonsmooth and nonconvex problems by their sample average approximations. Our main results establish dimension-dependent rates on subgradient estimation in full generality and dimension-independent rates when the loss is a generalized linear model. As an application of the developed techniques, we analyze the nonsmooth landscape of a robust nonlinear regression problem.  
\end{abstract}

\noindent{\bf Key words:} subdifferential, stability, population risk, sample average approximation, weak convexity, Moreau envelope, graphical convergence

\section{Introduction.}
Traditional machine learning theory quantifies how well a decision rule, learned from a limited data sample, generalizes to the entire population. The decision rule itself may enable the learner to correctly classify (as in image recognition) or predict the value of continuous statistics (as in regression) of previously unseen data samples.
A standard mathematical formulation of this problem associates to each decision rule $x $ and each sample $z$, a loss $f(x, z)$, which may for example penalize misclassification of the data point by the decision rule.  Then the learner seeks to minimize the \emph{regularized population risk}: 
\begin{align}\label{eq:population}
\min_{x} \; \varphi(x)=f(x) +r(x)\qquad \textrm{ where }\qquad  f(x)=\EE_{z \sim P}\left[ f(x, z) \right].
\end{align}
Here,   $r\colon\R^d\to\R\cup\{+\infty\}$ is an auxiliary function defined on $\R^d$ that may encode geometric constraints  or promote low-complexity structure (e.g., sparsity or low-rank) on $x$. 
The main assumption is that the only access to the population data  is by drawing i.i.d. samples from $P$. Numerical methods then seek to obtain a high-quality solution estimate for \eqref{eq:population} using as few samples as possible. 
Algorithmic strategies for \eqref{eq:population} break down along two lines: streaming strategies and regularized empirical risk minimization (ERM). 

Streaming algorithms in each iteration update a solution estimate of \eqref{eq:population} based on drawing a relatively small batch of samples. Streaming algorithms deviate from each other in
precisely how the sample is used in the update step. The proximal stochastic subgradient method \cite{ghadimilanzang,robust_stoch_opt,tame_paper} is one popular streaming algorithm, although there are many others, such as the stochastic proximal point and Gauss-Newton methods \cite{stochastic_subgrad,duchi_ruan,toulis2}. 
In contrast,  ERM-based algorithms  draw a large sample $S=\{z_1,z_2,\ldots,z_m\}$ at the onset and  output the solution of the deterministic problem 
\begin{align}\label{eq:empirical0}
\min_{x \in \domainspace} \; \varphi_S(x):= f_S(x)+r(x)\qquad \textrm{ where }\qquad f_S(x) := \frac{1}{m} \sum_{i=1}^m f(x, z_i).
\end{align}
Solution methodologies for \eqref{eq:empirical0} depend on the structure of the loss function. One generic approach, often used in practice, is to apply a streaming algorithm directly to \eqref{eq:empirical0}  by interpreting 
$f_S(\cdot)$ as an expectation over the discrete distribution on the samples $\{z_i\}_{i=1}^n$ and
performing multiple passes through the sampled data. Our current work focuses on the ERM strategy, though it is strongly influenced by recent progress on streaming algorithms.

The success of the ERM approach rests on knowing that the minimizer of the surrogate problem \eqref{eq:empirical0} is nearly optimal for the true learning task \eqref{eq:population}.
Quantitative estimates of this type are often based on a uniform convergence principle. 
For example, when the functions $f(\cdot,z)$ are  $L$-Lipschitz continuous for a.e. $z\sim P$, then with probability $1-\gamma$, the estimate holds \cite[Theorem 5]{stocH-shal}:
\begin{equation}\label{eqn:est_gener}
\sup_{x:\, \|x\|_2\leq R} ~|f(x)-f_S(x)|= {\mathcal{O}}\left(\sqrt{\frac{L^2R^2 d\log(m)}{m}\cdot \log \left(\frac{d}{\gamma}\right)}\right).
\end{equation}

An important use of the bound in \eqref{eqn:est_gener} is to provide a threshold beyond which algorithms for the surrogate problem \eqref{eq:empirical0} should terminate, since further 
accuracy on the ERM may fail to improve the accuracy on the true learning task. It is natural to ask if under stronger assumptions, learning is possible with sample complexity that is independent of the ambient dimension $d$. In the landmark paper \cite{stocH-shal}, the authors showed that the answer is indeed yes when the functions $f(\cdot,z)$ are convex and one incorporates further strongly convex regularization. Namely, under an appropriate choice of the parameter $\lambda>0$, the solution of the quadratically regularized problem 
\begin{equation}\label{eqn:quadrag}
\hat x_S:=\argmin_{x\in \R^d} \left\{\varphi_S(x)+\lambda\|x\|^2_2\right\}, 
\end{equation}
satisfies 
\begin{equation}\label{eqn:dim_indep_bound}
\varphi(\hat x_S)-\inf  \varphi\leq \sqrt{\frac{8L^2R^2}{\gamma m}}
\end{equation} 
with probability $1-\gamma$, where $R$ is the diameter of the domain of $r$. In contrast to previous work, the proof of this estimate is not based on uniform convergence. Indeed, uniform convergence in function values may fail in infinite dimensions even for convex learning problems. Instead, the property underlying the dimension independent bound \eqref{eqn:dim_indep_bound} is that the solution $\hat x_S$ of the quadratically regularized ERM \eqref{eqn:quadrag} is stable in the sense of Bousquet-Elisseff \cite{stability_baquett}. That is, the solution $\hat x_S$  does not vary much under an arbitrary perturbation of a single sample $z_i$. Stability of quadratically regularized ERM will also play a central role in our work for reasons that will become clear shortly.

The aforementioned bounds on the accuracy of regularized ERM are only meaningful if one can globally solve the deterministic problems \eqref{eq:empirical0} or \eqref{eqn:quadrag}. Convexity certainly facilitates global optimization techniques.
Many problems of contemporary interest, however, are nonconvex, thereby making ERM-based learning rules intractable.
When the functions $f(\cdot,z)$ are not convex but smooth, the most one can hope for is to find points that are critical for the problem \eqref{eq:empirical0}. Consequently, it may be more informative to estimate the deviation in the gradients, $\sup_{x: \|x\|_2\leq R}\|\nabla f(x)-\nabla {f_S}(x)\|_2$, along with deviations in higher-order derivatives when they exist.  Indeed, then in the simplest setting $r=0$, the standard decomposition 
$$
\|\nabla f(x)\|_2 \leq \underbrace{\|\nabla f(x)-\nabla {f_S}(x)\|_2}_{\text{generalization error}} + \underbrace{\|\nabla {f_S}(x)\|_2}_{\text{optimization error}},
$$
relates near-stationarity for the empirical risk to near-stationarity for the population risk.
Such uniform bounds have recently appeared in \cite{landscape_grad_mont,karthik_new}.

When the loss $f(\cdot,z)$ is neither smooth nor convex, the situation becomes less clear. Indeed, one should reassess what ``uniform convergence of gradients'' should mean in light of obtaining termination criteria for algorithms on the regularized ERM problem. As the starting point, one may replace the gradient by a generalized subdifferential $\partial \varphi(x)$ in the sense of nonsmooth and  variational analysis \cite{RW98,Mord_1}. Then the minimal norms, $\dist(0,\partial \varphi(x))$ and $\dist(0,\partial \varphi_S(x))$, could serve as stationarity measures akin to the norm of the gradient in smooth minimization. One may then posit that the stationarity measures, $\dist(0,\partial \varphi(x))$ and $\dist(0,\partial \varphi_S(x))$, are uniformly close with high probability when the sample size is large. Pointwise convergence is indeed known to hold (e.g. \cite[Theorem 7.54]{stoch_prog}). On the other hand, to our best knowledge, all results on uniform convergence of the stationarity measure are asymptotic and require extra assumptions, such as polyhedrality for example \cite{ralph_xu}. The main obstacle is that the function $x\mapsto\dist(0,\partial \varphi(x))$ is highly discontinuous.
We refer the reader to \cite[pp. 380]{stoch_prog} for a discussion.
Indeed, the need to look beyond 
pointwise uniform convergence is well-documented in  optimization and variational analysis \cite{wets_book,attouch2}. One remedy is to instead focus on graphical convergence concepts. Namely, one could posit that the Hausdorff distance between the subdifferential graphs, $\gph \partial \varphi$ and $\gph \partial \varphi_S$, tends to zero.
Here, we take a closely related approach,  
while aiming for finite-sample bounds. 

In this work, we aim to provide tight threshold estimates beyond which  algorithms on \eqref{eq:empirical0} should terminate. In contrast to previous literature, however, we will allow the loss function to be both nonconvex and nonsmooth. The only serious assumption we make is that $f(\cdot,z)$ is a $\rho$-weakly convex function for a.e. $z\sim P$, by which we mean that the assignment $x \mapsto f(x, z) + \frac{\rho}{2}\|x\|^2_2$ is convex. The class of weakly convex functions is broad and its importance in optimization  is well documented \cite{fav_C2,prox_reg,Nurminskii1973,paraconvex,semiconcave}.\footnote{Weak convexity also goes by other names such as lower-$C^2$, uniformly prox-regular, and semiconvex.} It trivially includes all convex functions and all $C^1$-smooth functions with Lipschitz gradient. More broadly, it includes all  compositions $f(x,z)=h(c(x,z),z)$, where $ h(\cdot,z) $ is convex and Lipschitz, and $c(\cdot,z)$ is $C^1$-smooth with Lipschitz Jacobian. Robust principal component analysis,  phase retrieval,  blind deconvolution, sparse dictionary learning, and minimization of risk measures naturally lead to stochastic weakly convex minimization problems. We refer the interested reader to \cite[Section 2.1]{stochastic_subgrad} and \cite{prox_point_surv} for detailed examples. 

The approach we take is based on a smoothing technique, familiar to optimization specialists.
For any function $g$, define the Moreau envelope and the proximal map: 
$$g_{\lambda}(x):=\min_{y} \left\{g(y)+\frac{1}{2\lambda}\|y-x\|^2_2\right\},\qquad \prox_{\lambda g}(x):=\argmin_{y} \left\{g(y)+\frac{1}{2\lambda}\|y-x\|^2_2\right\}.$$
It is well-known that if $g$ is $\rho$-weakly convex and $\lambda<\frac{1}{\rho}$, then the envelope $g_\lambda$ is $C^1$-smooth with gradient
$$\nabla g_\lambda(x)=\lambda^{-1}(x-\prox_{\lambda g}(x)).$$
Note that $\nabla g_\lambda(x)$ is in principle computable by solving a  convex optimization problem in the definition of the proximal point $\prox_{\lambda g}(x)$. 

Our main result (Theorem~\ref{thm:prox_close_p}) shows that with probability $1-\gamma$, the estimate holds: 
\begin{equation}\label{eqn:main_result}
\sup_{x:\, \|x\|_2\leq R} ~\|\nabla \varphi_{1/2\rho}(x)-\nabla (\varphi_S)_{1/2\rho}(x)\|_2= {\mathcal{O}}\left(\sqrt{\frac{L^2 d}{m}\log\left(\frac{R\rho m}{\gamma}\right)}\right).
\end{equation}
The guarantee \eqref{eqn:main_result} is appealing: even though the subgradients of $\varphi$ and $\varphi_S$ may be far apart pointwise, the gradients of the smooth approximations $ \varphi_{1/2\rho}$ and $(\varphi_S)_{1/2\rho}$ are uniformly close at a controlled rate governed by the sample size. Moreover, \eqref{eqn:main_result} directly implies estimates on the Hausdorff distance between subdifferential graphs, $\gph\partial \varphi$ and $\gph\partial \varphi_S$, as we alluded to above. Indeed, the subdifferential graph is related to the graph of the proximal map by a linear isomorphism. The guarantee \eqref{eqn:main_result} is also perfectly in line with the recent progress on streaming algorithms \cite{prixm_guide_subgrad,stochastic_subgrad,model_paper_non_euclid,zhang_he}. These works showed that a variety of popular streaming algorithms (e.g. stochastic subgradient, Gauss-Newton, and proximal point) drive the gradient of the Moreau envelope to zero at a controlled rate. Consequently, the estimate \eqref{eqn:main_result} provides a 
tight threshold beyond which such streaming algorithms on the regularized ERM problem \eqref{eq:empirical0} should terminate. The proof we present of \eqref{eqn:main_result} uses only the most elementary techniques: stability of quadratically regularized ERM \cite{stocH-shal}, McDiarmid's inequality \cite{mcdiarmid}, and a covering argument.

It is intriguing to ask when the dimension dependence in the bound \eqref{eqn:main_result} can be avoided. For example, for certain types of losses (e.g. modeling a linear predictor) there are well-known dimension independent bounds on uniform convergence in function values. Can we therefore obtain dimension independent bounds in similar circumstances, but on the deviations $\|\nabla \varphi_{1/2\rho}-\nabla (\varphi_S)_{1/2\rho}\|_2$?
The main tool we use to address this question  is entirely deterministic. We will show that if $\varphi$ and $\varphi_S$ are uniformly $\delta$ close, then the gradients $\nabla \varphi_{1/2\rho}$ and $\nabla(\varphi_S)_{1/2\rho}$ are uniformly $O(\sqrt{\delta})$ close, as well as their subdifferential graphs in the Hausdorff distance.  We illustrate the use of such bounds with two examples.
As the first example, consider the loss $f$ modeling a generalized linear model:
$$f(x, z) = \ell(\dotp{x, \phi(z)}, z).$$ Here $\phi$ is some feature map and $\ell(\cdot, z)$ is a loss function. It is well-known that if  $\ell(\cdot, z)$ is Lipschitz, then the empirical function values $f_S(x)$ converge uniformly to the population values $f(x)$ at a dimension-independent rate that scales as $m^{-1/2}$ in the sample size.  We thus deduce 
that the gradient $\nabla (\varphi_S)_{1/2\rho}$ converges uniformly to $\nabla \varphi_{1/2\rho}$ at the rate $m^{-1/4}$.   We leave it as an intriguing open question whether this rate can be improved to $m^{-1/2}$.
The second example analyzes the landscape of a robust nonlinear regression problem, wherein we observe a series of nonlinear measurements $\sigma(\dotp{\bar x, z})$ of input data $\bar x$, possibly with adversarial corruption. Using the aforementioned techniques, we will show that under mild distributional assumptions on $z$, every stationary point of the associated nonsmooth nonconvex empirical risk is within a small ball around $\bar x$.
	
	Though, in the discussion above, we focused on the norm that is induced by an inner product, the techniques we present apply much more broadly to Bregman divergences. In particular, any Bregman divergence generates an associated conjugate regularization of the empirical and population risks, making our techniques applicable under non-euclidean geometries and under high order growth of the loss function. The outline of the paper is as follows. In Section~\ref{sec:prelim}, we introduce our notation as well as several key concepts including Bregman divergences and subdifferentials. In Section~\ref{sec:prob_sett}, we describe the problem setting. In Section~\ref{sec:dimension_dep}, we  obtain dimension dependent rates on the error between the gradients of the Moreau envelopes of the population and subsampled objectives. In Section~\ref{sec:dim_indep}, we illustrate the techniques of the previous section by obtaining dimension independent rates for generalized linear models and analyzing the landscape of a robust nonlinear regression problem.

	\subsection*{Related literature.}		
	
	This paper builds on the vast literature on sample average approximations
	found in the stochastic programming and statistical learning literature. The results in these communities are similar in many respects, but differ in their focus on convergence criteria. In the stochastic programming literature, much attention has been given to the convergence of (approximate) minimizers and optimal values both in the distributional and almost sure limiting sense~\cite{rockafellar_asymptotic_normality,shapiro_monte_carlo,geyer1994,shapiro2000_local_m_estimation,convergence_of_local_min,probability_metrics,Shapiro2000Stochastic,Kaniovski1995,stability_wets}. In contrast, the statistical learning community puts a greater emphasis on excess risk bounds that hold with high probability, often with minimal or no dependence on dimension~\cite{stocH-shal,grunwald2016fast,NIPS2010_3894,liu2018fast,mehta2016fast,Mehta:2014:SMF:2968826.2968960,stability_baquett,doi:10.1142/S0219530505000650,JMLR:v16:vanerven15a,Zinkevich:2003:OCP:3041838.3041955,NIPS2008_3400,kakade2009complexity}. 
	
	Several previous works have studied (sub)gradient based convergence, as we do here. For example,~\cite{XU2010692} proves nonasymptotic, dimension dependent high probability bounds on the distance between the empirical and population subdifferential under the Hausdorff metric. The main assumption in this work, however, essentially requires smoothness of the population objective. The work~\cite{Xu2009} takes a different approach, directly smoothing the empirical losses $f(x, z)$. They show that the limit of the gradients of a certain smoothing of the empirical risk converges to an element of the population subdifferential. No finite-sample bounds are developed in \cite{Xu2009}. The most general asymptotic convergence result that we are aware of is presented in~\cite{SHAPIRO20071390}. There, the authors show that with probability one, the limit of a certain enlarged subdifferential of the empirical loss converges to an enlarged subdifferential of the population risk under the Hausdorff metric. 
	
	The two works most closely related to this paper are more recent. The paper~\cite{landscape_grad_mont} proves high probability uniform convergence of gradients for smooth objectives under the assumption that the gradient $\nabla f(x,z)$ is sub-Gaussian with respect to the population data. The bounds presented in~\cite{landscape_grad_mont} are all dimension dependent and rely on covering arguments. The more recent paper~\cite{karthik_new}, on the other hand, provides dimension independent high probability uniform rates of convergence of gradients for smooth Lipschitz generalized linear models. The main technical tool developed~\cite{karthik_new} is a ``chain rule" for Rademacher complexity. We note that, in contrast to the $m^{-1/4}$ rates developed in this paper,~\cite{karthik_new} obtains rates of the form $m^{-1/2}$ for smooth generalized linear models.

\section{Preliminaries.}\label{sec:prelim}
Throughout, we follow standard notation from convex analysis, as set out for example by Rockafellar \cite{con_ter}.  
The symbol $\R^d$ will denote a $d$-dimensional Euclidean space with inner product $\langle\cdot,\cdot \rangle$ and the induced norm $\|x\|_2=\sqrt{\langle x, x\rangle}$. Given any other norm $\|\cdot\|$, the symbol $\|\cdot\|_*$ will denote the dual norm. For any set $Q\subset\R^d$, we let $\inter\, Q$ and $\cl Q$  stand for the interior and closure of $Q$, respectively. The symbol $\ri Q$ will refer to the interior of $Q$ relative to its affine hull.
The closed Euclidean unit ball and the unit simplex in $\R^d$ will be denoted by $\bf B$ and $\Delta$, respectively.
The effective domain of any function $f\colon\R^d\to\R\cup\{\infty\}$, denoted by $\dom f$,  consists of all points where $f$ is finite. The indicator function of any set $Q\subset\R^d$, denoted $\iota_{Q}$, is defined to be zero on $Q$ and $+\infty$ off it.

\subsection{Bregman divergence.}
In this work, we will use techniques based on the Bregman divergence, as is now standard in optimization and machine learning literature. For more details on the topic, we refer the reader to the expository articles \cite{bubeck2015convex,juditsky_NEMfirs,Teboulle2018}. 
To this end, henceforth, we fix a {\em Legendre function} $\Phi\colon\R^d\to\R\cup\{\infty\}$, meaning:
\begin{enumerate}
	\item (Convexity) $\Phi$ is proper, closed, and strictly convex.
	\item (Essential smoothness) The domain
	of $\Phi$ has nonempty interior,	$\Phi$ is differentiable on $\inter(\dom \Phi)$ and 
	for any sequence $\{x_k\}\subset \inter(\dom \Phi)$ converging to a boundary point of $\dom \Phi$, it must be the case that $\|\nabla \Phi(x_k)\|_2\to\infty$.
\end{enumerate}
Typical examples of Legendre functions are the squared Euclidean norm $\Phi(x)=\frac{1}{2}\|x\|^2_2$, convex  polynomials $\Phi(x)=\|x\|_2+\|x\|^r_2$, the Shannon entropy $\Phi(x)=\sum_{i=1}^d x_i\log(x_i)$ with $\dom \Phi=\R^d_+$, and the Burge function $\Phi(x)=-\sum_{i=1}^d\log(x_i)$ with $\dom \Phi= \R^d_{++}$. For more examples, see the articles \cite{AusTeb,Baus_bor_breg,Eck_breg,teb_breg_2} or the recent survey \cite{Teboulle2018}.  

The function $\Phi$ induces the {\em Bregman divergence} defined by
\[ D_\Phi (y,x) := \Phi(y) - \Phi(x) - \la \nabla \Phi (x), y-x \ra,\]
for all $x \in \inter(\dom \Phi),~y\in \dom \Phi$.  In the classical setting $\Phi(\cdot)=\tfrac{1}{2}\|\cdot\|^2_2$, the divergence reduces to the square deviation $D_\Phi (y,x)=\frac{1}{2}\|y-x\|^2_2$.
Notice that since $\Phi$ is strictly convex, equality $D_\Phi (y,x)=0$ holds for some $x,y\in \inter(\dom \Phi)$ if and only if $y=x$. Therefore, the quantity $D_\Phi (y,x)$ measures the proximity between the two points $x,y$.
The divergence typically is asymmetric in $x$ and $y$, and therefore we define the symmetrization 
\[ \bregsym(x, y):=D_\Phi (x,y)+D_\Phi (y,x).\]

A function $g$ is called {\em compatible} with $\Phi$ if  the inclusion $\ri(\dom g)\subset\inter(\dom \Phi)$ holds. Compatibility will be useful for guaranteeing that ``proximal points'' of $g$ induced by $\Phi$ lie in $\inter(\dom \Phi)$; see the forthcoming Theorem~\ref{thm:env_diff}.
Our focus will be primarily on those functions that can be convexified by adding a sufficiently large multiple of $\Phi$. Formally, we will say that a function $g\colon\R^d\to\R\cup\{\infty\}$ is {\em $\rho$-weakly convex relative to $\Phi$}, for some $\rho\in \R$, if the perturbed function $x\mapsto g(x)+\rho\Phi(x)$ is convex. Similarly $g$ is {\em $\alpha$-strongly convex relative to $\Phi$}  if the function $x\mapsto g(x)-\alpha\Phi(x)$ is convex. The notions of relative weak/strong convexity are closely related to the generalized descent lemma introduced in \cite{descentBBT} and the recent  work on relative smoothness in \cite{rel_smooth_freund,Lu_mirror_weird}. We postpone discussing examples of weakly convex functions to section~\ref{sec:prob_sett}.

\subsection{Subdifferentials.}
First-order optimality conditions for nonsmooth and nonconvex problems are often most succinctly stated using subdifferentials.
The {\em subdifferential} of a function $g$ at a point $x\in \dom g$ is denoted by $\partial g(x)$ and consists of all vectors $v\in\R^d$ satisfying
$$g(y)\geq g(x)+\langle v,y-x\rangle+o(\|y-x\|)\qquad \textrm{as }y\to x.$$
When $g$ is differentiable at $x$, the subdifferential reduces to the singleton $\partial g(x)=\{\nabla g(x)\}$, while for convex functions it reduces to the subdifferential in the sense of convex analysis.
We will call a point $x$ {\em critical} for $g$ if the inclusion $0\in \partial g(x)$ holds.

When $g$ is $\rho$-weakly convex relative to $\Phi$, the subdifferential automatically satisfies the seemingly stronger property \cite[Lemma 2.2]{model_paper_non_euclid}:
\begin{equation}\label{eqn:lower_bound_breg}
	g(y)\geq g(x)+\langle v,y-x\rangle-\rho D_{\Phi}(y,x).
\end{equation}
for any $x\in \inter(\dom \Phi),y\in \dom \Phi$, and any $v\in\partial g(x)$. It is often convenient to interpret the assignment $x\mapsto \partial g(x)$ as a set-valued map, and as such, it has a graph defined by 
$$\gph\partial g(x):=\{(x,v)\in \R^d\times\R^d: v\in \partial g(x)\}.$$

\section{Problem setting.}\label{sec:prob_sett}
Fix a probability space $(\Omega, \mathcal{F}, P)$.
In this paper, we focus on the optimization problem 
\begin{align}\label{eq:population_sect}
	\min_{x\in \R^d} \; \varphi(x)=f(x) +r(x)\qquad \textrm{ where }\qquad  f(x)=\EE_{z \sim P}\left[ f(x, z) \right],
\end{align}
under the following assumptions on the functional components: 
\begin{enumerate}
	\item[(A1)] (\textbf{Bregman Divergence}) The Legendre function $\Phi\colon\R^d\to\R\cup\{\infty\}$ satisfies the compatibility condition, $\ri(\dom r)\subset \inter(\dom \Phi).$
	Moreover $\Phi$ is $\alpha$-strongly convex with respect to some norm $\|\cdot \|$ on the set $\dom r\cap \inter(\dom \Phi)$, meaning: 
	$$\frac{\alpha}{2}\|y-x\|^2\leq D_{\Phi}(x,y)\qquad \textrm{ for all }x,y\in \dom r\cap \inter(\dom \Phi).$$
	
	\item[(A2)] (\textbf{Weak Convexity}) The functions $f(\cdot,z)+r(\cdot)$ are $\rho$-weakly convex relative to $\Phi$, for a.e. $z\in \Omega$, and are bounded from below.
	\item[(A3)] (\textbf{Lipschitzian Property}) There exists a square integrable function $L\colon \Omega \rightarrow \RR_+$ such that for all $x, y \in \inter(\dom \Phi) \cap \dom r$ and $z \in \Omega$, we have
	\begin{align*}
		|f(x, z) - f(y, z)| &\leq L(z)\sqrt{\bregsym(x, y)},\\
		\sqrt{\EE\left[ L(z)^2 \right]}&\leq \sigma.
	\end{align*}
	
\end{enumerate}

The stochastic optimization problem \eqref{eq:population_sect} is the standard task of minimizing the regularized population risk. The function $f(x,z)$ is called the loss, while  $r\colon\R^d\to\R\cup\{\infty\}$ is a structure promoting regularizer. Alternatively $r$ can encode feasibility constraints as an indicator function. Assumptions $(A1)$ and $(A2)$ are self-explanatory. Assumption $(A3)$ asserts control on the variation in the loss $f(\cdot,z)$. In the simplest setting when $\Phi=\frac{1}{2}\|\cdot\|^2_2$, Assumption $(A3)$ simply amounts to Lipschitz continuity of the loss $f(\cdot,z)$ on $\dom r$ with a square integrable Lipschitz constant $L(z)$. The use of the Bregman divergence allows much more flexibility, as highlighted in the recent works \cite{descentBBT,rel_smooth_freund,Lu_mirror_weird,model_paper_non_euclid}. On one hand, it allows one to consider losses that are Lipschitz in a non-Euclidean norm, as long as $\Phi$ is strongly convex in that norm---a standard technique in machine learning and optimization. On the other hand, the Bregman divergence could also accommodate losses that are only locally Lipschitz continuous. The following two examples illustrate these two settings.

\begin{example}[Non-Euclidean geometry: $\ell_1$-setup]\label{ex:non_euclid}{\rm 
	In this example, we consider the instance of \eqref{eq:population_sect} that is adapted to the $\ell_1$-norm $\|\cdot\|_1$. To this end, define the Legendre function $\Phi(x)=\frac{e^2}{2(p-1)}\|x\|^2_p$ with $p=1+\frac{1}{\ln d}$. Then $\Phi$ is strongly convex with respect to the $\ell_1$-norm with constant $\alpha:=1$; see e.g. \cite[Lemma 17]{shaithesis}, \cite[eqn. 5.5]{complexity}, \cite[Corollary 2.2]{nest_men_acta}.  Then  $(A2)$  holds as long as the functions $g_z(y):=f(y,z)+r(y)$ satisfy the estimate \cite[Lemma 2.2]{model_paper_non_euclid}
	$$g_z(y)\geq g_z(x)+\langle v,y-x\rangle -\frac{\rho }{2}\|y-x\|^2_1,$$
	for all $x,y\in \dom g$, $v\in \partial g_z(x)$, and a.e. $z\in \Omega$.   Property $(A3)$, in turn, holds as long as the functions $f(\cdot,z)$ are Lipschitz continuous in the $\ell_1$-norm, with a square integrable Lipschitz constant $L(z)$.}
\end{example}
\smallskip

\begin{example}[High-order growth]
	{\rm
	As the second example, we consider the setting where the losses grow faster than linear. Namely, suppose that the Lipschitz constant of the loss on bounded sets is polynomially controlled:
	\begin{align*}
		\frac{f(x,z) - f(y, z)}{\|x-y\|_2} \leq  L(z) \sqrt{\frac{p(\|x\|_2)+p(\|y\|_2)}{2}} \qquad \textrm{ for all distinct } x,y\in\R^d,z\in \Omega,
	\end{align*}
	where $p(u)=\sum_{i=0}^r a_i u^i$ is some degree $r$ univariate polynomial  
	with nonnegative coefficients and $L(\cdot)$ is square integrable. Then the result \cite[Proposition 3.2]{model_paper_non_euclid} shows that (A3) holds for the Legendre function 
	\begin{equation}
		\Phi(x) := \sum_{i=0}^r a_i\left(\frac{3i + 7}{i+2} \right)\|x\|_2^{i+2}.
	\end{equation}}
\end{example}

We could in principle state and prove all the results of the paper in the Euclidean setting $\Phi=\frac{1}{2}\|\cdot\|^2_2$. On the other hand, the use of the Bregman divergence adds no complications whatsoever, and therefore we work in this greater generality. The reader should keep in mind, however, that all the results are of interest even in the Euclidean setting.

\subsection{Convex compositions.}
The most important example of the problem class \eqref{eq:population_sect} corresponds to the setting when $r(\cdot)$ is convex and the loss has the form:
$$f(x,z)=h(c(x,z),z),$$
where $h(\cdot,z)$ is convex and $c(\cdot,z)$ is $C^1$-smooth. To see this, let us first consider the setting $\Phi=\frac{1}{2}\|\cdot\|^2_2$. Then provided that $h(\cdot,z)$ is $\ell$-Lipschitz and the Jacobian $\nabla c(\cdot,z)$ is $\beta$-Lipschitz, a quick argument \cite[Lemma 4.2]{comp_DP} shows that the loss $f(\cdot,z)$ is $\ell\beta$-weakly convex relative to $\Phi$; therefore (A2) holds with $\rho=\ell\beta$. Moreover, if there exists a square integrable function $M(\cdot)$ satisfying $\|\nabla c(x,z)\|_{\rm op}\leq M(z)$ for all $x\in \dom r$ and $z\in \Omega$, then (A3) holds with $L(z)=\ell M(z)$.

The assumption that $h(\cdot,z)$ is Lipschitz continuous is mild and is often true in applications. On the other hand, Lipschitz continuity and boundedness of $\nabla c(\cdot,z)$ are strong assumptions. They can be relaxed by switching to a different Bregman divergence. Indeed, suppose that  $h(\cdot,z)$ is convex and $\ell$-Lipschitz as before, while $c(\cdot,z)$ now satisfies the two growth properties:
\begin{align*}
	\frac{\|\nabla c(x,z) - \nabla c(y,z) \|_{{\rm op}}}{\|x - y\|_2} &\leq p(\|x\|_2) + p(\|y\|_2)\qquad \forall x\neq y, \forall z\in \Omega\\
	\|\nabla c(x,z)\|_{\rm op} &\leq L_1(z) \cdot \sqrt{q(\|x\|_2)}\qquad \forall x, \forall z\in\Omega,
\end{align*}
for some polynomials $p(u)=\sum_{i=}^r a_i u^i$ and $q(u)=\sum_{i=}^r b_i u^i$, with $a_i,b_i\geq 0$, and some square integrable function $L_1(\cdot)$. Define the Legendre function
$$\Phi(x)=\sum_{i = 0}^r \frac{b_i+a_i(3i + 7)}{i+2}\|x\|_2^{i+2}.$$
Then the result \cite[Propositions 3.4, 3.5]{model_paper_non_euclid} shows that assumption (A2) and (A3) hold with $\rho=\ell$ and $L(z)=\sqrt{2}\ell L_1(z)$.  

The class of composite problems is broad and has attracted some attention lately \cite{noncon_rob_rec,weirdn2,duchi_ruan,davis2017nonsmooth,comp_DP,duchi_ruan_PR,stochastic_subgrad,zhang_he,model_paper_non_euclid,prixm_guide_subgrad} as  an appealing setting for nonsmooth and nonconvex optimization. Table~\ref{table:tab1} summarizes a few interesting problems of this type; details can be found in the aforementioned works.

\begin{table}[h!]
	\begin{center}
		\begin{tabular}{  | l | c | c | }
			\hline
			{\bf Problem} & {\bf Loss function} & {\bf Regularizer} \\ \hline\hline
			Phase retrieval & $f(x,(a,b))=|\langle a,x\rangle^2-b|$ & $r(x)=0,\|x\|_1$ \\ \hline
			Blind deconvolution & $f((x,y),(u,v,b))=| \langle u,x\rangle \langle v, y\rangle-b|$ & --- \\  
			\hline
			Covariance estimation & $f(x,(U,b))=| \|Ux\|^2-b|$ & --- \\  
			\hline
			Censored block model & $f(x,(ij,b))=|x_ix_j-b|$ & --- \\  
			\hline
			Conditional Value-at-Risk & $f((x,\gamma),z)=(\ell(x,z)-\gamma)^+$ & $r(x,\gamma)= (1-\alpha)\gamma$ \\  
			\hline
			Trimmed estimation & $f((x,w),i)=w_if_i(x)$ & $r(x,w)=\iota_{[0,1]^n\cap k\Delta}(w)$ \\  
			\hline
			Robust PCA & $f((U,V),(ij,b))=|\langle u_i,v_j\rangle-b|$ & ---\\  
			\hline
			Sparse dictionary learning & $f((D,x),z)=\|z-Dx\|_2$ & $r(D,x)=\iota_{\bf B}(D)+\lambda\|x\|_1$\\  
			\hline		
		\end{tabular}
		\caption{Common stochastic weakly convex optimization problems.}
		\label{table:tab1}
	\end{center}
\end{table}

\subsection{The stationarity measure and implicit smoothing.}
Since the problem \eqref{eq:population_sect} is nonconvex and nonsmooth, typical algorithms can only be guaranteed to find critical points of the problem, meaning those satisfying $0\in \partial \varphi(x)$. Therefore, one of our main goals is to estimate the Hausdorff distance between the subdifferential graphs, $\gph \partial \varphi$ and $\gph \partial \varphi_S$. We employ an indirect strategy based on a smoothing technique. 

Setting the formalism, for any function $g\colon\R^d\to\R\cup\{\infty\}$, we define the {\em $\Phi$-envelope} and the {\em $\Phi$-proximal map}:
\begin{align*}
	g^\Phi_{\lambda}(x)&:=\inf_{y}\, \left\{g(y)+\frac{1}{\lambda}D_{\Phi}(y,x)\right\},\qquad
	\prox_{\lambda g}^\Phi(x):=\argmin_{y}\, \left\{g(y)+\frac{1}{\lambda}D_{\Phi}(y,x)\right\},
\end{align*}
respectively. 
Note that in the Euclidean setting $\Phi=\frac{1}{2}\|\cdot\|^2_2$, these two constructions reduce to the widely used Moreau envelope and the proximity map introduced in \cite{MR0201952}. In this case, we will omit the subscript $\Phi$ from $g^\Phi_{\lambda}$ and $\prox_{\lambda g}^\Phi$. Note that in the Euclidean setting, the graphs of the proximal map and the subdifferential are closely related:
$$y=\prox_{\lambda g}(x)\quad\Longleftrightarrow\quad (y, \lambda^{-1}(x-y))\in \gph\partial g.$$
Consequently the graph of the proximal map $\prox_{\lambda g}$ is linearly isomorphic to the graph of the subdifferential $\partial g$ by the linear map $(x,y)\mapsto (y,\lambda^{-1}(y-x))$. It is this observation that will allow us to pass from uniform estimates on the deviations $\| \prox_{\varphi/\lambda}(x)-\prox_{\varphi_S/\lambda}(x)\|$ to estimates on the Hausdorff distance between subdifferential graphs, $\gph \partial \varphi$ and $\gph \partial \varphi_S$.

It will be important to know when the set $\prox_{\lambda g}^\Phi(x)$ is a singleton lying in $\inter(\dom \Phi)$.  
The following theorem follows quickly from \cite{breg_baus}, with a self-contained argument given in \cite[Theorem 4.1]{model_paper_non_euclid}. The Euclidean version of the result was already proved in \cite{MR0201952}. We will often appeal to this theorem without explicitly referencing it to shorten the exposition.

\begin{thm}[Smoothness of the $\Phi$-envelope] \label{thm:env_diff}
	Consider a closed and lower-bounded function $g\colon\R^d\to\R\cup\{\infty\}$ that is $\rho$-weakly convex and compatible with some Legendre function $\Phi\colon\R^d\to\{\infty\}$. Then the following are true for any positive $\lambda< \rho^{-1}$ and any $x\in \inter(\dom \Phi)$. 
	\begin{itemize}
		\item The proximal point $\prox_{\lambda g}^\Phi(x)$ is uniquely defined and lies in $\dom g\cap \inter(\dom \Phi)$.
		\item If $\Phi$ is twice differentiable on the interior of its domain, then 
		the envelope $g_\lambda^\Phi$ is differentiable at $x$ with gradient given by
		\begin{equation}\label{eqn:grad_eqn}
			\nabla g^\Phi_\lambda (x) := \frac{1}{\lambda} \nabla^2 \Phi (x) \left(x - \prox_{\lambda g}^\Phi (x)\right).
		\end{equation}
	\end{itemize}
\end{thm}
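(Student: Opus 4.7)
The plan is to establish the two bullet points in sequence, relying on two essentially independent ingredients: strict convexity of the proximal objective for the first, and an envelope-type argument for the second.

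For the first bullet point, I would rewrite the proximal objective as
\begin{equation*}
F_x(y) := g(y) + \frac{1}{\lambda} D_\Phi(y,x) = \bigl[g(y) + \rho\Phi(y)\bigr] + \Bigl(\frac{1}{\lambda}-\rho\Bigr) \Phi(y) - \frac{1}{\lambda}\langle \nabla\Phi(x), y\rangle + C(x),
\end{equation*}
where $C(x)$ collects the terms independent of $y$. The first bracket is convex since $g$ is $\rho$-weakly convex relative to $\Phi$, and the coefficient $\frac{1}{\lambda}-\rho$ is strictly positive because $\lambda<\rho^{-1}$, so $F_x$ is strictly convex; hence it has at most one minimizer. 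Existence follows from lower boundedness of $g$ together with the coercive behavior of $\Phi$ on its domain and the compatibility assumption $\ri(\dom g) \subset \inter(\dom \Phi)$. The crucial interior property $y^* := \prox_{\lambda g}^\Phi(x) \in \inter(\dom\Phi)$ is where essential smoothness of $\Phi$ enters: any candidate minimizer on the boundary of $\dom \Phi$ cannot satisfy the Fermat rule, since for Legendre $\Phi$ the subdifferential $\partial \Phi$ is empty on that boundary.

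For the second bullet point, fix $x\in\inter(\dom\Phi)$ and let $y^* = y^*(x)$ denote the unique proximal point. A direct computation of the partial gradient of $D_\Phi$ in its second argument (with $y$ held fixed) yields $\nabla_x D_\Phi(y,x) = \nabla^2\Phi(x)(x-y)$. An envelope-type argument then identifies
\begin{equation*}
\nabla g_\lambda^\Phi(x) = \frac{1}{\lambda}\nabla_x D_\Phi(y^*,x) = \frac{1}{\lambda}\nabla^2\Phi(x)\bigl(x - y^*\bigr),
\end{equation*}
which is exactly the formula \eqref{eqn:grad_eqn}.

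The main obstacle will be justifying the envelope step rigorously, since Danskin's theorem does not apply off the shelf: $g$ is nonconvex, nonsmooth, and the minimization is over an unbounded set. The standard workaround is a squeeze argument. An upper bound on $g_\lambda^\Phi(x+h) - g_\lambda^\Phi(x)$ is obtained by substituting the old minimizer $y^*(x)$ into the perturbed problem and Taylor-expanding $D_\Phi(y^*(x),\cdot)$ at $x$; a matching lower bound uses the relative weak convexity inequality \eqref{eqn:lower_bound_breg} together with first-order optimality at $y^*(x+h)$. Combining these with local continuity of $x\mapsto y^*(x)$---itself a consequence of strict convexity of $F_x$ and a standard monotonicity argument---and sending $h\to 0$ gives the stated gradient and simultaneously its continuity. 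This is precisely the self-contained argument executed in \cite[Theorem 4.1]{model_paper_non_euclid}, which I would follow rather than reinvent.
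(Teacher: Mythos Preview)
Your proposal is correct and, in fact, goes beyond what the paper does: the paper does not prove this theorem at all, but simply cites \cite{breg_baus} and notes that a self-contained argument appears in \cite[Theorem 4.1]{model_paper_non_euclid}. Your sketch follows exactly that latter reference, so you are aligned with the paper's intent.
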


The main application of Theorem~\ref{thm:env_diff} is to the functions $\varphi$ and $\varphi_S$ under the assumptions (A1)-(A3). Looking at the expression \eqref{eqn:grad_eqn}, it is clear that the deviation between $x$ and $\prox_{\lambda g}^\Phi(x)$ provides an estimate on the size of the gradient $\nabla g^\Phi_{\lambda}(x)$.
To make this observation precise, for any point $x\in \inter(\dom \Phi)$, define the primal-dual pair of local norms:
$$\|y\|_x:=\left\|\nabla^2 \Phi(x)y\right\|_*,\qquad \|v\|_x^*=\left\|\nabla^2 \Phi(x)^{-1}v\right\|.$$
Thus for any positive $\lambda< \rho^{-1}$ and $x\in \inter(\dom \Phi)$, using \eqref{eqn:grad_eqn}, we obtain the estimate
$$\sqrt{D_{\Phi}\left(\prox_{\lambda g}^\Phi (x),x\right)}\geq \lambda\sqrt{\frac{\alpha}{2}}\cdot\|\nabla g^\Phi_\lambda (x)\|^*_x.$$
Consequently, following the recent literature on the subject, we will treat the quantity
\begin{equation}\label{eqn:breg_dist}
	D_{\Phi}(\prox_{\lambda g}^\Phi(x),x),
\end{equation}
as a measure of approximate stationarity of $g$ at $x$. In particular, when specializing to the target setting $g=\varphi$, it is this quantity \eqref{eqn:breg_dist} that streaming algorithms drive to zero at a controlled rate, as shown in \cite{model_paper_non_euclid,stochastic_subgrad,zhang_he}.

\section{Dimension Dependent Rates.}\label{sec:dimension_dep}
In this section, we prove the uniform convergence bound~\eqref{eqn:main_result}, appropriately generalized using the Bregman divergence. The proof outline is as follows. First, in Theorem~\ref{thm:stab_ERM} we will estimate the expected error between the true proximal point and an empirical sample, 
$$\EE_S\|\prox^{\Phi}_{\varphi/\lambda}(y)-\prox^{\Phi}_{\varphi_S/\lambda}(y)\|,$$
where $y$ is fixed. The same theorem also shows $\prox^{\Phi}_{\varphi_S/\lambda}(y)$ is stable in the sense that  $\prox^{\Phi}_{\varphi_S/\lambda}(y)$ does not vary much when one sample $z_i\in S$ is changed arbitrarily. In the Euclidean setting, this is precisely the main result of \cite{stocH-shal} on stability of quadratically regularized ERM in stochastic convex optimization. Using McDiarmid's inequality \cite{mcdiarmid} in Theorem~\ref{thm:prox_close_p}, we will then  deduce that the quantity $\|\prox^{\Phi}_{\varphi/\lambda}(y)-\prox^{\Phi}_{\varphi_S/\lambda}(y)\|$ concentrates around its mean for a fixed $y$. A covering argument over the points $y$ in an appropriate metric will then complete the proof.

We begin following the outlined strategy with the following theorem. 

\begin{thm}[Stability of regularized ERM]\label{thm:stab_ERM} Consider a set $S=(z_1,\ldots,z_m)$ and define $S^i := (z_1, \ldots, z_{i-1}, z_i', z_{i+1}, \ldots z_m)$, where both the index $i$ and the point $z_i' \in \Omega$ are arbitrary. Fix an arbitrary point $y\in \inter(\dom \Phi)$ and a real $\bar \rho>\rho$, and set 
	\begin{align*}
	\alg^* := \argmin_{y \in \domainspace}\,\left\{\varphi(y)+\bar\rho D_{\Phi}(x,y)\right\} && \text{ and } &&  \alg(S) := \argmin_{y \in \domainspace} \,\left\{\varphi_S(y)+\bar\rho D_{\Phi}(x,y)\right\}.
	\end{align*}
	Then  the estimates hold:
	\begin{align}
	\sqrt{\bregsym(\alg(S),\alg(S^i))}&\leq \frac{L(z_i)+L(z_i')}{(\bar\rho-\rho)m}\label{eqn:stab_guarant}\\
	\EE_S \left[D_{\Phi}(\alg(S),\alg^*)\right]&\leq \frac{2\sigma^2}{(\bar\rho-\rho)^2 m}\label{eqn:comparsiontoopt}\\
	0\leq\EE_S[\varphi^{\Phi}_{1/\bar\rho}(y)-(\varphi^{\Phi}_S)_{1/\bar\rho}(y)] &\leq \frac{2\sigma^2}{(\bar\rho-\rho)m}.\label{conv_est_phi}
	\end{align}
\end{thm}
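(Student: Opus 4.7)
My plan is to introduce the shorthand $H(u) := \varphi(u) + \bar\rho D_\Phi(u, y)$ and $H_S(u) := \varphi_S(u) + \bar\rho D_\Phi(u, y)$, so that $\alg^{*} = \argmin H$ and $\alg(S) = \argmin H_S$ (with an analogous identity for $\alg(S^{i})$). Since $\varphi$ and $\varphi_S$ are $\rho$-weakly convex relative to $\Phi$ by (A2), while $u \mapsto D_\Phi(u, y)$ is $1$-strongly convex relative to $\Phi$, each of $H$, $H_S$, and $H_{S^{i}}$ is $(\bar\rho - \rho)$-strongly convex relative to $\Phi$. In particular, the first-order minimizer inequality (derived from \eqref{eqn:lower_bound_breg} applied to $g - (\bar\rho-\rho)\Phi$) gives
\[
H_S(u) - H_S(\alg(S)) \;\geq\; (\bar\rho - \rho)\, D_\Phi(u, \alg(S)) \qquad \text{for all } u \in \dom r,
\]
and analogous inequalities for $H$ and $H_{S^{i}}$.

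\textbf{Proof of (1).} I apply the displayed inequality to $H_S$ at $u = \alg(S^{i})$ and, symmetrically, to $H_{S^{i}}$ at $u = \alg(S)$, then add. The right-hand sides combine into $(\bar\rho - \rho)\bregsym(\alg(S), \alg(S^{i}))$, while the values $H_S(\alg(S))$ and $H_{S^{i}}(\alg(S^{i}))$ cancel in pairs on the left. Using $H_S - H_{S^{i}} = \tfrac{1}{m}[f(\cdot, z_i) - f(\cdot, z_i')]$, the remaining left-hand side is
\[
\tfrac{1}{m}\bigl[(f(\alg(S^{i}), z_i) - f(\alg(S), z_i)) + (f(\alg(S), z_i') - f(\alg(S^{i}), z_i'))\bigr],
\]
which (A3) bounds in absolute value by $\tfrac{L(z_i) + L(z_i')}{m}\sqrt{\bregsym(\alg(S), \alg(S^{i}))}$. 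Dividing through yields (1).

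\textbf{Proofs of (3) and (2).} The lower bound in (3) is immediate since $H_S(\alg(S)) \leq H_S(\alg^{*})$ pointwise and $\EE_S H_S(\alg^{*}) = H(\alg^{*})$ by linearity of expectation. For the upper bound, I use the sandwich $H(\alg^{*}) \leq H(\alg(S)) = H_S(\alg(S)) + [f(\alg(S)) - f_S(\alg(S))]$, which reduces matters to controlling the generalization gap $\EE_S[f(\alg(S)) - f_S(\alg(S))]$. The Bousquet--Elisseeff resampling identity, obtained by renaming $z_i \leftrightarrow z_i'$ under the i.i.d.\ product measure, supplies
\[
\EE_S[f(\alg(S)) - f_S(\alg(S))] \;=\; \frac{1}{m}\sum_{i=1}^{m} \EE_{S, z_i'}\bigl[f(\alg(S^{i}), z_i) - f(\alg(S), z_i)\bigr].
\]
Each summand is controlled by (A3) as $L(z_i)\sqrt{\bregsym(\alg(S^{i}), \alg(S))}$; invoking (1) and then Cauchy--Schwarz together with $\EE L(z)^2 \leq \sigma^2$ yields a bound of $\tfrac{2\sigma^2}{(\bar\rho - \rho)m}$ on each term, which proves (3). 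For (2), the strong convexity of $H$ at its minimizer gives $(\bar\rho - \rho) D_\Phi(\alg(S), \alg^{*}) \leq H(\alg(S)) - H(\alg^{*})$; rewriting the right-hand side as $[H_S(\alg(S)) - H(\alg^{*})] + [f(\alg(S)) - f_S(\alg(S))]$ and taking expectations, the first bracket is nonpositive by the lower bound in (3), and the second was just bounded. Dividing by $(\bar\rho - \rho)$ gives (2).

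\textbf{Main obstacle.} The only nontrivial conceptual step is the resampling identity: one must verify that for each fixed $i$, the joint distribution of $(S, z_i')$ is invariant under the swap $z_i \leftrightarrow z_i'$, which converts the on-sample quantity $\EE_S f(\alg(S), z_i)$ into the stability-controlled perturbation $\EE_{S, z_i'} f(\alg(S^{i}), z_i)$. Once this is in place, the rest is a sequence of applications of strong convexity relative to $\Phi$, the Lipschitz estimate (A3), and Cauchy--Schwarz to handle the cross term $\EE[L(z) L(z')]$.
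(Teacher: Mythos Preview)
Your proof is correct and follows essentially the same approach as the paper's: the same strong-convexity-at-the-minimizer inequalities are combined to get the stability bound \eqref{eqn:stab_guarant}, the same Bousquet--Elisseeff resampling identity controls the generalization gap for \eqref{conv_est_phi}, and the same decomposition via strong convexity of $H$ yields \eqref{eqn:comparsiontoopt}. The only cosmetic differences are that the paper chains the two strong-convexity inequalities rather than adding them, and applies Cauchy--Schwarz to $L(z)$ and $\sqrt{\bregsym}$ before invoking \eqref{eqn:stab_guarant} rather than after.
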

\begin{proof}
	We first verify~\eqref{eqn:stab_guarant}. A quick computation yields for any points $u$ and $v$ the equation:
	\begin{equation}\label{eqn:rewrite_eqn}
	f_S(v)-f_S(u)=f_{S^{i}}(v)-f_{S^{i}}(u)+\frac{f(v,z_i)-f(u,z_i)}{m}+\frac{f(u,z_i')-f(v,z_i')}{m}.
	\end{equation}
	Define now the regularized functions
	$$\widehat{\varphi}(x) := \varphi(x) + \bar\rho D_{\Phi}(x,y) \qquad \text{ and } \qquad \widehat{\varphi}_{S}(x) := \varphi_S(x) + \bar\rho D_{\Phi}(x,y).$$
	Then adding $[r(v)+\bar\rho D_{\Phi}(v,y)]-[r(u)+\bar\rho D_{\Phi}(u,y)]$ to both sides of \eqref{eqn:rewrite_eqn}, we obtain
	\begin{align*}
	\widehat{\varphi}_S(v)-\widehat{\varphi}_S(u)=\widehat{\varphi}_{S^i}(v)-\widehat{\varphi}_{S^i}(u)+\frac{f(v,z_i)-f(u,z_i)}{m}+\frac{f(u,z_i')-f(v,z_i')}{m}.
	\end{align*}
	Henceforth, set $v:=\alg(S^i)$ and $u:= \alg(S).$
	Thus $v$ is the minimizer of $\widehat{\varphi}_{S^i}$ and $u$ is the minimizer of $\widehat{\varphi}_{S}$.
	Taking into account that  $\widehat{\varphi}_S(\cdot)$  and $\widehat{\varphi}_{S^i}(\cdot)$ are $(\bar\rho-\rho)$-strongly convex relative to $\Phi$, we deduce
	$$(\bar\rho-\rho)D_{\Phi}(v,u)\leq \widehat{\varphi}_S(v)-\widehat{\varphi}_S(u)\leq \frac{f(v,z_i)-f(u,z_i)}{m}+\frac{f(u,z_i')-f(v,z_i')}{m}-(\bar\rho-\rho)D_{\Phi}(u,v).$$
	Rearranging, we arrive at the estimate
	\begin{align*}
	\bregsym(u,v)&\leq \frac{1}{\bar\rho-\rho}\left[\frac{f(v,z_i)-f(u,z_i)}{m}+\frac{f(u,z_i')-f(v,z_i')}{m}\right]\leq \frac{L(z_i)+L(z_i')}{(\bar\rho-\rho)m}\sqrt{\bregsym(u,v)}.
	\end{align*}
	Dividing through by $\sqrt{\bregsym(u,v)}$, we obtain the claimed stability guarantee \eqref{eqn:stab_guarant}.
	
	To establish \eqref{conv_est_phi}, observe first 
	$$(\varphi_S)_{1/\bar\rho}(y)=\varphi_S(\alg(S))+\bar\rho D_{\Phi}(\alg(S),y)\leq \varphi_S(x)+\bar\rho D_{\Phi}(x,y)\qquad \textrm{for all }x.$$ Taking expectations, we conclude $\EE_S[(\varphi_S)_{1/\bar\rho}(y)]\leq  \varphi_{1/\bar\rho}(y)$, which is precisely the left-hand-side of \eqref{conv_est_phi}. Next, it is standard to verify the expression \cite[Theorem 13.2]{shalev2014understanding}:
	\begin{equation}
	\begin{aligned}
	\EE_S[f(A(S))]&=\EE_S[f_S(A(S))]+\EE_S[f(A(S))-f_S(A(S))]\\
	&= \EE_S[f_S(A(S))]+\EE_{(S,z')\sim P,i\sim U(m)} [f(A(S^i),z_i)-f(A(S),z_i)],
	\end{aligned}
	\end{equation}
	where $U(m)$ denotes the discrete uniform distribution.
	Taking into account \eqref{eqn:stab_guarant}, we obtain 
	\begin{align}
	\left|\EE_S[\widehat\varphi(A(S))-\widehat\varphi_S(A(S))]\right|&\leq \EE\left[L (z)\sqrt{\bregsym(\alg(S),\alg(S^i))}\right]\notag\\
	&\leq \sqrt{\EE_{z}[L (z)^2]}\sqrt{\EE_{S}[\bregsym(\alg(S),\alg(S^i)]}\leq \frac{2\sigma^2}{(\bar\rho-\rho)m}.\label{eqn:key_eqn_abs}
	\end{align}
	Noting $\widehat\varphi(A(S))\geq \varphi^{\Phi}_{1/\bar\rho}(y)$ and $(\varphi^{\Phi}_S)_{1/\bar\rho}(y)=\widehat\varphi_S(A(S))$  yields the right-hand-side of \eqref{conv_est_phi}.
	
	Finally taking into account that $\widehat{\varphi}$ is $(\bar\rho-\rho)$-strongly convex relative to $\Phi$, we deduce 
	$$(\bar\rho-\rho)D_{\Phi}(\alg(S),\alg^*)\leq \widehat{\varphi}(\alg(S))-\min \widehat{\varphi}   .$$
	Taking expectation, and using the inequalities $\EE_S[\widehat\varphi_S(A(S))]\leq \min  \widehat\varphi$ and \eqref{eqn:key_eqn_abs},  we arrive at 
	$$(\bar\rho-\rho)\EE_S[D_{\Phi}(\alg(S),\alg^*)]\leq \EE_S[\widehat{\varphi}(\alg(S))-\min \widehat{\varphi}]\leq \EE_S[\widehat{\varphi}(\alg(S))- \widehat{\varphi}_S(\alg(S))] \leq\frac{2\sigma^2}{(\bar\rho-\rho)m}.$$
	Thus we have established \eqref{eqn:comparsiontoopt}, and the proof is complete. 
\end{proof}

Next, we pass to high probability bounds on the deviation 
$\|\prox_{\varphi/\lambda}(y)-\prox_{\varphi_S/\lambda}(y)\|$
by means of McDiarmid's inequality \cite{mcdiarmid}. The basic result reads as follows. Suppose that a function $g$ satisfies the bounded difference property: \begin{align*}
|g(z_1,\ldots,z_{i-1},z_i,z_{i+1},\ldots,z_m) - g(z_1,\ldots,z_{i-1},z_i',z_{i+1},\ldots,z_m)| \leq  c_i,\end{align*}
for all $i, z_1,\ldots,z_{i-1},z_i,z_{i+1},\ldots,z_m,z_i'$,
where $c_i$ are some constants. Then for any independent random variables $x_1,\ldots, x_m$, the random variable $Y=g(x_1,\ldots, x_m)$ satisfies:
$$\mathbb{P}(Y-\EE Y\geq t)\leq \exp\left(\frac{-2t^2}{\|c\|_2^2}\right).$$
A direct application of this inequality to $\|\prox^{\Phi}_{\varphi/\lambda}(y)-\prox^{\Phi}_{\varphi_S/\lambda}(y)\|$ using \eqref{eqn:stab_guarant} would require the Lipschitz constant $L(\cdot)$ to be globally bounded. This could be a strong assumption, as it essentially requires the population data to be bounded. We will circumvent this difficulty by extending the McDiarmid's inequality to the setting where the constants $c_i$ are replaced by some functions $\omega_i(\cdot,\cdot)$ of the data, $z_i$ and $z_i'$. Let $\varepsilon_i$ be a Rademacher 
random variable, meaning a random random variable taking value $\pm 1$ with equal probability. 
Then as long as the symmetric random variables $\varepsilon_i\omega_i(z_i,z'_i)$ have sufficiently light tails, a McDiarmid type bound will hold. In particular, we will be able to derive high probability upper bounds on the deviations $\|\prox_{\varphi/\lambda}^{\Phi}(y)-\prox_{\varphi_S/\lambda}^{\Phi}(y)\|$ only assuming that the random variable $\varepsilon L$ is sub-Gaussian. The proof follows known techniques for establishing McDiarmid's inequality, and in particular is essentially the same as that in~\cite[Theorem 1]{kontorovich2014concentration}, though there the statement of the theorem assumed that $\omega_i$ is a metric and $\varepsilon_i\omega(z_i,z_i)$ is sub-Gaussian.

Henceforth, given a random variable $X$, we will let $\psi(\lambda):=\log(\EE e^{\lambda X})$ denote the logarithm of the moment generating function. The symbol $\psi^{\star}\colon\R\to\R\cup\{\infty\}$ will stand for the Fenchel conjugate of  $\psi$, defined by
$\psi^{\star}(t)=\sup_{\lambda} \{t\lambda -\psi(\lambda)\}$.

\begin{thm}[McDiarmid extended]\label{thm:McDiarmid}
	Let $z_1, \ldots, z_m$ be independent random variables with ranges $z_i \in \probspace_i$. Suppose that there exist functions $\omega_i\colon \probspace_i\times \probspace_i \rightarrow \RR_{+}$ such that the inequality \begin{align*}
	|g(z_1,\ldots,z_{i-1},z_i,z_{i+1},\ldots) - g(z_1,\ldots,z_{i-1},z_i',z_{i+1},\ldots)| \leq  \omega_i(z_i,z_i'),
	\end{align*}
	holds for all $z_j\in \probspace_j$, $z_i,z_i'\in \probspace_i$, and all $i,j\in \{1,\ldots, m\}$.
	Then the estimate holds: 
	\begin{equation}\label{eqn:moment_gen}
	\psi_{g(z)-\EE[g(z)]}(\lambda)\leq \sum_{i=1}^m \psi_{\varepsilon_i\omega_i}(\lambda)\qquad \forall \lambda,
	\end{equation}
	where $\omega_i$ denotes the random variable $\omega_i(z_i,z'_i)$ and $\varepsilon_i$ are i.i.d Rademacher random variables.
	In particular if $\omega_i=\omega_j$ for all indices $i$ and $j$, then we have
	\begin{equation}\label{eqn:prox_bound}
	\mathbb{P}(g(z)-\EE[g(z)]\geq t)\leq \exp\left(-m\psi_{\varepsilon\omega}^\star\left(\frac{t}{m}\right)\right)\qquad \forall t\geq 0.
	\end{equation}
\end{thm}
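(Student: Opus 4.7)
The plan is to follow the classical Doob martingale proof of McDiarmid's inequality, with a symmetrization twist that replaces the deterministic constant bound $c_i$ by the (random) quantity $\omega_i(z_i,z_i')$. First I would introduce an independent copy $z_i'$ of each coordinate and form the Doob martingale $V_i := \mathbb{E}[g(z)\mid z_{1:i}] - \mathbb{E}[g(z)\mid z_{1:i-1}]$, so that $g(z)-\mathbb{E}[g(z)] = \sum_{i=1}^m V_i$. Writing $H_i(z_{1:i}) := \mathbb{E}[g(z)\mid z_{1:i}]$ and using that $H_{i-1}(z_{1:i-1}) = \mathbb{E}_{z_i'}[H_i(z_{1:i-1},z_i')]$, I would express
\[
V_i = \mathbb{E}_{z_i'}\bigl[H_i(z_{1:i}) - H_i(z_{1:i-1}, z_i')\bigr] =: \mathbb{E}_{z_i'}[D_i(z_i,z_i')].
\]
The bounded-differences hypothesis, pushed through by conditional Jensen in the remaining coordinates, yields $|D_i(z_i,z_i')| \leq \omega_i(z_i,z_i')$ pointwise.

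The core step is a bound of the conditional MGF: I will show $\mathbb{E}[e^{\lambda V_i}\mid z_{1:i-1}] \leq \mathbb{E}[e^{\lambda \varepsilon_i \omega_i(z_i,z_i')}]$. By Jensen applied to the (outer) expectation over $z_i'$, $e^{\lambda V_i}\leq \mathbb{E}_{z_i'}[e^{\lambda D_i(z_i,z_i')}]$. The key symmetrization observation is that swapping $z_i$ and $z_i'$ negates $D_i$, and since $(z_i,z_i')$ is exchangeable conditional on $z_{1:i-1}$, the random variable $D_i$ has the same conditional distribution as $-D_i$, hence as $\varepsilon_i D_i$ for an independent Rademacher $\varepsilon_i$. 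Therefore
\[
\mathbb{E}[e^{\lambda V_i}\mid z_{1:i-1}] \leq \mathbb{E}[e^{\lambda D_i}\mid z_{1:i-1}] = \mathbb{E}[\cosh(\lambda D_i)\mid z_{1:i-1}].
\]
Because $\cosh$ is even and monotone on $[0,\infty)$, the pointwise bound $|D_i|\leq \omega_i(z_i,z_i')$ gives $\cosh(\lambda D_i) \leq \cosh(\lambda \omega_i(z_i,z_i')) = \mathbb{E}_{\varepsilon_i}[e^{\lambda \varepsilon_i \omega_i(z_i,z_i')}]$. Taking expectations and using independence of $(z_i,z_i')$ from $z_{1:i-1}$ yields the claimed conditional MGF bound.

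The remaining steps are routine. Iterating the tower property, $\mathbb{E}[e^{\lambda(g(z)-\mathbb{E}g(z))}] \leq \prod_{i=1}^m \mathbb{E}[e^{\lambda \varepsilon_i \omega_i}]$; taking logarithms yields \eqref{eqn:moment_gen}. For \eqref{eqn:prox_bound}, when all $\omega_i$ coincide, $\psi_{g(z)-\mathbb{E}g(z)}(\lambda)\leq m\,\psi_{\varepsilon\omega}(\lambda)$, and I would finish by the standard Chernoff bound:
\[
\mathbb{P}(g(z)-\mathbb{E}g(z)\geq t) \leq \inf_{\lambda\geq 0}\exp\bigl(-\lambda t + m\,\psi_{\varepsilon\omega}(\lambda)\bigr) = \exp\Bigl(-m\sup_{\lambda\geq 0}\bigl(\lambda(t/m) - \psi_{\varepsilon\omega}(\lambda)\bigr)\Bigr),
\]
which equals $\exp(-m\psi^\star_{\varepsilon\omega}(t/m))$ since the symmetric random variable $\varepsilon\omega$ satisfies $\psi_{\varepsilon\omega}(\lambda)=\psi_{\varepsilon\omega}(-\lambda)$ and $\psi_{\varepsilon\omega}(0)=0$, so for $t\geq 0$ the unconstrained conjugate agrees with the one-sided supremum.

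The only subtle point, and the step I would worry about most, is the symmetrization argument used to replace $D_i$ by $\varepsilon_i \omega_i(z_i,z_i')$ without a constant envelope. The proof must carefully exploit exchangeability of $(z_i,z_i')$ conditional on $z_{1:i-1}$ (so that $D_i$ is conditionally symmetric about zero) together with the evenness/monotonicity of $\cosh$; any attempt to bound $|V_i|$ directly by a non-random quantity fails precisely because $L$, and hence $\omega_i$, is unbounded in the intended application.
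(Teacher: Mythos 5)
Your proposal is correct and follows essentially the same route as the paper: the Doob martingale decomposition, introduction of an independent copy $z_i'$, Jensen's inequality, symmetrization via $\cosh$ together with its evenness and monotonicity to replace $|g(z)-g(z')|$ by $\omega_i(z_i,z_i')$, and the identity $\cosh(\lambda\omega_i)=\EE_{\varepsilon_i}[e^{\lambda\varepsilon_i\omega_i}]$, followed by the tower-property induction and the Cram\'er--Chernoff bound. The only (harmless) difference is that you integrate out the trailing coordinates before applying Jensen over $z_i'$, and you spell out the one-sided versus two-sided conjugate issue in the Chernoff step, which the paper leaves implicit.
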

\begin{proof}
	Define the Doob martingale sequence:
	$$Y_0:=\EE[g(z)]\qquad \textrm{and}\qquad Y_i:=\EE[g(z)\mid z_1,\ldots z_{i}]~  \textrm{ for }i=1,\ldots,m,$$
	and consider the martingale differences
	$$V_i:=Y_i-Y_{i-1}\qquad \textrm{for }i=1,\ldots,m.$$ 
	We aim to bound the moment generating function of $Y_m=g(z)$. To this end, observe
	\begin{equation}\label{eqn:induction}
	\EE[e^{\lambda Y_i}]=\EE\left[e^{\lambda Y_{i-1}}\EE[ e^{\lambda (Y_i-Y_{i-1})} \mid z_1,\ldots,z_{i-1}]\right].
	\end{equation}
	Thus, the crux of the proof is to bound the conditional expectation $\EE[ e^{\lambda V_i} \mid z_1,\ldots,z_{i-1}]$. 
	
	Form a vector $z'$  from $z$ by replacing $z_i$ by an identical distributed copy $z_i'$. Clearly then $$\EE[g(z')\mid z_1,\ldots,z_i]= \EE[g(z) \mid z_1,\ldots,z_{i-1}]=Y_{i-1}.$$
	Therefore we may write
	$V_i=Y_i-Y_{i-1}=\EE[g(z')-g(z)\mid z_1,\ldots,z_i]$. Hence, we deduce
	\begin{align*}
	\EE\left[ e^{\lambda V_i} \mid z_1, \ldots, z_{i-1}\right]
	&= \EE\left[ e^{\lambda \EE[g(z) - g(z')\mid z_1\ldots,z_{i}]} \mid z_1, \ldots, z_{i-1}\right]\\
	&\leq \EE\left[ e^{\lambda (g(z) - g(z'))} \mid z_1, \ldots, z_{i-1}\right]\\
	&= \EE\left[ \frac{1}{2}e^{\lambda (g(z) - g(z'))} + \frac{1}{2}e^{\lambda (g(z') - g(z))}\mid z_1, \ldots, z_{i-1}\right]\\
	&= \EE\left[ \cosh(\lambda(g(z) - g(z')))\mid z_1, \ldots, z_{i-1}\right]\\
	&= \EE\left[ \cosh(\lambda|g(z) - g(z')|)\mid z_1, \ldots, z_{i-1}\right]\\
	&\leq \EE\left[ \cosh(\lambda(\omega_i(z_i, z_i')))\mid z_1, \ldots, z_{i-1}\right]\\
	&= \EE\left[ \frac{1}{2}e^{\lambda (\omega_i(z_i, z_i'))} + \frac{1}{2}e^{-\lambda (\omega_i(z_i, z_i'))}\right]\\
	&= \EE\left[e^{\lambda\varepsilon_i \omega_i(z_i, z_i')}\right] = e^{\psi_{\varepsilon_i\omega_i}(\lambda)},
	\end{align*}
	where the first inequality follows by Jensen's inequality and the tower rule.
	Appealing to $\eqref{eqn:induction}$, and using induction, we therefore conclude
	$$\EE[e^{\lambda Y_m}]\leq  e^{\psi_{\varepsilon_m\omega_m}(\lambda)} \EE[e^{\lambda Y_{m-1}}]\leq \ldots\leq e^{\lambda \EE[g(z)]+\sum_{i=1}^m \psi_{\varepsilon_i\omega_i}(\lambda)}.$$
	Thus \eqref{eqn:moment_gen} is proved. The estimate \eqref{eqn:prox_bound} then follows by the standard Cram\'{e}r-Chernoff bounding method. Namely, assume $\omega_i=\omega_j$ for all indices $i$ and $j$. Then for every $t\geq 0$, Chernoff's inequality \cite[Page 21]{boucheron2013concentration} together with \eqref{eqn:moment_gen} implies
	\begin{equation}
	\mathbb{P}(g(z)-\EE[g(z)]\geq t)\leq e^{-(m \psi_{\varepsilon\omega})^{\star}(t)}.
	\end{equation}
	Noting the equality $(m \psi_{\varepsilon\omega})^{\star}(t)=m\psi_{\varepsilon\omega}^{\star}(\frac{t}{m})$ completes the proof.
\end{proof}

The final ingredient is to perform a covering argument over the points $y$. To this end, we will need the following theorem that guarantees Lipschitz continuity of the proximal map in the metric $d_{\Phi}(x,y):= \|\nabla \Phi(x) - \nabla \Phi(y)\|_\ast$.

\begin{lem}[Lipschitz continuity]\label{lem:lip_prox_bregman_final}
	Consider a closed and lower-bounded function $g\colon\R^d\to\R\cup\{\infty\}$ that is $\rho$-weakly convex and compatible with some Legendre function $\Phi\colon\R^d\to\{\infty\}$.  Assume in addition that $\Phi$ is $\alpha$-strongly convex on $\dom g\cap \inter(\dom \Phi)$. Then for any $\bar \rho>\rho$ and $x,y\in \inter(\dom \Phi)$, we have
	\begin{align*}
	\|\prox^{\Phi}_{g/\bar \rho}(x) - \prox^{\Phi}_{g/\bar\rho}(y)\| \leq \frac{\bar\rho}{\alpha(\bar\rho - \rho)}\|\nabla \Phi(x) - \nabla \Phi(y)\|_\ast.
	\end{align*}
\end{lem}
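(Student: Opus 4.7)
The plan is to exploit the first-order optimality conditions at the two proximal points and combine them via the relative weak convexity inequality \eqref{eqn:lower_bound_breg}, producing a monotonicity-type estimate from which the Lipschitz bound follows by Hölder plus strong convexity.

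First I would set $u:=\prox^\Phi_{g/\bar\rho}(x)$ and $v:=\prox^\Phi_{g/\bar\rho}(y)$. By Theorem~\ref{thm:env_diff} both points lie in $\dom g\cap\inter(\dom\Phi)$, and writing first-order optimality for the two strongly convex subproblems produces subgradients $p\in\partial g(u)$ and $q\in\partial g(v)$ satisfying
\[
p=\bar\rho\bigl(\nabla\Phi(x)-\nabla\Phi(u)\bigr),\qquad q=\bar\rho\bigl(\nabla\Phi(y)-\nabla\Phi(v)\bigr).
\]

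Next I would invoke the relative-weak-convexity lower bound \eqref{eqn:lower_bound_breg} twice: once at $(u,p)$ tested against $v$, and once at $(v,q)$ tested against $u$. Adding the two inequalities cancels the $g$-values and yields the ``hypomonotonicity'' estimate
\[
\langle p-q,\,u-v\rangle\ \geq\ -\rho\,\bregsym(u,v).
\]
Substituting the optimality identities for $p$ and $q$ on the left, and using the elementary identity $\langle\nabla\Phi(u)-\nabla\Phi(v),\,u-v\rangle=\bregsym(u,v)$ (which is immediate from summing the two definitions of $D_\Phi$), this rearranges to
\[
\bar\rho\,\langle \nabla\Phi(x)-\nabla\Phi(y),\,u-v\rangle\ \geq\ (\bar\rho-\rho)\,\bregsym(u,v).
\]

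Finally I would bound the left-hand side above by Hölder's inequality, $\bar\rho\|\nabla\Phi(x)-\nabla\Phi(y)\|_\ast\|u-v\|$, and the right-hand side below via the strong convexity assumption, $\bregsym(u,v)\geq\alpha\|u-v\|^2$. Dividing through by $\|u-v\|$ (the case $u=v$ being trivial) yields exactly the claimed estimate. There is no real obstacle here: the only thing to be careful about is getting the signs right when combining the two weak-convexity inequalities and recognizing $\bregsym$ as the pairing $\langle\nabla\Phi(u)-\nabla\Phi(v),u-v\rangle$, after which the proof is a short algebraic manipulation.
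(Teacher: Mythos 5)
Your proof is correct, and it reaches the paper's key intermediate inequality $\bar\rho\,\langle \nabla\Phi(x)-\nabla\Phi(y),\,u-v\rangle \geq (\bar\rho-\rho)\,\bregsym(u,v)$ by a genuinely different (dual) route. The paper stays entirely at the level of function values: it applies the $(\bar\rho-\rho)$-relative strong convexity of the two regularized objectives $g(\cdot)+\bar\rho D_\Phi(\cdot,x)$ and $g(\cdot)+\bar\rho D_\Phi(\cdot,y)$ at their respective minimizers, adds the two resulting inequalities so the $g$-values cancel, and then uses the three-point identity $D_\Phi(\hat y,x)-D_\Phi(\hat x,x)+D_\Phi(\hat x,y)-D_\Phi(\hat y,y)=\langle\nabla\Phi(x)-\nabla\Phi(y),\hat x-\hat y\rangle$. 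You instead pass to first-order optimality conditions and combine the subgradient inequality \eqref{eqn:lower_bound_breg} at the two proximal points into a hypomonotonicity estimate. Both arguments are two-point cancellation schemes and end identically (H\"older plus $\bregsym(u,v)\geq\alpha\|u-v\|^2$). The only thing your version needs that the paper's does not is the exact first-order condition $\bar\rho(\nabla\Phi(x)-\nabla\Phi(u))\in\partial g(u)$, which requires a sum rule for $\partial\bigl(g+\bar\rho D_\Phi(\cdot,x)\bigr)$ at $u$; this is unproblematic here because $u\in\inter(\dom\Phi)$ by Theorem~\ref{thm:env_diff} and $D_\Phi(\cdot,x)$ is differentiable there, but it is an extra (if routine) calculus step that the paper's purely primal argument avoids. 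In exchange, your route makes the mechanism transparent as the standard monotonicity proof of firm nonexpansiveness of the proximal map, adapted to the weakly convex Bregman setting.
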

\begin{proof}
	For any points $x, y$, set $\hat x=\prox^{\Phi}_{g/\bar\rho}(x)$ and $\hat y=\prox^{\Phi}_{g/\bar\rho}(y)$.
	Taking into account that $g+\bar\rho\Phi$ is $(\bar\rho-\rho)$ strongly convex relative to $\Phi$, we deduce
	\begin{align*}
	(\bar\rho - \rho) D_{\Phi}( \hat y,\hat x) &\leq \left(g(\hat y) + \bar\rho D_{\Phi}(\hat y, x)\right) -\left(g(\hat x)+\bar\rho D_{\Phi}(\hat x,x)\right) \\
	(\bar\rho - \rho) D_{\Phi}( \hat x,\hat y) 
	&\leq \left(g(\hat x)  + \bar\rho D_{\Phi}(\hat x, y)\right) -\left(g(\hat y)+\bar\rho D_{\Phi}(\hat y,y) \right).
	\end{align*}
	Adding these estimates together, we obtain 
	\begin{align*}
	\alpha(\bar\rho - \rho)\|\hat x - \hat y\|^2
	&\leq(\bar\rho - \rho)(D_{\Phi}( \hat y,\hat x) + D_{\Phi}( \hat x,\hat y)   \\
	&\leq\bar\rho \left(D_{\Phi}(\hat y, x) -  D_{\Phi}(\hat x,x)  +  D_{\Phi}(\hat x, y) -   D_{\Phi}(\hat y,y)\right)\\
	&= \bar\rho \dotp{\nabla \Phi(x) - \nabla \Phi(y), \hat x - \hat y} \\
	&\leq \bar\rho \|\nabla \Phi(x) - \nabla \Phi(y)\|^\ast \|\hat x - \hat y\|.
	\end{align*}
	The result follows by dividing both sides by $\|\hat x - \hat y\|$.
\end{proof}

We now have all the ingredients to prove the main result of this section. To this end, for any set $C\subseteq\inter(\dom \Phi)$, we will let $\cN(C,\Phi, \delta)$ denote the covering number of $C$ in the metric $(x,y)\mapsto \|\nabla \Phi(x) - \nabla \Phi(y)\|_\ast$.

\begin{thm}[Concentration of the stationarity measure]\label{thm:prox_close_p}
	Let $C\subseteq\inter(\dom \Phi)$ be any set. Then with probability $$1-\cN(C,\Phi, \delta) \exp\left( -m\cdot\psi^\star_{\varepsilon L}\left(\frac{s}{\sqrt{m}}\right)\right),$$  we have
	$$\sup_{y\in C}~\|\prox_{\varphi_S/\bar\rho}^\Phi(y)-\prox_{\varphi/\bar\rho}^\Phi(y)\|\leq \sqrt{\tfrac{4(\sigma+s)^2}{\alpha(\bar\rho-\rho)^2 m}}+\tfrac{2\bar\rho\delta}{\alpha(\bar\rho-\rho)}.$$
	If  $\Phi$ is in addition twice differentiable on $\inter(\dom \Phi)$, then with the same probability, the estimate holds: 
	$$\sup_{y\in C}~\|\nabla(\varphi_S)_{1/2\rho}^\Phi(y)-\nabla \varphi_{1/2\rho}^\Phi(y)\|^*_x\leq  \sqrt{\tfrac{16(\sigma+s)^2}{\alpha m}}+\tfrac{8\rho\delta}{\alpha}.$$
\end{thm}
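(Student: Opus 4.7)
The plan is to follow the three-step blueprint laid out in the paragraph preceding the theorem: a pointwise in-expectation estimate via Theorem~\ref{thm:stab_ERM}, a pointwise tail bound via the extended McDiarmid inequality from Theorem~\ref{thm:McDiarmid}, and a covering argument anchored on the Lipschitz continuity of the proximal map from Lemma~\ref{lem:lip_prox_bregman_final}. Fix $y\in \inter(\dom\Phi)$ and write $\alg(S):=\prox^\Phi_{\varphi_S/\bar\rho}(y)$ and $\alg^*:=\prox^\Phi_{\varphi/\bar\rho}(y)$. Assumption (A1) gives $\|\alg(S)-\alg^*\|^2\leq \frac{2}{\alpha}D_\Phi(\alg(S),\alg^*)$, so combining \eqref{eqn:comparsiontoopt} with Jensen's inequality yields $\EE_S\|\alg(S)-\alg^*\|\leq \frac{2\sigma}{\sqrt{\alpha m}(\bar\rho-\rho)}$.

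Next, view $g(z_1,\ldots,z_m):=\|\alg(S)-\alg^*\|$ as a function of the sample. Reusing (A1) and the stability bound \eqref{eqn:stab_guarant}, I obtain the bounded difference estimate $|g(S)-g(S^i)|\leq \omega(z_i,z_i')$ with $\omega(z_i,z_i'):=(L(z_i)+L(z_i'))/(\sqrt{\alpha}(\bar\rho-\rho)m)$. The principal technical obstacle is that Theorem~\ref{thm:McDiarmid} yields a bound in terms of $\psi^\star_{\varepsilon\omega}$, whereas the theorem demands $\psi^\star_{\varepsilon L}$. Writing $c:=\sqrt{\alpha}(\bar\rho-\rho)m$, Jensen's inequality applied to $x\mapsto x^2$ gives
\[
\EE[\cosh(\lambda(L+L')/c)]=\tfrac{1}{2}(\EE e^{\lambda L/c})^2+\tfrac{1}{2}(\EE e^{-\lambda L/c})^2\leq \EE[\cosh(2\lambda L/c)],
\]
hence $\psi_{\varepsilon\omega}(\lambda)\leq \psi_{\varepsilon L}(2\lambda/c)$. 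A linear change of variables in the Fenchel conjugate then converts the McDiarmid bound into $\PP(g-\EE g\geq t)\leq \exp(-m\psi^\star_{\varepsilon L}(\sqrt{\alpha}(\bar\rho-\rho)t/2))$. Choosing $t=\frac{2s}{\sqrt{\alpha m}(\bar\rho-\rho)}$ matches the target failure probability, and combining with the expectation bound above gives the pointwise estimate $\|\alg(S)-\alg^*\|\leq \frac{2(\sigma+s)}{\sqrt{\alpha m}(\bar\rho-\rho)}$.

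To upgrade to a uniform bound, let $\{y_1,\ldots,y_N\}$ be a $\delta$-net of $C$ in the metric $(x,y)\mapsto \|\nabla\Phi(x)-\nabla\Phi(y)\|_*$, with $N=\cN(C,\Phi,\delta)$, and apply the pointwise concentration inequality at each net point, union-bounding the failure probabilities. For arbitrary $y\in C$, pick the closest $y_j$ and invoke Lemma~\ref{lem:lip_prox_bregman_final} together with the triangle inequality to introduce the additive error $\frac{2\bar\rho\delta}{\alpha(\bar\rho-\rho)}$, yielding the first stated bound. For the gradient estimate, specialize to $\bar\rho=2\rho$ and invoke the envelope formula $\nabla g^\Phi_{1/\bar\rho}(y)=\bar\rho\,\nabla^2\Phi(y)(y-\prox^\Phi_{g/\bar\rho}(y))$ from Theorem~\ref{thm:env_diff}. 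Subtracting for $g=\varphi$ and $g=\varphi_S$ and using $\|\cdot\|_y^*=\|\nabla^2\Phi(y)^{-1}\cdot\|$ cancels the Hessian factor, producing $\|\nabla(\varphi_S)^\Phi_{1/2\rho}(y)-\nabla\varphi^\Phi_{1/2\rho}(y)\|_y^*=2\rho\|\prox^\Phi_{\varphi_S/2\rho}(y)-\prox^\Phi_{\varphi/2\rho}(y)\|$. Substituting the uniform proximal bound (with $\bar\rho-\rho=\rho$) and simplifying yields the claimed gradient estimate.
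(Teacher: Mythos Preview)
Your proof is correct and follows essentially the same three-step approach as the paper: the pointwise expectation bound from Theorem~\ref{thm:stab_ERM}, the McDiarmid-type tail bound from Theorem~\ref{thm:McDiarmid}, and the covering argument via Lemma~\ref{lem:lip_prox_bregman_final}. In fact, you are more careful than the paper at one point: the paper asserts the \emph{equality} $\psi_{\varepsilon\omega}(\lambda)=\psi_{\varepsilon L}(2\lambda/c)$, whereas only the inequality $\psi_{\varepsilon\omega}(\lambda)\le \psi_{\varepsilon L}(2\lambda/c)$ holds in general, and your Jensen-based argument supplies exactly that inequality (which is the direction needed for the conjugate bound and hence the tail estimate).
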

\begin{proof}
	Following the notation of Theorem~\ref{thm:stab_ERM}, set $$\alg(y, S):=\prox^{\Phi}_{\varphi_S/\bar\rho}(y)\qquad\textrm{and} \qquad\alg^*(y):=\prox^{\Phi}_{\varphi/\bar\rho}(y).$$
	Define the function
	$$
	g(y,S) :=  \|\alg(y, S) - \alg^\ast(y)\|.
	$$
	We will first apply Theorem~\ref{thm:McDiarmid} to the function $g(y,\cdot)\colon \Omega^m\to\R$, with $y$ fixed. To verify the bounded difference property, we compute 
	\begin{align}
	|g(y,S) - g(y,S^{i})| &= \left|\|\alg(y,S) - \alg^\ast(y)\| -\|\alg(y,S^{i}) - \alg^\ast(y)\|\right|,\notag \\
	&\leq \|\alg(y,S) - \alg(y,S^{i})\|, \label{eqn:triangel}\\
	&\leq \frac{L(z_i) + L(z_i')}{\sqrt{\alpha}(\bar\rho-\rho)m},\label{eqn:lkapply_lemma}
	\end{align}
	where \eqref{eqn:triangel} uses the reverse triangle inequality, while \eqref{eqn:lkapply_lemma} follows from the estimate \eqref{eqn:stab_guarant} and strong convexity of $\Phi$. 
	Setting $\omega(z_i,z_i')=\frac{L(z_i) + L(z_i')}{\sqrt{\alpha}(\bar\rho-\rho)m}$, we deduce 
	$$\psi_{\varepsilon\omega}(\lambda)=\psi_{\varepsilon L}\left(\frac{2\lambda}{\sqrt{\alpha}(\bar\rho-\rho)m}\right)\qquad \textrm{and}\qquad \psi_{\varepsilon\omega}^\star(t)=\psi_{\varepsilon L}^\star\left(\sqrt{\alpha}(\bar\rho-\rho)m t/2\right).$$
	Note moreover from \eqref{eqn:comparsiontoopt} the bound $\EE g(y,S)=\EE \|\alg(y, S) - \alg^\ast(y)\|\leq \sqrt{\frac{4\sigma^2}{\alpha(\bar\rho-\rho)^2 m}}$. Thus, applying  Theorem~\ref{thm:McDiarmid}, we conclude
	$$\mathbb{P}\left(g(y,S)\geq \sqrt{\tfrac{4\sigma^2}{\alpha(\bar\rho-\rho)^2 m}}+t\right)\leq \exp\left( -m\psi^{\star}_{\varepsilon L}\left(\sqrt{\alpha}(\bar\rho-\rho)t/2\right)\right).
	$$
	Next we show using Lemma~\ref{lem:lip_prox_bregman_final} that $g(\cdot, S)$ is $\tfrac{2\bar\rho}{\alpha(\bar\rho-\rho)}$-Lipschitz with respect to the metric $d_{\Phi}(x,y)$. Indeed, observe
	\begin{align*}
	|g(y, S) - g(y', S)| &\leq \Big|\|\alg(y,S)-\alg^*(y)\|-\|\alg(y',S)-\alg^*(y')\|\Big|\\
	&\leq \|\alg(y, S) - \alg(y', S)\|+\|\alg^*(y) - \alg^*(y')\| \leq \tfrac{2\bar\rho}{\alpha(\bar\rho-\rho)}\bregmetric(y, y'),
	\end{align*}
	where we used the triangle inequality and Lipschitz continuity of the proximal operator (Lemma~\ref{lem:lip_prox_bregman_final}). 
	Let  $\{y_i\}$ be a $\delta$-net of $C$ in the metric $d_{\Phi}$. Thus for every $y$ in a $\delta$-ball of $y_i$, we have $g(y,S)\leq g(y_i,S)+\frac{2\bar\rho\delta}{\alpha(\bar\rho-\rho)}$. Taking a union bound over the net, we therefore deduce
	$$\mathbb{P}\left(g(y,S)\leq \sqrt{\tfrac{4\sigma^2}{\alpha(\bar\rho-\rho)^2 m}}+\tfrac{2\bar\rho\delta}{\alpha(\bar\rho-\rho)}+t\right)\geq 1-\cN(C,\Phi, \delta) \exp\left( -m\psi^\star_{\varepsilon L}\left(\sqrt{\alpha}(\bar\rho-\rho)t/2\right)\right).$$
	Setting $t=\frac{2s}{\sqrt{\alpha m}(\bar\rho-\rho)}$ completes the proof.
\end{proof}

We end the section by showing how Theorem~\ref{thm:prox_close_p} directly implies analogous bounds on a localized Hausdorff distance between the subdifferential graphs, $\gph \partial \varphi$ and $\gph \partial \varphi_S$.

\begin{thm}[Concentration of subdifferential graphs]
	Let $C\subseteq\R^d$ be any set and let $r>0$ and $\bar{\rho}>\rho$ be arbitrary.
	Then with probability $$1-\cN\left(C+r\bar\rho {\bf B},\Phi, \delta\right) \exp\left( -m\cdot\psi^\star_{\varepsilon L}\left(\frac{s}{\sqrt{m}}\right)\right),$$ the estimates hold
	\begin{align}
	(C\times r{\bf B})\cap\gph\partial \varphi_S&\subset	\gph\partial \varphi+\left(\sqrt{\tfrac{4(\sigma+s)^2}{\alpha(\bar\rho-\rho)^2 m}}+\tfrac{2\bar\rho\delta}{\alpha(\bar\rho-\rho)}\right)\left({\bf B}\times  \bar\rho{\bf B}\right) \label{eqn:subdiff1},\\
	(C\times r{\bf B})\cap\gph\partial \varphi&\subset	\gph\partial \varphi_S+\left(\sqrt{\tfrac{4(\sigma+s)^2}{\alpha(\bar\rho-\rho)^2 m}}+\tfrac{2\bar\rho\delta}{\alpha(\bar\rho-\rho)}\right)\left({\bf B}\times \bar\rho{\bf B}\right) \label{eqn:subdiff2}.
	\end{align}

\end{thm}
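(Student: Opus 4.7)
The plan is to exploit the well-known linear isomorphism between $\gph\partial g$ and $\gph\prox^{\Phi}_{g/\bar\rho}$ in order to transfer the uniform proximal-map bound of Theorem~\ref{thm:prox_close_p} directly to the subdifferential graphs. Writing out first-order optimality for the Bregman proximal map yields the equivalence
\[
v\in\partial g(y)\quad\Longleftrightarrow\quad y=\prox^{\Phi}_{g/\bar\rho}(x),\text{ where }x\text{ is uniquely determined by }\nabla\Phi(x)=\nabla\Phi(y)+v/\bar\rho.
\]
In the Euclidean setting this collapses to $x=y+v/\bar\rho$, so the map $(y,v)\mapsto(x,y)$ is a concrete linear bijection between the two graphs, and this correspondence is what I will use to push points from one graph to the other.

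For the inclusion \eqref{eqn:subdiff1}, I would fix an arbitrary $(\bar x,\bar v)\in (C\times r\mathbf{B})\cap \gph\partial\varphi_S$ and let $x$ denote the corresponding proximal ``input,'' so that $\bar x=\prox^{\Phi}_{\varphi_S/\bar\rho}(x)$. The defining relation $\nabla\Phi(x)-\nabla\Phi(\bar x)=\bar v/\bar\rho$ together with $\|\bar v\|\le r$ places $x$ in the $(r/\bar\rho)$-enlargement of $C$ in the $d_\Phi$-metric—this is the set whose covering number governs the probability bound in Theorem~\ref{thm:prox_close_p}. I then set
\[
\bar x':=\prox^{\Phi}_{\varphi/\bar\rho}(x),\qquad \bar v':=\bar\rho\bigl(\nabla\Phi(x)-\nabla\Phi(\bar x')\bigr),
\]
and observe, by the very same first-order condition, that $(\bar x',\bar v')\in\gph\partial\varphi$. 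Applying Theorem~\ref{thm:prox_close_p} at the point $x$ in the enlarged set, with the stated probability,
\[
\|\bar x-\bar x'\|=\|\prox^{\Phi}_{\varphi_S/\bar\rho}(x)-\prox^{\Phi}_{\varphi/\bar\rho}(x)\|\le \sqrt{\tfrac{4(\sigma+s)^2}{\alpha(\bar\rho-\rho)^2 m}}+\tfrac{2\bar\rho\delta}{\alpha(\bar\rho-\rho)}.
\]
In the Euclidean case $\bar v-\bar v'=\bar\rho(\bar x'-\bar x)$ immediately gives $\|\bar v-\bar v'\|\le\bar\rho\|\bar x-\bar x'\|$, so $(\bar x,\bar v)$ lies in $\gph\partial\varphi$ inflated by the product ball $\mathbf{B}\times\bar\rho\mathbf{B}$ scaled by the above bound, which is exactly \eqref{eqn:subdiff1}. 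The second inclusion \eqref{eqn:subdiff2} is then obtained by the symmetric argument, swapping the roles of $\varphi$ and $\varphi_S$ throughout; the probability event is the same one coming from a single application of Theorem~\ref{thm:prox_close_p}.

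The main obstacle I anticipate is the careful bookkeeping in the general Bregman case rather than any serious conceptual difficulty: one must translate the $d_\Phi$-proximity of $x$ to $\bar x$ into membership in the appropriate set used by the covering number in Theorem~\ref{thm:prox_close_p}, and then convert the identity $\bar v-\bar v'=\bar\rho(\nabla\Phi(\bar x')-\nabla\Phi(\bar x))$ into a norm bound on $\bar v-\bar v'$. In the Euclidean setting both steps are automatic; in the general Bregman setting the cleanest route is to restate the conclusion in terms of the local norms $\|\cdot\|_x$ and $\|\cdot\|_x^{\ast}$ appearing in Theorem~\ref{thm:prox_close_p}, so that the dual scaling by $\nabla^2\Phi(x)$ absorbs the asymmetry introduced by $\nabla\Phi$ being nonlinear.
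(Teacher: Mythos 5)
Your proposal is correct and follows essentially the same route as the paper: identify each pair $(y,v)\in\gph\partial\varphi_S$ with the proximal input $x$ via the linear isomorphism between $\gph\partial g$ and $\gph\prox^{\Phi}_{g/\bar\rho}$, observe that $x$ lands in a controlled enlargement of $C$, apply Theorem~\ref{thm:prox_close_p} there, and convert the proximal deviation back into a product-ball inclusion, with the reverse inclusion by symmetry. The only (cosmetic) divergence is the scaling of the enlargement of $C$ — you place $x$ in a $(r/\bar\rho)$-neighborhood while the theorem statement uses $C+r\bar\rho\mathbf{B}$, a discrepancy traceable to the paper's own inconsistent normalization of the prox--subdifferential correspondence — which does not affect the validity of your argument.
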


\begin{proof}
	Fix a pair of points $x,y\in \R^d$ and observe the equivalence
	$$y=\prox_{\varphi_S/\bar \rho}(x)\quad\Longleftrightarrow\quad  \bar{\rho}^{-1}(x-y)\in \partial \varphi_S(y).$$
	Let $y\in C$ and $v\in r{\bf B}\cap \partial \varphi_S(y)$ be arbitrary.
	Define the point $x:=y+\bar\rho v$. Clearly then we may write $y=\prox_{\varphi_S/\bar \rho}(x)$ and the inclusion $x\in C+r\bar\rho {\bf B}$ holds. Appealing to Theorem~\ref{thm:prox_close_p}, we therefore deduce that with probability $$1-\cN(C+r\bar\rho {\bf B},\Phi, \delta) \exp\left( -m\cdot\psi^\star_{\varepsilon L}\left(\frac{s}{\sqrt{m}}\right)\right),$$ simultaneously for all $y\in C$ and $v\in  r{\bf B}\cap \partial\varphi_S(y)$ and $\delta>0$, we have
	$$\|y-\prox_{\varphi/\bar\rho}(x)\|\leq \sqrt{\tfrac{4(\sigma+s)^2}{\alpha(\bar\rho-\rho)^2 m}}+\tfrac{2\bar\rho\delta}{\alpha(\bar\rho-\rho)}.$$
	Set $y':=\prox_{\varphi/\bar\rho}(x)$ and 
	$v':=\bar\rho^{-1}(x-y')\in \partial \varphi(z),$
	and observe
	$$\frac{1}{\bar\rho}\|v-v'\|=\|y-y'\|\leq \sqrt{\tfrac{4(\sigma+s)^2}{\alpha(\bar\rho-\rho)^2 m}}+\tfrac{2\bar\rho\delta}{\alpha(\bar\rho-\rho)}.$$
	Thus we showed $$(y,v)\in (y',v')+\left(\sqrt{\tfrac{4(\sigma+s)^2}{\alpha(\bar\rho-\rho)^2 m}}+\tfrac{2\bar\rho\delta}{\alpha(\bar\rho-\rho)}\right)({\bf B}\times {\bar\rho}{\bf B}).$$
	The inclusion~\eqref{eqn:subdiff1} follows immediately, while \eqref{eqn:subdiff2} follows by a symmetric argument.
\end{proof}

We next instantiate  Theorem~\ref{thm:prox_close_p} in the $\ell_2$ and $\ell_1$ settings. We will require the use of the sub-Gaussian norm of any random variable $X$, which is defined to be 
$\|X\|_{sg}:=\inf\{t>0: \EE\exp(X^2/t^2)\leq 2\}$, along with the sub-exponential norm $\|X\|_{se}:=\inf\{t>0: \EE\exp(|X|/t)\leq 2\}$. Recall also that given any norm $\|\cdot\|$, the $\delta$-covering number of the unit ball $\{x:\|x\|\leq 1\}$ in the norm $\|\cdot\|$ is at most 
$\left(1+\frac{2}{\delta}\right)^d$. 

\subsection*{The Euclidean $\ell _2$ set-up.}
Let us instantiate Theorem~\ref{thm:prox_close_p} in the Euclidean setting $\Phi=\frac{1}{2}\|\cdot\|^2_2$ with $C$ whose diameter in $\ell_2$-norm is bounded by $B$. 

\paragraph{Sub-Gaussian Lipschitz constant:} Suppose that $L-\EE L$ is a sub-Gaussian random variable with parameter $\nu=\|L-\EE[L]\|_{sg}$. Using the triangle inequality, we therefore deduce
$$\|\varepsilon L\|_{sg}=\|L\|_{sg}\leq \|L-\EE[L]\|_{sg}+ \|\EE[L]\|_{sg}\leq \nu+\frac{\sigma}{\sqrt{\ln 2}}.$$
Appealing to \cite[Equation 2.16]{vershynin2018high}, we conclude  
$\psi_{\varepsilon L}(\lambda)\leq \frac{c}{2}\left(\nu+\sigma\right)^2\lambda^2$ for all $\lambda\in \R$, where $c$ is a numerical constant. Taking conjugates yields the relation $\psi_{\varepsilon L}^{\star}(t)\geq \frac{t^2}{c\left(\nu+\sigma\right)^2}$. Appealing to Theorem~\ref{thm:prox_close_p}, while setting $s=\sqrt{c(\nu+\sigma)^2\ln\left(\frac{\cN(C,\Phi, \delta)}{\gamma}\right)}$ and $\delta =\frac{1}{\rho}\sqrt{\frac{d}{m}}$, we deduce that with probability
$1-\gamma$, the estimate holds: 
$$\sup_{y\in C}~\|\nabla(\varphi_S)_{1/2\rho}(y)-\nabla \varphi_{1/2\rho}(y)\|_2\lesssim \sqrt{\frac{\sigma^2+d}{ m}+\frac{(\nu+\sigma)^2 d}{m}\ln\left(\tfrac{R}{\gamma}\right)},$$
where we set $R:=1+2\rho B\sqrt{\frac{m}{d}}$.

\paragraph{Globally bounded Lipschitz constant:}
As the next example, suppose that there exists a constant $\lipsymb$ satisfying $L(z)\leq \lipsymb$ for a.e. $z\in \Omega$. Then clearly we have $\sigma\leq L$ and $\nu:=\|L-\EE[L]\|_{sg}\lesssim \lipsymb$. Consequently, we deduce that with probability
$1-\gamma$, the estimate holds: 
$$\sup_{y\in C}~\|\nabla(\varphi_S)_{1/2\rho}(y)-\nabla \varphi_{1/2\rho}(y)\|_2\lesssim \sqrt{\frac{\lipsymb^2+d}{ m}+\frac{\lipsymb^2 d}{m}\ln\left(\tfrac{R}{\gamma}\right)}.$$
where we set $R:=1+2\rho B\sqrt{\frac{m}{d}}$.

\paragraph{Sub-exponential Lipschitz constant:}
As the final example, we suppose that $L-\EE[L]$ is sub-exponential and set $\nu=\|L-\EE[L]\|_{se}$. A completely analogous argument as in the sub-Gaussian case implies $\|\varepsilon L\|_{se}\leq \nu+\frac{\sigma}{\ln(2)}$. Appealing to \cite[Proposition 2.7.1]{vershynin2018high}, we deduce 
$\psi_{\varepsilon L}(\lambda)\leq c ^2(\nu+\sigma)^2\lambda^2$ for all $|\lambda|\leq \frac{1}{c(\nu+\sigma)}$. To simplify notation set $\eta:=c(\nu+\sigma)$. Taking conjugates, we therefore deduce 
$$\psi_{\varepsilon L}^{\star}(t)\geq \begin{cases} 
\frac{t^2}{4\eta^2} & \textrm{if } |t|\leq 2\eta  \\
\frac{|t|}{\eta}-1 & \textrm{otherwise} \\
\end{cases}.$$
Consequently, we deduce the usual Bernstein-type bound
$\psi_{\varepsilon L}^{\star}(t)\geq \min \{\frac{t^2}{4\eta^2},\frac{t}{2\eta}\}$. Setting 
$s=2\eta\cdot\max\left\{\sqrt{\ln\left(\frac{\cN(C,\Phi, \delta)}{\gamma}\right)},\frac{1}{\sqrt{m}}\ln\left(\frac{\cN(C,\Phi, \delta)}{\gamma}\right)\right\}$ and $\delta=\frac{1}{\rho}\sqrt{\frac{d}{m}}$ in Theorem~\ref{thm:prox_close_p}, we deduce that with probability $1-\gamma$, we have
$$\sup_{y\in C}~\|\nabla(\varphi_S)_{1/2\rho}(y)-\nabla \varphi_{1/2\rho}(y)\|_2\lesssim  \sqrt{\frac{\sigma^2+d}{m}+(\nu+\sigma)^2\max\left\{\frac{d}{m}\log\left(\tfrac{R}{\gamma}\right),\frac{d^2}{m^2}\log^2\left(\tfrac{R}{\gamma}\right)\right\}},$$
where we set $R:=1+2\rho B\sqrt{\frac{m}{d}}$.

\subsection*{The $\ell_1$ set-up.}
Suppose now we use the $\ell_1$ set-up as in Example~\ref{ex:non_euclid}. Namely set
$\Phi(x)=\frac{1}{2(p-1)}\|x\|^2_p$ with $p=1+\frac{1}{\ln d}$. Noting that $\Phi$ is  $1/e^2$-strongly convex with respect to the $\ell_1$-norm, we may set $\alpha=1/e^2$. Note that $\Phi$ is not twice differentiable; indeed $\nabla \Phi$ is only H\"{o}lder continuous. Consequently, we will focus on the uniform estimates on the quantity $\|\prox_{\varphi_S/2\rho}^\Phi(y)-\prox_{\varphi/2\rho}^\Phi(y)\|_1$, instead of the gradient of the Bregman envelopes. 

Let $C\subset\R^d$ now be an arbitrary set whose diameter in the $\ell_1$ norm is upper bounded by $B$. To simplify the arithmetic, we'll suppose throughout $B\geq 1$.
We will need to estimate the covering number $\cN(C,\Phi, \delta)$. To this end, using \cite[eqn. 5.5]{complexity}, we have the estimate\footnote{The result \cite[eqn. 5.5]{complexity}, as stated, applies to a slight modification of $\Phi$; it, however, immediately implies \eqref{eqn:needed_holder} by using positively homogeneity of $\nabla\Phi$.}
\begin{equation}\label{eqn:needed_holder}
\|\nabla \Phi(y)-\nabla \Phi(x)\|_{\infty}\leq 4\cdot (2B)^{\frac{1}{\ln (d)}} \ln (d)\cdot\|y-x\|^{\frac{1}{\ln (d)}}_1.
\end{equation}
Therefore $\cN(C,\Phi, \delta)$ is upper bounded by the $\frac{1}{2B}\left(\frac{\delta}{4\ln d}\right)^{\ln d}$-covering number of $C$ in the norm $\|\cdot\|_1$. Hence a quick computation shows
$\cN(C,\Phi, \delta)\leq (1+\frac{32B^2\ln (d)}{\delta})^{d\ln d}$.

\paragraph{Sub-Gaussian Lipschitz constant:}
Suppose that $L-\EE L$ is a sub-Gaussian random variable with parameter $\nu=\|L-\EE[L]\|_{sg}$. Consequently, as  noted above, we have 
$\psi_{\varepsilon L}^{\star}(t)\geq \frac{t^2}{c\left(\nu+\sigma\right)^2}$ for all $t\in \R$, where $c$ is a numerical constant. As in the Euclidean setting, appealing to Theorem~\ref{thm:prox_close_p} with $s=\sqrt{c(\nu+\sigma)^2\ln\left(\frac{\cN(C,\Phi, \delta)}{\gamma}\right)}$ and $\delta =\frac{1}{\rho}\sqrt{\frac{d}{m}}$, we deduce that with probability
$1-\gamma$, the estimate holds: 
$$\sup_{y\in C}~\|\prox_{\varphi_S/2\rho}^\Phi(y)-\prox_{\varphi/2\rho}^\Phi(y)\|_1 \lesssim \frac{1}{\rho}\cdot\sqrt{\frac{\sigma^2+d}{  m}+\frac{(\nu+\sigma)^2 d\ln(d)}{ m}\ln\left(\tfrac{R}{\gamma}\right)},$$
where we set $R:=1+32\rho B^2\sqrt{\frac{m\ln^2(d)}{d}}$.

\paragraph{Globally bounded Lipschitz constant:}
As the next example, suppose that there exists a constant $\lipsymb$ satisfying $L(z)\leq \lipsymb$ for a.e. $z\in \Omega$. Then the same reasoning as in the Euclidean setting yields the guarantee: with probability
$1-\gamma$, the estimate holds: 
$$\sup_{y\in C}~\|\prox_{\varphi_S/2\rho}^\Phi(y)-\prox_{\varphi/2\rho}^\Phi(y)\|_1 \lesssim \frac{1}{\rho}\cdot \sqrt{\frac{\lipsymb^2+d}{ m}+\frac{\lipsymb^2 d\ln(d)}{m}\ln\left(\tfrac{R}{\gamma}\right)}.$$
where we set $R:=1+32\rho B^2\sqrt{\frac{m\ln^2(d)}{d}}$.

\paragraph{Sub-exponential Lipschitz constant:}
As the final example, we suppose that $L-\EE[L]$ is sub-exponential and set $\nu=\|L-\EE[L]\|_{se}$. A completely analogous argument as in the Euclidean case yields the guarantee: with probability
$1-\gamma$, the estimate holds: 
$$\sup_{y\in C}~\|\prox_{\varphi_S/2\rho}^\Phi(y)-\prox_{\varphi/2\rho}^\Phi(y)\|_1 \lesssim \tfrac{1}{\rho}\cdot  \sqrt{\tfrac{\sigma^2+d}{m}+(\nu+\sigma)^2\max\left\{\tfrac{d\log d}{m}\log\left(\tfrac{R}{\gamma}\right),\left(\tfrac{d\log (d)}{m}\right)^2\log^2\left(\tfrac{R}{\gamma}\right)\right\}},$$
where we set $R:=1+32\rho B^2\sqrt{\frac{m\ln^2(d)}{d}}$.

\section{Dimension Independent Rates.}\label{sec:dim_indep}
In this section, we introduce a technique for obtaining bounds on the graphical distance between subdifferentials from estimates on the closeness of function values. The main result we use is a quantitative version of the Attouch convergence theorem from variational analysis~\cite{attouch1,attouch2}. A variant of this theorem was recently used by the authors to analyze the landscape of the phase retrieval problem~\cite[Theorem 6.1]{davis2017nonsmooth}. For the sake of completeness, we present a short argument, which incidentally simplifies the original exposition in~\cite{davis2017nonsmooth}. 

The approach of this section has benefits and drawbacks. The main benefit is that because we obtain subdifferential distance bounds via closeness of values, whenever function values uniformly converge at a dimension independent rate, so do the subdifferentials. This type of result is in stark contrast to the results in Section~\ref{sec:dimension_dep}, which scale with the dimension. The main drawback is that for losses that uniformly converge at a rate of $\delta$, we can only deduce subdifferential bounds on the order of ${O}(\sqrt{\delta})$, yielding what appear to be suboptimal rates. Nevertheless, the very existence of dimension independent bounds is notable. We will illustrate the use of the techniques on two examples: learning with generalized linear models (Section~\ref{sec:multiclass}) and robust nonlinear regression (Section~\ref{sec:robust_nonlinear}).

\begin{thm}[Subdifferential graphs]\label{thm:compar}
	Consider two closed and lower-bounded functions $g,h\colon \R^d\to\R\cup\{\infty\}$, having identical domain $\mathcal{D}$, and which are $\rho$-weakly convex and compatible with a Legendre function $\Phi\colon\R^d\to\R\cup\{\infty\}$. Suppose moreover that $\Phi$ is $\alpha$-strongly convex relative to some norm $\|\cdot\|$ on $ \mathcal{D}\cap \inter(\dom \Phi)$ and for some real $l,u\in \R$, the inequalities hold: 
	\begin{equation}\label{eqn:uniform_closeness}
	l\leq h(x)-g(x)\leq u, \qquad \forall x\in \mathcal{D}\cap \inter(\dom \Phi).
	\end{equation}
	Then for any $\bar\rho>\rho$ and any point $x\in \inter(\dom\Phi)$, the   estimate holds:
	\begin{align}
	\bregsym\left(\prox_{g/\bar\rho}^\Phi(x),\prox_{h/\bar\rho}^\Phi(x)\right)\leq \frac{u-l}{\bar\rho-\rho},\label{eqn:breg_close_root}
	\end{align}
	and therefore, provided that $\Phi$ is twice differentiable on $\inter(\dom\Phi)$,  we have
	\begin{equation}
	\|\nabla g_{1/\bar\rho}^\Phi(x)-\nabla h_{1/\bar\rho}^\Phi(x)\|_x^{*}\leq \bar\rho \sqrt{\frac{u-l}{\alpha(\bar\rho-\rho)}}\label{eqn:breg_close_root_triangle}.
	\end{equation}
	Moreover, in the Euclidean setting $\Phi=\frac{1}{2}\|\cdot\|^2_2$, we obtain the estimate:
	\begin{equation}\label{eqn:subdiff_close}
	\dist_{1/\bar \rho}\left(\gph \partial g,\gph \partial h\right)\leq \sqrt{\frac{u-l}{\bar\rho-\rho}},
	\end{equation}
	where the Hausdorff distance $\dist_{1/\bar \rho}(\cdot,\cdot)$ is induced by the norm $(x,v)\mapsto \max\left\{\|x\|_2,\frac{1}{\bar \rho}\|v\|_2\right\}$.
\end{thm}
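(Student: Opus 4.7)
My plan is to pass through the proximal map. Set $y_g := \prox_{g/\bar\rho}^\Phi(x)$ and $y_h := \prox_{h/\bar\rho}^\Phi(x)$. By definition these are the minimizers of the functions $u \mapsto g(u)+\bar\rho D_\Phi(u,x)$ and $u \mapsto h(u)+\bar\rho D_\Phi(u,x)$, respectively. Both of these functions are $(\bar\rho-\rho)$-strongly convex relative to $\Phi$, which is the key engine of the proof. Applying the strong-convexity inequality at the minimizer $y_g$ with test point $y_h$, and symmetrically at the minimizer $y_h$ with test point $y_g$, yields
\begin{align*}
(\bar\rho-\rho)\,D_\Phi(y_h,y_g) &\leq \bigl[g(y_h)+\bar\rho D_\Phi(y_h,x)\bigr]-\bigl[g(y_g)+\bar\rho D_\Phi(y_g,x)\bigr],\\
(\bar\rho-\rho)\,D_\Phi(y_g,y_h) &\leq \bigl[h(y_g)+\bar\rho D_\Phi(y_g,x)\bigr]-\bigl[h(y_h)+\bar\rho D_\Phi(y_h,x)\bigr].
\end{align*}
Adding the two inequalities causes the Bregman divergence terms involving $x$ to cancel exactly, leaving $(\bar\rho-\rho)\bregsym(y_g,y_h)\leq [h(y_g)-g(y_g)]-[h(y_h)-g(y_h)]\leq u-l$ by hypothesis \eqref{eqn:uniform_closeness}. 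This establishes \eqref{eqn:breg_close_root}.

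For \eqref{eqn:breg_close_root_triangle}, I appeal to the gradient formula from Theorem~\ref{thm:env_diff}, which gives $\nabla g_{1/\bar\rho}^\Phi(x)-\nabla h_{1/\bar\rho}^\Phi(x)=\bar\rho\,\nabla^2\Phi(x)(y_h-y_g)$. By the definition of the local dual norm, this implies $\|\nabla g_{1/\bar\rho}^\Phi(x)-\nabla h_{1/\bar\rho}^\Phi(x)\|_x^{*}=\bar\rho\|y_h-y_g\|$. The $\alpha$-strong convexity of $\Phi$ with respect to $\|\cdot\|$ gives $\alpha\|y_g-y_h\|^2\leq \bregsym(y_g,y_h)$ (both Bregman terms are bounded below by $\tfrac{\alpha}{2}\|y_g-y_h\|^2$). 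Combining with \eqref{eqn:breg_close_root} yields the desired bound.

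For the Hausdorff estimate \eqref{eqn:subdiff_close} in the Euclidean setting, I exploit the linear isomorphism between the subdifferential graph and the graph of the proximal map, already recorded in the paper: $y=\prox_{g/\bar\rho}(x)\iff (y,\bar\rho(x-y))\in\gph\partial g$. Given any $(y,v)\in\gph\partial g$, define $x:=y+\bar\rho^{-1}v$, so that $y=\prox_{g/\bar\rho}(x)$, and set $y':=\prox_{h/\bar\rho}(x)$, $v':=\bar\rho(x-y')\in\partial h(y')$. In the Euclidean case $\bregsym(y,y')=\|y-y'\|_2^2$, so \eqref{eqn:breg_close_root} forces $\|y-y'\|_2\leq \sqrt{(u-l)/(\bar\rho-\rho)}$, and by construction $\|v-v'\|_2=\bar\rho\|y-y'\|_2$. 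Hence $\max\{\|y-y'\|_2,\bar\rho^{-1}\|v-v'\|_2\}\leq\sqrt{(u-l)/(\bar\rho-\rho)}$, so every point of $\gph\partial g$ lies within the claimed distance of $\gph\partial h$; swapping the roles of $g$ and $h$ gives the reverse inclusion.

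I do not anticipate any serious obstacle: the entire argument is powered by the cancellation observed in the sum of the two strong-convexity inequalities, which replaces a deviation in proximal points by a deviation in function values. The only point deserving care is the bookkeeping for the local dual norm in \eqref{eqn:breg_close_root_triangle}, where one must recognize that the Hessian $\nabla^2\Phi(x)$ and its inverse precisely cancel, so that the dual norm of $\nabla^2\Phi(x)(y_h-y_g)$ reduces to $\|y_h-y_g\|$ in the primal norm.
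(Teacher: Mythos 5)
Your proposal is correct and follows essentially the same argument as the paper: the paper's proof of \eqref{eqn:breg_close_root} is a four-step chain of inequalities that is algebraically identical to summing your two strong-convexity inequalities and invoking \eqref{eqn:uniform_closeness}, and your treatments of \eqref{eqn:breg_close_root_triangle} (via the gradient formula of Theorem~\ref{thm:env_diff} and $\alpha$-strong convexity of $\Phi$) and of \eqref{eqn:subdiff_close} (via the prox--subdifferential graph isomorphism) match the paper's, with your write-up of the dual-norm bookkeeping simply being more explicit.
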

\begin{proof} Fix a  point $x\in \inter(\dom\Phi)$ and define 
	$x_g:=\prox_{g/\bar\rho}^{\Phi}(x)$ and $x_h:=\prox_{h/\bar\rho}^{\Phi}(x)$. We successively deduce
	\begin{align}
	g(x_g)+\bar\rho D_{\Phi}(x_g,x)&\leq \left(g(x_h)+\bar\rho D_{\Phi}(x_h,x)\right)-(\bar\rho-\rho)D_{\Phi}(x_h,x_g)\label{eqn:sconv1}\\
	&\leq h(x_h)+\bar\rho D_{\Phi}(x_h,x)-(\bar\rho-\rho)D_{\Phi}(x_h,x_g)-l\label{eqn:useupperlower1}\\
	&\leq h(x_g)+\bar\rho D_{\Phi}(x_g,x)-(\bar\rho-\rho)\bregsym(x_h,x_g)-l\label{eqn:sconv2}\\
	&\leq g(x_g)+\bar\rho D_{\Phi}(x_g,x)-(\bar\rho-\rho)\bregsym(x_h,x_g)+(u-l),\label{eqn:useupperlower2}
	\end{align}
	where \eqref{eqn:sconv1} and \eqref{eqn:sconv2} follow from strong convexity of $g(\cdot)+\bar\rho D_{\Phi}(\cdot,x)$ and $h(\cdot)+\bar\rho D_{\Phi}(\cdot,x)$, respectively, while \eqref{eqn:useupperlower1} and \eqref{eqn:useupperlower2} follow from the assumption \eqref{eqn:uniform_closeness}. Rearranging immediately yields \eqref{eqn:breg_close_root}. The inequality \eqref{eqn:breg_close_root_triangle} follows directly from \eqref{eqn:breg_close_root} and Theorem~\ref{thm:env_diff}.
	
	Consider now the Euclidean setting $\Phi=\frac{1}{2}\|\cdot\|^2_2$ and fix an arbitrary pair $(x,v)\in \gph \partial g$. A quick computation shows then $x=\prox_{g/\bar\rho}(x+\frac{1}{\bar\rho}v)$. Define now $x':=\prox_{h/\bar\rho}(x+\frac{1}{\bar\rho}v)$ and $v'=\bar\rho(x-x'+\frac{1}{\bar\rho}v)$, and note the inclusion $v'\in \partial h(x')$. Appealing to \eqref{eqn:breg_close_root}, we therefore deduce $\|x'-x\|_2\leq \sqrt{\frac{u-l}{\bar\rho-\rho}}$ and $\|v'-v\|_2=\bar{\rho}\|x-x'\|_2\leq \bar \rho  \sqrt{\frac{u-l}{\bar\rho-\rho}}$. We have thus shown $\dist_{1/\bar \rho}((x,v),\gph\partial h)\leq \sqrt{\frac{u-l}{\bar\rho-\rho}}.$ A symmetric argument reversing the roles of $f$ and $g$ completes the proof of \eqref{eqn:subdiff_close}.
\end{proof}

Note that simple examples of uniformly close functions, such as $h(x) = \delta \sin (\delta^{-1/2} x)$ and $g(x) = 0$, show that the guarantee of Theorem~\ref{thm:compar} is tight. 

In a typical application of Theorem~\ref{thm:compar} to subgradient estimation, one might set $h$ to be the population risk and $g$ to be the empirical risk or vice versa. The attractive feature of this approach is that it completely decouples probabilistic arguments (for proving functional convergence) from variational analytic arguments (for proving graphical convergence of subdifferentials). The following two sections illustrate the use of Theorem~\ref{thm:compar} on two examples: learning with generalized linear models (Section~\ref{sec:multiclass}) and robust nonlinear regression (Section~\ref{sec:robust_nonlinear}).

	\subsection{Illustration I: Dimension Independent Rates for Generalized Linear Models}\label{sec:multiclass}

In this section, we develop \emph{dimension-independent} convergence guarantees for a wide class of generalized linear models. We consider a loss functions $f : \RR^d \times \Omega \rightarrow \Omega$ over a bounded set $\cX$, where $f(x, z)$ has the parametric form
$$
f(x, z) = \ell(\dotp{x, \phi_1(z)}, \ldots, \dotp{x, \phi_K(z)},z),
$$
Here $\ell \colon \RR^K\times\Omega \rightarrow \RR$ is a loss function and $\phi_1, \ldots, \phi_K$ are feature maps. For simplicity, the results of this section are only derived in the $\ell_2$ norm, though some of the results hold in greater generality. We make the following assumptions:
\begin{enumerate}
\item[(C1)] (\textbf{Norms}) We equip $\RR^d$ and $\RR^K$ with the $\ell_2$ norms $\|\cdot\|_2$. 
\item[(C2)] (\textbf{Region of Convergence}) We assume that $\cX \subseteq \RR^d$ is a closed set containing a point $x_0 \in \cX$. We assume that $\sup_{ x\in \cX} \| x - x_0\|_2 \leq B$ for a constant $B > 0$.
\item[(C3)] (\textbf{Feature Mapping}) The feature maps $\phi_k \colon \Omega \rightarrow \RR^d$ are measurable for $k = 1, \ldots, K$.
\item[(C4)] (\textbf{Loss Function and Regularizer}) $\ell \colon \RR^K \times \Omega \rightarrow \RR$ is a measurable function. We assume that for each $z \in \Omega$, the function $\ell(\cdot, z)$ is $L(z)$-Lipschitz over the set $$\{ (\dotp{x, \phi_1(z)}, \ldots, \dotp{x, \phi_K(z)}) \mid x \in \cX\}$$ for a  measurable map $ L \colon\Omega \rightarrow \RR_+$. The function $r \colon \RR^d \rightarrow \RR\cup \{\infty\}$ is lower semi-continuous.
\end{enumerate}
Then we have the following theorem, whose proof is Presented in Appendix~\ref{sec:radamacher} 
	\begin{thm}[Dimension Independent Functional Concentration]\label{thm:glm_main}
	Let $z_1, \ldots, z_n, z, z'$ be an i.i.d sample from $P$ and define the random variable
	$$
	Y = \left[ |f(x_0, z) - f(x_0, z')|+ BL(z) \sqrt{\sum_{k=1}^K\|\phi_k(z)\|^2} + BL(z') \sqrt{\sum_{k=1}^K\|\phi_k(z')\|^2}\right].
	$$ 
	Then under assumptions (C1)-(C4), with probability 
	$$1 - 2\exp\left(-m\psi^\star_{\varepsilon Y}(t)\right),$$
	 we have following bound:
	\begin{align*}
	\sup_{x \in \cX} \left| \frac{1}{m}\sum_{i=1}^m f(x, z_i)  - \EE\left[ f(x, z)\right]\right| \leq 2\sqrt{\frac{2B^2K \displaystyle\max_{k=1, \ldots, K}\EE_z\left[ L(z)^2\|\phi_k(z)\|_2^2\right]}{ m}} + t.
	\end{align*}
	\end{thm}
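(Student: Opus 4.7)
The plan is to establish the bound by combining a symmetrization argument controlled by Maurer's vector contraction inequality (for the expected supremum) with the extended McDiarmid inequality of Theorem~\ref{thm:McDiarmid} (for concentration around that expectation). Write $g(S) := \sup_{x \in \cX} \bigl| \tfrac{1}{m}\sum_{i=1}^m f(x, z_i) - \EE f(x, z)\bigr|$.

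The first step is to verify the bounded difference property required by Theorem~\ref{thm:McDiarmid}: replacing $z_i$ by $z_i'$ changes $g(S)$ by at most $Y(z_i,z_i')/m$. For any $x \in \cX$, inserting and subtracting $f(x_0, z_i)$ and $f(x_0, z_i')$ and invoking the Lipschitz assumption (C4) with Cauchy--Schwarz on the vector $(\langle x-x_0,\phi_k(z)\rangle)_{k=1}^K$ yields
$$|f(x,z) - f(x_0,z)| \le L(z)\,\bigl\|(\langle x-x_0,\phi_k(z)\rangle)_{k=1}^K\bigr\|_2 \le B L(z) \sqrt{\textstyle\sum_{k=1}^K \|\phi_k(z)\|_2^2}.$$
Taking the supremum over $x$ delivers the required bound with $\omega_i = Y(z_i,z_i')/m$. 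Applying Theorem~\ref{thm:McDiarmid} separately to the two one-sided deviation functions $\sup_x \bigl(\pm(\tfrac{1}{m}\sum f(x,z_i)-\EE f(x,z))\bigr)$ and taking a union bound gives $\PP\bigl(g(S) - \EE g(S) > t\bigr) \le 2\exp\bigl(-m\psi^\star_{\varepsilon Y}(t)\bigr)$, where the rescaling identity $m\psi^\star_{\varepsilon(Y/m)}(t/m) = m\psi^\star_{\varepsilon Y}(t)$ (already exploited in the proof of Theorem~\ref{thm:prox_close_p}) produces the advertised exponent.

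The core of the argument is bounding $\EE g(S)$ by $2\sqrt{2B^2 K \max_k \EE[L(z)^2\|\phi_k(z)\|_2^2]/m}$. Standard symmetrization gives $\EE g(S) \le 2\,\EE_{S,\varepsilon}\sup_{x\in\cX}\tfrac{1}{m}\sum_i \varepsilon_i f(x,z_i)$; subtracting the $x$-independent quantity $\tfrac{1}{m}\sum_i \varepsilon_i f(x_0,z_i)$ has zero expectation and centers the problem. Now invoke Maurer's vector contraction inequality for Rademacher complexities: each $\ell(\cdot, z_i)$ is $L(z_i)$-Lipschitz on $\RR^K$ and is composed with the linear map $x \mapsto (\langle x,\phi_k(z_i)\rangle)_{k=1}^K$, so the contraction replaces the supremum above by $\sqrt{2}$ times
$$\EE_\varepsilon \sup_{x \in \cX}\, \Bigl\langle x - x_0,\,\sum_{i=1}^m\sum_{k=1}^K \varepsilon_{i,k} L(z_i)\phi_k(z_i)\Bigr\rangle \le B\,\EE_\varepsilon\Bigl\|\sum_{i,k} \varepsilon_{i,k} L(z_i)\phi_k(z_i)\Bigr\|_2.$$
Jensen's inequality, followed by independence of the Rademacher variables and the bound $\sum_k \EE[L(z)^2\|\phi_k(z)\|_2^2] \le K \max_k \EE[L(z)^2\|\phi_k(z)\|_2^2]$, then produces the desired estimate after dividing by $m$.

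The main obstacle is the correct deployment of Maurer's vector contraction: a naive scalar Talagrand contraction applied coordinate-by-coordinate to $\ell(\cdot,z)$ would cost an extra factor of $K$ (or, worse, a dependence on the ambient dimension $d$), precisely destroying the dimension-independent rate that motivates the section. Once Maurer's inequality is set up, all remaining steps are routine: the bounded-difference verification is immediate from (C4), and the final high-probability inequality follows by combining the two paragraphs via the triangle inequality $g(S) \le \EE g(S) + (g(S) - \EE g(S))$.
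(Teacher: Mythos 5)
Your proposal is correct and follows essentially the same route as the paper: the bounded-difference verification with the random bound $Y$ via (C4) and Cauchy--Schwarz, the extended McDiarmid inequality applied to the two one-sided suprema, and symmetrization followed by Maurer's vector contraction and the linear-class Rademacher bound to control the expectation. The paper merely packages these steps into the auxiliary results of Appendix~\ref{sec:radamacher} (Theorem~\ref{thm:bound_sup}, Lemma~\ref{lem:rad_comp_lin_class}, Theorem~\ref{thm:vector_contraction}), which you have correctly inlined.
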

	
Thus far we have not assumed any weak convexity of the function $f(\cdot, z)$. In order to prove concentration of the subdifferential graphs, we now explicitly make this assumption: 
\begin{enumerate}
\item [(C5)] (\textbf{Weak Convexity With High Probability}) There exists a constant $\rho > 0$ and a probability $p_m\in [0, 1]$ such that with probability $1-p_m$ over the sample $S = \{z_1, \ldots, z_m\}$, the functions
\begin{align*}
	\varphi(x) := \EE\left[ f(x, z) \right] + r(x) + \iota_{\cX}(x) && \text{ and } && \varphi_S(x) := f_S(x) + r(x) + \iota_{\cX}(x).
\end{align*}
are $\rho$-weakly convex relative to $\Phi(x) = \frac{1}{2}\|x\|_2^2$.
\end{enumerate}
Given these assumptions, we may deduce subdifferential convergence with Theorem~\ref{thm:compar}---the main result of this section.
\begin{cor}[Dimension Independent Rates for GLMs]\label{cor:dim_indep_rateGLM}
	Assume the setting of Theorem~\ref{thm:glm_main} and Assumptions (C1)-(C5).  
	Let $z_1, \ldots, z_m, z, z'$ be an i.i.d. sample from $P$ and define the random variable
	$$
	Y =  2L(z)B \sqrt{\sum_{k=1}^K\|\phi_k(z)\|_2^2}.
	$$ 
	 Then with probability 
	$$1 - 2\exp\left(-m\psi^\star_{\varepsilon Y}(t)\right)-p_m,$$
	we have the following bound:
	\begin{align*}
	\sup_{x\in \cX}\,\|\nabla \varphi_{1/2\rho}-\nabla (\varphi_S)_{1/2\rho}(x)\|&\leq \sqrt{\frac{\bar\rho}{\bar \rho-\rho}}\cdot \sqrt{\sqrt{\frac{32B^2K \displaystyle\max_{k=1, \ldots, K}\EE_z\left[ L(z)^2\|\phi_k(z)\|_2^2\right]}{ m}} + 2t},	\\	
	\dist_{1/\bar\rho}(\gph \partial \varphi, \gph\partial \varphi_S)&\leq \frac{1}{\sqrt{\bar\rho-\rho}}\cdot\sqrt{\sqrt{\frac{32B^2K \displaystyle\max_{k=1, \ldots, K}\EE_z\left[ L(z)^2\|\phi_k(z)\|_2^2\right]}{ m}} + 2t},
	\end{align*}
	where the Hausdorff distance $\dist_{1/\bar \rho}(\cdot,\cdot)$ is induced by the norm $(x,v)\mapsto \max\left\{\|x\|_2,\frac{1}{\bar \rho}\|v\|_2\right\}$.

	\end{cor}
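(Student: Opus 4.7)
The plan is to compose the functional concentration bound of Theorem~\ref{thm:glm_main} with the deterministic variational bound of Theorem~\ref{thm:compar}. The appeal of this route is that it cleanly separates the probabilistic content (uniform closeness of values) from the nonsmooth analytic content (passage from value closeness to gradient/graph closeness), which is precisely the philosophy advertised in the paragraph introducing Section~\ref{sec:dim_indep}.

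Concretely, I would first invoke Theorem~\ref{thm:glm_main} to conclude that, with probability at least $1-2\exp(-m\psi_{\varepsilon Y}^{\star}(t))$,
$$
\sup_{x\in\cX}\,|f_S(x)-f(x)| \;\le\; 2\sqrt{\tfrac{2B^2 K\max_{k}\EE[L(z)^2\|\phi_k(z)\|_2^2]}{m}}+t \;=:\;\delta.
$$
Since $\varphi=f+r+\iota_{\cX}$ and $\varphi_S=f_S+r+\iota_{\cX}$, the regularizer and indicator cancel on $\cX\cap\dom r$, so the same $\delta$ bounds $\sup_{x}|\varphi_S(x)-\varphi(x)|$ on the common effective domain. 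The single-sample random variable $Y=2BL(z)\sqrt{\sum_k\|\phi_k(z)\|_2^2}$ used in the corollary dominates the two-sample quantity $\omega_i(z_i,z_i')$ driving the bounded-difference step in Theorem~\ref{thm:glm_main} in the symmetrized sense $\omega(z,z')\le Y(z)+Y(z')$, so the tail estimate transfers cleanly through McDiarmid (Theorem~\ref{thm:McDiarmid}) after a minor re-bookkeeping.

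Next, I would condition on the event of probability at least $1-p_m$ provided by (C5), on which $\varphi$ and $\varphi_S$ are both $\rho$-weakly convex relative to $\Phi(x)=\tfrac12\|x\|_2^2$ (so $\alpha=1$). I then apply Theorem~\ref{thm:compar} to the pair $g=\varphi$, $h=\varphi_S$ with $\mathcal{D}=\cX\cap\dom r$, $l=-\delta$, $u=\delta$, so that $u-l=2\delta$. The gradient bound \eqref{eqn:breg_close_root_triangle} yields
$$
\sup_{x\in\cX}\|\nabla\varphi_{1/\bar\rho}(x)-\nabla(\varphi_S)_{1/\bar\rho}(x)\|_2 \;\le\; \sqrt{\tfrac{\bar\rho^2\cdot 2\delta}{\bar\rho-\rho}},
$$
and \eqref{eqn:subdiff_close} yields the Hausdorff estimate $\dist_{1/\bar\rho}(\gph\partial\varphi,\gph\partial\varphi_S)\le\sqrt{2\delta/(\bar\rho-\rho)}$. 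Substituting $2\delta=4\sqrt{2B^2K\max_k\EE[L^2\|\phi_k\|_2^2]/m}+2t=\sqrt{32B^2K\max_k\EE[L^2\|\phi_k\|_2^2]/m}+2t$ produces exactly the quantities under the outer square root in the statement of the corollary.

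A union bound over the two bad events (functional concentration failing, or weak convexity failing) gives the claimed probability $1-2\exp(-m\psi^{\star}_{\varepsilon Y}(t))-p_m$. I do not see a real obstacle in this argument; it is essentially a direct composition. The only subtlety worth flagging is the constant matching between the $Y$ of Theorem~\ref{thm:glm_main} and the simpler $Y$ of the corollary, which forces one either to redo the bounded-difference step in McDiarmid with the dominant $2BL(z)\sqrt{\sum_k\|\phi_k(z)\|_2^2}$ directly, or to absorb the cross term $|f(x_0,z)-f(x_0,z')|$ after a harmless normalization (which does not affect subdifferentials). This is really the only piece of careful bookkeeping; the rest is mechanical application of Theorems~\ref{thm:compar} and~\ref{thm:glm_main}.
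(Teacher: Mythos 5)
Your proposal is correct and follows essentially the same route as the paper: Theorem~\ref{thm:glm_main} for uniform closeness of values, Theorem~\ref{thm:compar} with $u-l=2\delta$ on the weak-convexity event from (C5), and a union bound. The one bookkeeping subtlety you flag is resolved in the paper by exactly your second option---applying Theorem~\ref{thm:glm_main} to the shifted loss $\bar f(x,z)=f(x,z)-f(x_0,z)$, which makes the $|f(x_0,z)-f(x_0,z')|$ term in $Y$ vanish while leaving all subdifferentials unchanged; note that your first option (dominating the two-sample quantity by the symmetrized $Y$ without shifting) does not go through, since $\sup_{x\in\cX}|f(x,z)-f(x,z')|$ is not controlled by $BL(z)\sqrt{\sum_k\|\phi_k(z)\|_2^2}+BL(z')\sqrt{\sum_k\|\phi_k(z')\|_2^2}$ alone.
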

	\begin{proof}
	We will apply Theorem~\ref{thm:glm_main} after shift. Namely set $$\bar l(s,z)=l(s,z)-l(\langle x_0,\phi_1(z)\rangle,\ldots,\langle x_0,\phi_K(z)\rangle )$$
	and define the loss
	$\bar f(x,z)=\bar l(\dotp{x, \phi_1(z)}, \ldots, \dotp{x, \phi_K(z)},z)$. Define now the functions $\bar \varphi(x) = \varphi(x)- \EE\left[f(x_0, z)\right]$ and $\bar \varphi_S = \varphi_S(x) - \frac{1}{m}\sum_{z \in S} f(x_0, z)$.
	Applying Theorem~\ref{thm:glm_main} to $\bar f(x,z)$, we deduce that with probability $1 - 2\exp\left(-m\psi^\star_{\varepsilon Y}(t)\right),$ we have
	\begin{align*}
	\sup_{x \in \cX} \left|\bar \varphi_S(x) - \bar \varphi(x) \right| \leq 2\sqrt{\frac{2B^2K \displaystyle\max_{k=1, \ldots, K}\EE\left[ L(z)^2\|\phi_k(z)\|_2^2\right]}{ m}} + t.
	\end{align*}
	Thus, due to assumption (C5), we may apply Theorem~\ref{thm:compar} to the functions $\bar \varphi(x)$ and $\bar \varphi_S$, noticing that $\partial \bar \varphi(x) = \partial \varphi(x)$ and $\partial \bar \varphi_S(x) = \partial \varphi_S(x)$, as desired.
	\end{proof}
 
 If the random variable $2L(z)B \sqrt{\sum_{k=1}^K\|\phi_k(z)\|_2^2}$ is subgaussian, we immediately obtain a dimension independent $m^{-1/4}$ rate of convergence. This is in stark contrast to all other results obtained in this paper.

\subsection{Illustration II: Landscape of Robust Nonlinear Regression.}\label{sec:robust_nonlinear}

In this section, we investigate a  robust nonlinear regression problem in $\RR^d$, using the techniques we have developed. Setting the stage, consider a function $\sigma\colon\R^d\times \Omega\to\R$ that is differentiable in its first component and let $\bar x$ be the ground truth. Our observation model is 
$$b(z,\delta,\xi)=\sigma\left(\langle \bar x,z\rangle, z\right)+\delta\xi,$$
where $z,\delta$ and $\xi$ are random variables. One should think of $z$ as the population data, $\delta$ as encoding presence or absence of an outlier, and $\xi$ as the size of the outlying measurement. Seeking to recover $\bar x$, we consider the formulation 
$$\min_{x\in \mathcal X}~ f(x,z):=\EE_{z,\delta,\xi}[|\sigma\left(\langle x,z\rangle, z\right)-b(z,\delta,\xi)|]$$
where the set $\mathcal{X}$ will soon be determined. We make the following assumptions on the data.
\smallskip

\begin{description}
	\item[{\rm (D1)}] (\textbf{Sufficient Support}) There exist constants $c, C > 0$, such 
	for all $x \in \RR^d$, we have
	$$
	C^2\|x\|^2_2 \geq \EE\left[|\dotp{x, z}|^2\right],  \qquad \EE\left[|\dotp{x, z}|\right] \geq c\|x\|_2 \qquad\text{ and } \qquad  P(\dotp{x, z} \neq 0) = 1.
	$$
	\item[{\rm(D2)}] (\textbf{Corruption Frequency}) $\delta$ is a $\{0, 1\}$-valued random variable. We define $$\pfail  :=P(\delta = 1),$$ which is independent from $z$ and $\xi$.
	\item[{\rm(D3)}] (\textbf{Finite Moment}) $\xi$ is a random variable with finite first moment. 
	\item[{\rm(D4)}] (\textbf{Lipschitz, Smooth, and Monotonic Link}) There exist constants $a > 1$ and $c_\sigma,C_\sigma>0$  satisfying $c_\sigma \leq \sigma'(u, z) \leq C_\sigma$ for all $u \in  \{ \dotp{x, z} \mid \|x\|_2 \leq a\|\bar x\|_2\}$ and $z\in \Omega$. In addition, for every $z \in \Omega$ the function $\sigma'(\cdot, z)$ is $L$-Lipschitz continuous.
	\item [{\rm(D5)}] (\textbf{Concentration}) Let $p_m\in [0,1]$ and $\tau_m>0$ be sequences  satisfying
	$$
	\mathbb{P}_S\left( \left\| \frac{1}{m} \sum_{z \in S}^m zz^T\right\|_{\text{op}}  \leq \tau_m \right)\geq  1- p_m.
	$$
	where $S = \{z_1, \ldots, z_m\}$ is an i.i.d. sample from $P$.
\end{description}
\smallskip

The noise model considered above allows for adversarial corruption, meaning that $\xi$ may take the form $\xi = \sigma(\dotp{x_0, z}, z) - \sigma(\dotp{\bar x, z}, z)$ for an arbitrary point $x_0$. This allows us to ``plant" a completely different signal in the measurements. The rest of the assumptions serve to make $\bar x$ identifiable from the measurements $\sigma(\dotp{\bar x, z}, z)$, as we will soon show. 

The goal of this section is to prove the following theorem, which shows that the empirical risk is well-behaved. In particular, the empirical risk is weakly convex and its stationary points cluster around $\bar x$.
\begin{thm}[Stationary Points of the Empirical Risk]\label{thm:example_main}
	Define $\cX = a\|\bar x\|_2 \bf B$. For any sample $S \subseteq \Omega$ of size $m$, set
	$$
	\varphi(x) := f(x) + \iota_{\cX}(x) \qquad\textrm{and}\qquad \varphi_S(x) := f_S(x) + \iota_{\cX}(x).
	$$
	Then $\varphi$ is $2LC^2$-weakly convex and with probability $1-p_m$ the function $\varphi_S$ is $2L\tau_m$-weakly convex. 
	Suppose now  $\pfail< \frac{c_{\sigma}c}{2C_{\sigma}C}$ and set 
	$$
	\rho = \max\{2LC^2, 2L\tau_m\}\qquad \textrm{and}\qquad D = c_\sigma c - 2\pfail C_\sigma C.
	$$
	Then whenever $t>0$ and $m$ satisfy
	\begin{align*}
	t \leq \frac{1}{256\rho}D^2&& \text{and}&& m \geq \frac{2^{21}\rho^2 C_\sigma^2a^2 \|\bar x\|^2_2\EE\left[ \|z\|^2_2\right]}{D^4},
	\end{align*}
	we have, with probability 
	$$
	1 - 2\exp\left(-m\psi^\star_{\varepsilon   \|z\|_2}\left(\frac{t}{2a\|\bar x\|_2 C_\sigma}\right)\right) - p_m,
	$$ that 
	any pair $(x, v) \in \gph \varphi_S$ satisfies at least one of the following: 
	\begin{enumerate}
		\item (\textbf{Near global optimality})
		$$
		\| x -\bar x\|_2 \leq \frac{16}{    D}\cdot \left(\sqrt{\frac{8a^2\|\bar x\|^2_2 C_\sigma^2\EE\left[ \|z\|^2_2\right]}{ m}} + t\right).
		$$

		\item (\textbf{Large Subgradient})
		$$
		\|v\|_2 \geq \frac{1}{2}D.
		$$
	\end{enumerate}
\end{thm}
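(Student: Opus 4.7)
The plan rests on three ingredients: a population landscape showing $\|v\|_2 \geq D$ for every $(x,v) \in \gph\partial\varphi$ with $x \in \cX\setminus\{\bar x\}$, uniform concentration of $\varphi_S$ around $\varphi$ from Theorem~\ref{thm:glm_main}, and the subdifferential-graph closeness from Theorem~\ref{thm:compar}. The weak-convexity claims follow immediately from the composition rule: the absolute value is $1$-Lipschitz convex, and $\sigma(\langle x,z\rangle,z)$ has Hessian bounded in operator sense by $Lzz^T$ (by D4). Taking expectation gives $\varphi$ weakly convex with modulus at most $2LC^2$ (by D1 on $\|\EE[zz^T]\|_{\rm op}$), and D5 yields the empirical analog with probability $1-p_m$.

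For the population landscape, I would use the chain rule for weakly convex compositions to express any $v \in \partial\varphi(x)$ as the sum of (i) an uncorrupted term $(1-\pfail)\EE_z[\sigma'(\langle x,z\rangle)\sign(\langle x-\bar x, z\rangle)\, z]$, whose sign is a.s.\ well-defined due to strict monotonicity of $\sigma$ (D4) and $P(\langle x-\bar x, z\rangle = 0) = 0$ (D1), (ii) a corrupted term $\pfail\EE_{z,\xi}[\sigma'(\langle x,z\rangle)s(z,\xi)\, z]$ for some measurable selection $s \in [-1,1]$, and (iii) a normal cone element from $N_\cX(x)$. Taking inner product with $x - \bar x$, term (i) contributes at least $(1-\pfail)c_\sigma c\|x-\bar x\|_2$ (by D4 and the lower bound in D1), term (ii) at least $-\pfail C_\sigma C\|x-\bar x\|_2$ (by D4 and Cauchy--Schwarz with D1), and term (iii) is nonnegative because $\langle x/\|x\|_2, x-\bar x\rangle \geq (a-1)\|\bar x\|_2 \geq 0$ on the boundary. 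Combining and using $c_\sigma c \leq C_\sigma C$ yields $\langle v, x-\bar x\rangle \geq D\|x-\bar x\|_2$, whence $\|v\|_2 \geq D$ by Cauchy--Schwarz. The same chain of inequalities also gives the functional sharpness $\varphi(x) - \varphi(\bar x) \geq D\|x-\bar x\|_2$ for every $x \in \cX$.

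Applying Theorem~\ref{thm:glm_main} with $K=1$, $\phi_1(z) = z$, $L(z) \equiv C_\sigma$, $x_0 = \bar x$, and $B = a\|\bar x\|_2$ yields $\sup_{x\in\cX}|\varphi(x) - \varphi_S(x)| \leq \epsilon := \sqrt{8a^2\|\bar x\|_2^2 C_\sigma^2\EE[\|z\|_2^2]/m} + t$ with the stated probability. Sharpness transfers: $\varphi_S(x) - \varphi_S(\bar x) \geq D\|x-\bar x\|_2 - 2\epsilon$. For any $(x,v) \in \gph\partial\varphi_S$ with $\|v\|_2 < D/2$, the weak-convexity subgradient inequality \eqref{eqn:lower_bound_breg} evaluated at $\bar x$ gives $\varphi_S(x) - \varphi_S(\bar x) \leq \|v\|_2\|x-\bar x\|_2 + (\rho/2)\|x-\bar x\|_2^2$; combining with the transferred sharpness produces $\rho r^2 - Dr + 4\epsilon > 0$ with $r := \|x-\bar x\|_2$. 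The hypotheses on $m$ and $t$ ensure $16\rho\epsilon \leq D^2$, so the quadratic has real roots, and rationalizing shows the smaller root satisfies $r_- = 8\epsilon/(D + \sqrt{D^2 - 16\rho\epsilon}) \leq 8\epsilon/D$. Hence either $r \leq r_-$, giving the announced near-optimality bound, or $r \geq r_+ \approx D/\rho$.

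To rule out $r \geq r_+$, I would invoke Theorem~\ref{thm:compar} (estimate \eqref{eqn:subdiff_close}) with $\bar\rho := 2\rho$, producing $(x',v') \in \gph\partial\varphi$ with $\max\{\|x-x'\|_2,\, \|v-v'\|_2/(2\rho)\} \leq \sqrt{2\epsilon/\rho}$. Then $\|v'\|_2 < D/2 + 2\sqrt{2\rho\epsilon} < D$ by the smallness of $\epsilon$, so the population landscape forces $x' = \bar x$; this gives $\|x-\bar x\|_2 \leq \sqrt{2\epsilon/\rho} < r_+$, contradicting $r \geq r_+$. The principal obstacle is the population landscape: carefully applying the chain rule through the expectation, handling the possibly set-valued sign on the corrupted-zero set via a measurable selection and the normal cone contribution from $\iota_\cX$, and extracting the exact constant $D = c_\sigma c - 2\pfail C_\sigma C$ via $(1-\pfail)c_\sigma c - \pfail C_\sigma C \geq c_\sigma c - 2\pfail C_\sigma C$, which rests on the inequality $c_\sigma c \leq C_\sigma C$ implied by combining D1 and D4.
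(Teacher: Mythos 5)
Your proposal is essentially correct and shares the paper's skeleton: the same weak-convexity computation, the same population landscape bound $\langle v,x-\bar x\rangle\geq D\|x-\bar x\|_2$ (including the normal-cone term on the boundary of $a\|\bar x\|_2{\bf B}$), the same concentration input from Theorem~\ref{thm:glm_main}, and Theorem~\ref{thm:compar} for the transfer. Where you genuinely diverge is the endgame. The paper transfers the landscape \emph{purely} through the graphical estimate \eqref{eqn:subdiff_close}, choosing $\bar\rho=\gamma^2/\eta+\rho$ so that the two coordinates of the graph distance are balanced, with $\gamma\geq D/4$; then either the nearby population point is $\bar x$ (near-optimality) or its subgradient has norm at least $D$ (large subgradient). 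You instead first upgrade the subgradient bound to the sharp-growth inequality $\varphi(x)-\varphi(\bar x)\geq D\|x-\bar x\|_2$, transfer it to $\varphi_S$ via uniform closeness of values, combine with the weak-convexity inequality \eqref{eqn:lower_bound_breg} to get the quadratic $\rho r^2-Dr+4\epsilon\geq 0$, and use the graphical distance only to exclude the outer root. Both routes yield the stated $O(\epsilon/D)$ radius under the stated hypotheses on $t$ and $m$; yours requires the additional (standard, but unproved in your sketch) step of integrating the subgradient inequality along the segment to obtain sharpness, while the paper's avoids function values of $\varphi_S$ entirely. One detail to fix: you invoke Theorem~\ref{thm:glm_main} with $x_0=\bar x$ and claim $\sup_{x\in\cX}|\varphi(x)-\varphi_S(x)|\leq\epsilon$. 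The random variable $Y$ there contains the term $|f(x_0,z)-f(x_0,z')|$, which for $x_0=\bar x$ equals $\bigl||\delta\xi|-|\delta'\xi'|\bigr|$ and is not controlled by the stated probability (D3 grants $\xi$ only a first moment); also $\sup_{x\in\cX}\|x-\bar x\|_2$ can be as large as $(a+1)\|\bar x\|_2$. The remedy is the one used in Corollary~\ref{cor:dim_indep_rateGLM}: shift the loss by $f(x_0,\cdot)$ (the paper takes $x_0=0$) and note that every quantity you actually use---$\varphi_S(x)-\varphi_S(\bar x)$, $\varphi(x)-\varphi(\bar x)$, and the subdifferentials---is invariant under this additive shift, so your argument survives with only constant-factor adjustments.
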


Let us briefly examine Assumptions (D1)-(D5) and the conclusion of the theorem in the case of a Gaussian population $z \sim N(0, I_{d \times d})$. Assumption (D1) holds true with $C = 1$ and $c = \sqrt{2/\pi}$. Assumption (D2)-(D4) are independent of the distribution of $z$. Assumption (D5) holds true with 
\begin{align*}
\tau_m  = 4 + \frac{d}{m}+4\sqrt{\frac{d}{m}} && \text{and} && p_m = 2\exp(-m/2),
\end{align*}
by Corollary~\cite[Corollary 5.35]{vershynin2010introduction}. Thus, assumption (D1)-(D5) are satisfied. Now we examine the various quantities included in the theorem. 

The expected squared norm of a gaussian is  $\EE\left[\|z\|^2_2\right] = d$. One can also show, using standard probabilistic techniques, that the moment generating function satisfies the bound 
$$
\psi_{\varepsilon \|z\|_2} (t) \leq  \frac{d\kappa t^2}{2},
$$
for a numerical constant $\kappa > 0$. Thus, we find that $ \psi^\star_{\varepsilon   \|z\|_2}(t) \geq \frac{ t^2}{2d\kappa}$. Therefore, by equating 
$$
\frac{\delta}{2} = \exp\left(\frac{-mt^2}{2d\kappa(2a\|\bar x\|_2 C_\sigma)^2}\right)
$$
and solving for $t$, we find that with probability $1-\delta - p_m$, every pair $(x, v)\in \gph \varphi_S$ satisfies 
\begin{align*}
\|x - \bar x\|_2 = O\left(  \sqrt{\frac{ a^2 \|\bar x\|^2_2 C_\sigma^2d}{m}\log\left(\frac{1}{\delta}\right)}\right) && \text{ or } && \|v\|_2 \geq \frac{1}{2}\left(c_\sigma \sqrt{\frac{2}{\pi}} - 2\pfail C_\sigma \right).
\end{align*}
Interestingly, although Theorem~\ref{thm:compar} in general provides rates of convergence that scale as $m^{-1/4}$ as shown in Corollary~\ref{cor:dim_indep_rateGLM}, we obtain standard statistical rates of convergence for $\|\bar x - x\|_2$.  This would not be possible with a direct application of Theorem~\ref{thm:prox_close_p}, as we would obtain rates that scale as $\sqrt{d^2/m}$. Finally, we note that for this bound to be useful, we must have corruption frequency $\pfail$ strictly less than $\frac{c_{\sigma}}{C_{\sigma}}\sqrt{\frac{1}{2\pi}}$.

We now present the proof of Theorem~\ref{thm:example_main}.
\begin{proof}[Proof of Theorem~\ref{thm:example_main}]
	
	Although $\varphi$ is nonsmooth and nonconvex, it is fairly well-behaved. We first show that $\varphi$ and $\varphi_S$ are both weakly convex. 
	
	\begin{claim}[Weak Convexity]\label{lem:robust_weak}
		The functions $f$ and $\varphi$ are $2LC^2$-weakly convex. Moreover with probability $1-p_m$ the functions $f_S$ and $\varphi_S$ are $2L\tau_m$-weakly convex.
	\end{claim}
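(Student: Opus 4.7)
The plan is to exploit the composite structure of the loss. For each fixed $(z,\delta,\xi)$, write the pointwise loss as $h \circ c$, where $h(u) := |u - b(z,\delta,\xi)|$ is convex and $1$-Lipschitz, and $c(x) := \sigma(\langle x, z\rangle, z)$ is continuously differentiable in $x$ with $\nabla c(x) = \sigma'(\langle x, z\rangle, z)\,z$. The crucial refinement, compared with the generic composite bound of the form $\ell\beta$ with $\beta = L\|z\|_2^2$, is to track the Taylor remainder of $c$ \emph{along the direction~$z$}: using Lipschitz continuity of $\sigma'(\cdot, z)$ with constant $L$ and the integral form of the remainder,
\begin{equation*}
\bigl| c(y) - c(x) - \langle \nabla c(x), y - x\rangle \bigr| \leq \tfrac{L}{2}\langle z, y - x\rangle^2.
\end{equation*}
Combining this with the convexity and the $1$-Lipschitz property of $h$ and the subgradient inequality for $h$ yields, for every $v \in \partial_x f(x, z, \delta, \xi)$, the pointwise weak convexity estimate
\begin{equation*}
f(y, z, \delta, \xi) \geq f(x, z, \delta, \xi) + \langle v, y - x\rangle - \tfrac{L}{2}\langle z, y - x\rangle^2,
\end{equation*}
or equivalently, convexity of the map $x \mapsto f(x, z, \delta, \xi) + \tfrac{L}{2}\langle z, x\rangle^2$.

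The second step passes from this pointwise statement to the population and empirical risks by averaging. Taking expectation over $(z, \delta, \xi)$ preserves convexity, so $x \mapsto f(x) + \tfrac{L}{2}\, x^T \EE[zz^T]\, x$ is convex. Assumption (D1) gives $\EE[zz^T] \preceq C^2 I$, so the convex quadratic $\tfrac{L}{2}\, x^T (C^2 I - \EE[zz^T])\, x$ may be added to preserve convexity, yielding that $f(x) + L C^2 \|x\|_2^2/2$ is convex, hence $f$ is $LC^2$-weakly convex (a fortiori $2LC^2$-weakly convex). The identical argument applied to the sample average yields convexity of $f_S(x) + \tfrac{L}{2}\, x^T M_S\, x$ with $M_S := \tfrac{1}{m}\sum_{z \in S} zz^T$; invoking the operator-norm bound of assumption (D5) on the event of probability at least $1 - p_m$ analogously promotes this to $L\tau_m$-weak convexity of $f_S$.

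Since $\cX = a\|\bar x\|_2\,{\bf B}$ is convex, its indicator is convex, and so $\varphi = f + \iota_{\cX}$ and $\varphi_S = f_S + \iota_{\cX}$ inherit the corresponding weak convexity constants. The main obstacle is the sharp pointwise bound in the first step: the crude composite estimate would control the remainder by $\tfrac{L\|z\|_2^2}{2}\|y-x\|_2^2$ and, after averaging, would only produce $L\,\EE[\|z\|_2^2]$-weak convexity, a \emph{dimension-dependent} constant. The dimension-independent constant $LC^2$ arises precisely because the Taylor remainder can be tracked by $\langle z, y - x\rangle^2$, so that only the operator norm of the second-moment matrix enters via (D1); once this refinement is in place, everything reduces to routine manipulations of convex functions and the PSD ordering.
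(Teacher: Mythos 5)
Your proof is correct and follows essentially the same route as the paper's: both arguments hinge on bounding the Taylor remainder of $x\mapsto\sigma(\langle x,z\rangle,z)$ by a multiple of $\langle z,y-x\rangle^2$ rather than $\|z\|_2^2\|y-x\|_2^2$, so that only the second-moment matrix enters via (D1) and (D5). The paper phrases this as constructing a $2LC^2$-weakly convex global minorant of $f$ touching at each point (via the mean value theorem), whereas you convexify each per-sample loss and average; your integral-remainder bookkeeping even yields the slightly sharper constant $LC^2$, which of course implies the claimed $2LC^2$.
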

	\begin{proof}[Proof of Claim~\ref{lem:robust_weak}] 
		For any fixed $x, z, \xi, \delta$, by the mean value theorem, there exists $\eta$ in the interval $[\dotp{x, z}, \dotp{y, z}]$, so that for all $y \in \RR^d$, we have
		\begin{equation}\label{eqn:weak_conv_arg}
		\begin{aligned}
		&|\sigma(\dotp{y, z}, z) - \sigma(\dotp{\bar x, z}, z) + \xi \cdot \delta|\\
		&= |\sigma(\dotp{x, z}, z) + \sigma'(\eta, z)\dotp{y -  x, z} - \sigma(\dotp{\bar x, z}, z) + \xi \cdot \delta|\\
		&\geq |\sigma(\dotp{x, z}, z) + \sigma'(\dotp{x, z}, z)\dotp{y -  x, z} - \sigma(\dotp{\bar x, z}, z) + \xi \cdot \delta| \\
		&~~- |\sigma'(\eta, z) - \sigma'(\dotp{x, z}, z)||\dotp{y -  x, z}|\\
		&\geq |\sigma(\dotp{x, z}, z) + \sigma'(\dotp{x, z}, z)\dotp{y -  x, z} - \sigma(\dotp{\bar x, z}, z) + \xi \cdot \delta| - L|\dotp{y -  x, z}|^2.
		\end{aligned}
		\end{equation}
		Therefore, taking expectations we deduce
		\begin{align*}
		f(y ) &\geq \EE\left[|\sigma(\dotp{x, z}, z) + \sigma'(\dotp{x, z}, z)\dotp{y -  x, z} - \sigma(\dotp{\bar x, z}, z) + \xi \cdot \delta|\right] - L\EE\left[|\dotp{y -  x, z}|^2\right]\\
		&\geq \EE\left[|\sigma(\dotp{x, z}, z) + \sigma'(\dotp{x, z}, z)\dotp{y -  x, z} - \sigma(\dotp{\bar x, z}, z) + \xi \cdot \delta|\right] - LC^2\|y - x\|^2_2.
		\end{align*}
		Notice that the right rand-side is a $2LC^2$-weakly convex function in $y$. We have thus deduced that for every $x$, there is a $2LC^2$-weakly convex function that globally lower bounds $f(\cdot)$ while agreeing with it at $x$. Therefore $f$ is $2LC^2$-weakly convex, as claimed.

		Next, using~\eqref{eqn:weak_conv_arg} yields the inequality:
		\begin{align*}
		f_S(y) &\geq \frac{1}{m} \sum_{z \in S} |\sigma(\dotp{x, z}, z) + \sigma'(\dotp{x, z}, z)\dotp{y -  x, z} - \sigma(\dotp{\bar x, z}, z) + \xi \cdot \delta|- \frac{L}{m} \sum_{z \in S}|\dotp{y -  x, z}|^2.
		\end{align*}
		Finally, notice with probability $\tau_m>0$ we get the upper bound:
		\begin{align*}
		\frac{L}{m} \sum_{z \in S}|\dotp{y -  x, z}|^2 \leq L  \left\|\frac{1}{m} \sum_{z \in S} zz^T\right\|_{\text{op}}\cdot \|y - x\|^2_2 \leq  \tau_m \cdot L\|y - x\|^2_2.
		\end{align*}
		By the same reasoning as for the population objective, we deduce that $f_S$ is $2L\tau_m$-weakly convex with probability $p_m$, as claimed. 
	\end{proof}
	
	Having established weak convexity, we now lower bound the subgradients of $f$ and show that for all $x \neq \bar x$, the negative subgradients of $f$ always point toward $\bar x$. In particular, the point $\bar x$ is the unique stationary point of $f$. 
	\begin{claim}[Stationarity conditions for $f$]\label{lem:grad_robust_bound}
		For every $x \neq \bar x$ and $v \in \partial f(x)$, we have
		$$
		(c_\sigma c - 2\pfail C_\sigma C)\cdot \|x - \bar x\|_2 \leq \dotp{ v, x - \bar x},
		$$
		and consequently
		$$
		\|v\|_2 \geq c_\sigma c - 2\pfail C_\sigma C.
		$$
	\end{claim}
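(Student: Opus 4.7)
The plan is to express any $v \in \partial f(x)$ via a chain rule under the expectation, and then to separate the contribution of the clean samples ($\delta = 0$) from the worst case on the corrupted samples ($\delta = 1$). Since $f$ is $2LC^2$-weakly convex by Claim~\ref{lem:robust_weak}, it is Clarke regular, and since the integrand $g(x,\omega) := |\sigma(\langle x, z\rangle, z) - \sigma(\langle \bar x, z\rangle, z) - \delta\xi|$ is uniformly Lipschitz on $\cX$ by (D4), the standard subdifferential--under--the--integral calculus combined with the chain rule for $|\cdot|$ represents $v$ as
\[
v = \EE\bigl[\sigma'(\langle x, z\rangle, z)\, s_\omega\, z\bigr],
\]
where $\omega := (z, \delta, \xi)$ and $s_\omega$ is a measurable selection from $\partial|\cdot|(T(x,\omega))$ with $T(x, \omega) := \sigma(\langle x, z\rangle, z) - \sigma(\langle \bar x, z\rangle, z) - \delta\xi$. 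Thus $s_\omega = \mathrm{sign}(T(x, \omega))$ whenever $T(x, \omega) \neq 0$, and $|s_\omega| \leq 1$ always.

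Next I would read off the sign behavior on each event. On $\{\delta = 0\}$, assumption (D1) yields $\langle x - \bar x, z\rangle \neq 0$ almost surely, and strict monotonicity of $\sigma(\cdot, z)$ from (D4) then forces $s_\omega = \mathrm{sign}(\langle x - \bar x, z\rangle)$, so that
\[
\sigma'(\langle x, z\rangle, z)\, s_\omega\, \langle z, x - \bar x\rangle \;=\; \sigma'(\langle x, z\rangle, z)\, |\langle z, x - \bar x\rangle|.
\]
On $\{\delta = 1\}$ only $|s_\omega| \leq 1$ is available, and the quantity above may be as negative as $-\sigma'(\langle x, z\rangle, z)\, |\langle z, x - \bar x\rangle|$. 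Combining the two cases yields the pointwise lower bound
\[
\sigma'(\langle x, z\rangle, z)\, s_\omega\, \langle z, x - \bar x\rangle \;\geq\; \sigma'(\langle x, z\rangle, z)\, |\langle z, x - \bar x\rangle|\, \bigl(1 - 2\,\bfone_{\{\delta = 1\}}\bigr),
\]
which is tight on $\{\delta = 0\}$ and valid on $\{\delta = 1\}$.

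Taking expectations and invoking the bounds $c_\sigma \leq \sigma'(\cdot, z) \leq C_\sigma$ from (D4), independence of $\delta$ from $(z, \xi)$ from (D2), together with $c\|x-\bar x\|_2 \leq \EE[|\langle z, x - \bar x\rangle|] \leq \sqrt{\EE|\langle z, x - \bar x\rangle|^2} \leq C\|x - \bar x\|_2$ from (D1), I arrive at
\[
\langle v, x - \bar x\rangle \;\geq\; (c_\sigma c - 2\pfail C_\sigma C)\,\|x - \bar x\|_2.
\]
The lower bound on $\|v\|_2$ then follows immediately from Cauchy--Schwarz. The main subtlety is the rigorous justification of the Aumann--integral representation of $\partial f(x)$: this rests on the Clarke regularity of $f$ together with a standard measurable selection argument, after which the sign analysis and the moment bounds are routine.
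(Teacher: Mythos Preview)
Your proposal is correct and follows essentially the same route as the paper: represent $v$ as an expectation of measurable selections via Clarke regularity and the Aumann integral, split on $\{\delta=0\}$ versus $\{\delta=1\}$, force the sign on the clean event using strict monotonicity and (D1), and bound the corrupted event in the worst case using $|s_\omega|\le 1$ together with (D2) and (D4). The only stylistic difference is that the paper introduces an explicit reference selection $\zeta_0$ and argues $\EE[\zeta-\zeta_0\mid\delta=0]=0$ via differentiability of the clean expectation, whereas you observe directly that $T(x,\omega)\neq 0$ a.s.\ on $\{\delta=0\}$ so $s_\omega$ is uniquely determined there; your version is slightly more direct but the substance is the same.
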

	\begin{proof}[Proof of Claim~\ref{lem:grad_robust_bound}]
		For every $x \in \RR^d$, define a measurable mapping $\zeta_0 (x, \cdot) : \Omega \rightarrow \RR^d$ by
		\begin{align*}
		\zeta_0(x, z ) := \sigma'(\dotp{x, z}, z)\sign\left(\sigma(\dotp{x, z}, z) - \sigma(\dotp{\bar x, z}, z)\right) \cdot z.
		\end{align*}
		Now, observe that  
		\begin{align*}
		\EE\left[ \dotp{\zeta_0(x, z), x - \bar x}\right] &= \EE\left[ \sigma'(\dotp{x, z},z)\sign(\sigma(\dotp{x, z},z) - \sigma(\dotp{\bar x, z},z))\dotp{z, x- \bar x} )\right]\\
		&= \EE\left[ \sigma'(\dotp{x, z},z)|\dotp{z, x- \bar x}|\right]\\
		&\geq c_\sigma\EE\left[ |\dotp{z, x- \bar x}|\right]\\
		&\geq c_\sigma c \cdot \|x - \bar x\|_2,
		\end{align*}
		where the second equality follows from monotonicity of $\sigma(\cdot, z)$.

		As each term $| \sigma(\dotp{x, z}, z) - \sigma(\dotp{\bar x, z}, z) + \delta \cdot \xi|$ is subdifferentially regular (each term is Lipschitz and weakly convex by Claim~\ref{lem:robust_weak}), it follows that 
		\begin{align*}
		\partial f(x) = \left\{ \EE\left[ \zeta(x, (z, \xi, \delta))\right] \mid \zeta(x, (z, \xi, \delta)) \in \partial_x (|\sigma(\dotp{\cdot, z}, z) - \sigma(\dotp{\bar x, z}, z) + \delta \cdot \xi |)(x) \text{ a.e.}\right\},
		\end{align*}
		where the set definition ranges over all possible  $ \zeta(x, \cdot) : \Omega \rightarrow \RR^d$ that are also measurable~\cite[Theorem 2.7.2]{clarke}. 
		Next, we claim that for any such measurable mapping, we have
		\begin{equation}\label{eqn:restriction}
		\EE\left[ \zeta(x, (z, \xi, \delta)) - \zeta_0(x,z)) \mid \delta = 0\right] = 0.
		\end{equation}
		To see this, observe that the function $\EE|\sigma(\dotp{\cdot, z}, z) - \sigma(\dotp{\bar x, z}, z)|$ is differentiable at any $x \neq \bar x$, since $\mathbb{P}(\dotp{y, z} = 0) = 1$. It follows that the subdifferential of this function at any $x \neq \bar x$ consists only of the expectation of the measurable selection $\zeta_0$. The claimed equality~\eqref{eqn:restriction} follows. 
		
		Thus, by  linearity of expectation  and the inclusion $\zeta(x, (z, \xi, \delta)), \zeta_0(x, z) \in \sigma'(\dotp{x, z}, z) [-1, 1] z$, we have
		\begin{align*}
		\dotp{\EE\left[\zeta(x, (z, \xi, \delta)) - \zeta_0(x,z )\right], x - \bar x}
		&= (1-\pfail)\dotp{\EE\left[\zeta(x, (z, \xi, \delta)) - \zeta_0(x, z)) \mid \delta = 0\right], x - \bar x}\\
		& \hspace{20pt}+ \pfail \dotp{\EE\left[\zeta(x, (z, \xi, \delta)) - \zeta_0(x, z) \mid \delta = 1\right], x - \bar x}\\
		&= \pfail \dotp{\EE\left[\zeta(x, (z, \xi, \delta)) - \zeta_0(x, z) \mid \delta = 1\right], x - \bar x}\\
		&\geq -\pfail \EE\left[ 2\sigma'(\dotp{x, z}, z)|\dotp{z, x - \bar x}|\right]\\
		&\geq -2\pfail C_\sigma C \cdot \|x - \bar x\|_2.
		\end{align*}
		Therefore, we arrive at the bound: 
		\begin{align*}
		\dotp{\EE\left[\zeta(x, (z, \xi, \delta))\right], x - \bar x} &= \dotp{\EE\left[\zeta_0(x, z)\right], x - \bar x} + \dotp{\EE\left[\zeta(x, (z, \xi, \delta)) - \zeta_0(x, z)\right], x - \bar x} \\
		&\geq c_\sigma c  \|x - \bar x\|_2 -  2\pfail C_\sigma C \cdot \|x - \bar x\|_2\\
		&= (c_\sigma c - 2\pfail C_\sigma C)\cdot \|x - \bar x\|_2.
		\end{align*}
		As every element of $\partial f(x)$ is of the form $\EE\left[\zeta(x, (z, \xi, \delta))\right]$, the proof is complete. 
	\end{proof}
	
	While the only stationary point of $f$ is $\bar x$, it is as yet unclear where the (random) stationary points of $f_S$ lie, since we can only guarantee that that the functional deviation $|f - f_S|$ is small on bounded sets. Thus, we first show that constraining $f$ to a ball containing $\bar x$ does not create any extraneous stationary points at the boundary of the ball.
	\begin{claim}[Constrained Stationary Conditions of $f$]\label{claim:constrained}
		Let $a > 1$ be a fixed constant. Let $x \in a\|\bar x\|_2\bf B$ be such that $x \neq \bar x$. Then for every $v \in \partial f(x) +  N_{a\|\bar x\|_2\bf B}(x)$, we have 
		$$
		(c_\sigma c - 2\pfail C_\sigma C)\cdot \|x - \bar x\|_2 \leq \dotp{v, x - \bar x}
		$$
		and consequently
		$$
		\|v\|_2 \geq c_\sigma c - 2\pfail C_\sigma C.
		$$
	\end{claim}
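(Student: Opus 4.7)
The plan is to reduce Claim~\ref{claim:constrained} directly to Claim~\ref{lem:grad_robust_bound} by peeling off the normal cone contribution and exploiting the fact that $\bar x$ lies strictly in the interior of the constraint set (since $a > 1$ guarantees $\|\bar x\|_2 < a\|\bar x\|_2$).

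First I would decompose an arbitrary element $v \in \partial f(x) + N_{a\|\bar x\|_2\bf B}(x)$ as $v = v_1 + v_2$ with $v_1 \in \partial f(x)$ and $v_2 \in N_{a\|\bar x\|_2\bf B}(x)$. The bound $\dotp{v_1, x - \bar x} \geq (c_\sigma c - 2\pfail C_\sigma C)\|x - \bar x\|_2$ is exactly the content of Claim~\ref{lem:grad_robust_bound}. So the substance of the proof is to verify $\dotp{v_2, x - \bar x} \geq 0$.

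To see this, recall that the normal cone of a Euclidean ball of radius $R := a\|\bar x\|_2$ is
\[
N_{R\bf B}(x) = \begin{cases} \{0\} & \text{if } \|x\|_2 < R, \\ \{\lambda x : \lambda \geq 0\} & \text{if } \|x\|_2 = R.\end{cases}
\]
In the interior case, $v_2 = 0$ and the contribution is trivially zero. In the boundary case, $v_2 = \lambda x$ for some $\lambda \geq 0$, and by Cauchy--Schwarz together with $\|\bar x\|_2 < R = \|x\|_2$,
\[
\dotp{v_2, x - \bar x} = \lambda\bigl(\|x\|_2^2 - \dotp{x,\bar x}\bigr) \geq \lambda\|x\|_2\bigl(\|x\|_2 - \|\bar x\|_2\bigr) \geq 0.
\]
Adding the two contributions yields the first displayed estimate.

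The norm lower bound then follows immediately from Cauchy--Schwarz: since $x \neq \bar x$, dividing the inequality $\dotp{v,x-\bar x} \geq (c_\sigma c - 2\pfail C_\sigma C)\|x - \bar x\|_2$ by $\|x - \bar x\|_2$ on the left using $\dotp{v, x-\bar x} \leq \|v\|_2 \|x - \bar x\|_2$ gives $\|v\|_2 \geq c_\sigma c - 2\pfail C_\sigma C$. There is no real obstacle here; the argument is purely mechanical, with the only conceptual point being the observation that the strict containment $\|\bar x\|_2 < a\|\bar x\|_2$ forces the normal-cone contribution to point ``outward'' relative to $\bar x$, so the constraint does not introduce spurious stationarity at the boundary.
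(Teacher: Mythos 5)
Your argument is correct and matches the paper's proof: both decompose $v$ into a subgradient part handled by Claim~\ref{lem:grad_robust_bound} plus a normal-cone part $\lambda x$ on the boundary, and both use Cauchy--Schwarz with $\|x\|_2 = a\|\bar x\|_2 > \|\bar x\|_2$ to show that $\dotp{\lambda x, x - \bar x}\geq 0$. No issues.
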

	\begin{proof}[Proof of Claim~\ref{claim:constrained}]
		By Claim~\ref{lem:grad_robust_bound}, we must only consider the case when $\|x\|_2 = a\|\bar x\|_2$ since otherwise $N_{a\|\bar x\|_2\bf B}(x) = \{0\}$ and $v \in \partial f(x)$. In this case, we have 
		$$
		v = v_f + \lambda x
		$$
		where $v_f \in \partial f(x)$ and $\lambda \geq 0$. Therefore, we find that 
		\begin{align*}
		\dotp{v, x - \bar x} &= \dotp{v_F, x - \bar x} + \dotp{\lambda x, x - \bar x}\\
		&\geq \dotp{v_F, x - \bar x} + \lambda \|x\|^2_2 - \lambda \dotp{x, \bar x}\\
		&\geq \dotp{v_F, x - \bar x} + \lambda a^2 \|\bar x\|^2_2 - \lambda a\|\bar x\|^2_2\geq \dotp{v_F, x - \bar x}.
		\end{align*}
		Thus, applying Claim~\ref{lem:grad_robust_bound} completes the proof.
	\end{proof}
	
	Finally, we may now examine the stationary points of $ f_S$ constrained to a ball. We show that every nearly stationary point of $f_S + \delta_\cX$ must be within a small ball around $\bar x$.
	To that end, we define 
	$$
	\eta = \sqrt{\frac{32a^2 \|\bar x\|^2_2 C_\sigma^2\EE\left[ \|z\|^2_2\right]}{ m}} + 2t.
	$$
	Notice that for all $z \in \Omega$, the function $\sigma(\cdot, z)$ is $L(z) = C_\sigma$ Lipschitz. In addition, every point in $\cX = a\|\bar x\|_2 \bf B$ is bounded in norm by $a\|\bar x\|_2$. Therefore, by Corollary~\ref{cor:dim_indep_rateGLM} with $x_0=0$, we have that with probability 
	$$1 - 2\exp\left(-m\psi^\star_{\varepsilon   \|z\|_2}\left(\frac{t}{2a\|\bar x\|_2 C_\sigma}\right)\right)-p_m,
	$$
	the bound holds:
	$$
	\dist_{1/\bar \rho}\left(\gph \partial \varphi,\gph \partial \varphi_S\right)\leq \sqrt{\frac{\eta}{\bar\rho-\rho}},
	$$
	where we set $\rho := \max\{2LC^2, 2L\tau_m\}$ and $\bar \rho>\rho$ is arbitrary.
	
	In particular, for any $\gamma > 0$, setting 
	$
	\bar \rho = \frac{\gamma^2}{\eta} + \rho, 
	$
	we deduce that for any pair $(x,v)\in \gph\partial \varphi_S$ there exists  a point $\hat x \in \cX$ and a subgradient $\hat v \in \partial \varphi(\hat x)$ satisfying  
	\begin{align*}
	\|x - \hat x\|_2 \leq \eta/\gamma && \text{and} && \|v - \hat v \|_2 \leq \bar \rho \cdot \eta/\gamma = \gamma + \rho\eta/\gamma. 
	\end{align*}
	Let us choose $\gamma > 0$ so that 
	$
	\gamma + \rho\eta /\gamma \leq \frac{1}{2}D,
	$
	which may be accomplished by finding a root of the polynomial 
	$
	\gamma^2 - \frac{1}{2}D\gamma + \rho\eta = 0.
	$
	Thus by the quadratic formula, we have
	$
	\gamma = \frac{ \frac{1}{2}D + \sqrt{ \frac{1}{4}D^2 - 4\rho\eta}}{2}. 
	$
	Notice that by our assumptions on $t$ and $m$, we have
	$
	4\rho\eta \leq \frac{1}{8}D^2,
	$
	and therefore we deduce
	$\frac{D}{4}\leq\gamma.$
	Thus by Claim~\ref{claim:constrained}, if $\hat x \neq \bar x$, there exists $\hat v \in \partial \varphi (\hat x) = \partial f(\hat x) + N_{\cX}(\hat x) $ such that 
	$$
	\|v\|_2 \geq \|\hat v\|_2 - \|v - \hat v\|_2 \geq (c_\sigma c - 2\pfail C_\sigma C)  - \frac{1}{2}(c_\sigma c - 2\pfail C_\sigma C) = \frac{1}{2}(c_\sigma c - 2\pfail C_\sigma C).
	$$
	Otherwise, $\hat x = \bar x$ and 
	$
	\| x - \bar x\|_2 \leq \eta/\gamma\leq \frac{4\eta}{D},
	$
	as desired. 
\end{proof}

\bibliographystyle{plain}
\bibliography{bibliography}

\appendix\section{Rademacher Complexity and Functional Bounds.}\label{sec:radamacher}

	In this section, we use the well-known technique for bounding the suprema of empirical processes, based on Rademacher complexities (see e.g.,~\cite{bartlett2002rademacher,bartlett2005local}). We will use these bounds to obtain concentration inequalities for multi-class generalized linear models. Although such arguments have become standard in the literature, we present a proof that explicitly uses Theorem~\ref{thm:McDiarmid} in order to obtain a slightly more general result for unbounded classes. None of the results or techniques here are new; rather, the purpose of this section is to keep the paper self-contained. We begin with the following standard definition.
	\begin{definition}{\rm 
			The {\em Rademacher complexity} of a set $A\subset\R^m$ is the quantity
			$$\mathcal{R}(A)=\frac{1}{m}\EE_{\varepsilon}\left[\sup_{a\in A}\,\langle \varepsilon,a\rangle\right].$$
			where the coordinates of $\varepsilon\in\R^m$ are i.i.d.\ Rademacher random variables.
		}
	\end{definition}
	Given a collection of functions $\mathcal{G}$ from $\Omega$ to $\R$ and a set $S=\{z_1,\ldots, z_m\}\subset\Omega$, we define 
	$$\mathcal{G}\circ S:=\{(g(z_1),\ldots,g(z_m)): g\in \mathcal{G}\}.$$

	The following theorem shows that the Rademacher complexity directly controls uniform convergence of the sample average approximation.
	
	\begin{thm}\label{thm:bound_sup}
		Consider a countable class  $\cG$ of measurable functions from $\Omega$ to $\RR$ and let $S=\{z_1, \ldots, z_m\}$ be an i.i.d. sample from $P$. Define the random variable 
		$$
		Y = \sup_{g \in \cG} |g(z) - g(z')|,
		$$ 
		for independent copies $z,z'\sim P$ and let $\varepsilon$ be a Rademacher random variable.
		Then for all $t > 0$, with probability $$1 - 2\exp\left(-m\psi^\star_{\varepsilon Y}(t)\right),$$ we have the following bound:
		\begin{align*}
		\sup_{g \in \cG} \left| \EE_z\left[g(z)\right]-\frac{1}{m} \sum_{i=1}^m g(z_i) \right| \leq  2\EE_{S}\mathcal{R}(\mathcal{G}\circ S)+t.
		\end{align*} 
	\end{thm}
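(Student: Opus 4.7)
The plan is to decouple the argument into two classical ingredients: a concentration inequality for the random variable
\[ F(S) := \sup_{g \in \cG} \left| \EE_z[g(z)] - \frac{1}{m}\sum_{i=1}^m g(z_i) \right|, \]
together with a symmetrization step bounding $\EE_S F(S)$ by the Rademacher complexity. Theorem~\ref{thm:McDiarmid} will deliver the concentration estimate, and a ghost-sample argument will deliver the bound on the expectation.

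First I would verify that $F$ satisfies a bounded-difference-type property. Replacing $z_i$ by an independent copy $z_i'$ and applying the reverse triangle inequality twice yields
\[ |F(S) - F(S^i)| \leq \frac{1}{m}\sup_{g \in \cG} |g(z_i) - g(z_i')| =: \omega_i(z_i, z_i'), \]
and each $\omega_i$ has the law of $Y/m$. Applying Theorem~\ref{thm:McDiarmid} with these $\omega_i$'s, together with the elementary rescaling $\psi_{\varepsilon \omega}(\lambda) = \psi_{\varepsilon Y}(\lambda/m)$ and hence $\psi_{\varepsilon \omega}^\star(t) = \psi_{\varepsilon Y}^\star(mt)$, gives for every $t \geq 0$
\[ \mathbb{P}\bigl(F(S) - \EE_S F(S) \geq t\bigr) \leq \exp\bigl(-m\, \psi_{\varepsilon Y}^\star(t)\bigr). \]
Running the same argument for $-F$ and taking a union bound absorbs the factor $2$ in the stated probability $1 - 2\exp(-m\, \psi_{\varepsilon Y}^\star(t))$.

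Next I would bound $\EE_S F(S)$ by the Rademacher complexity through standard symmetrization. Introduce an independent ghost sample $S' = \{z_1',\ldots, z_m'\}$ and use Jensen's inequality together with $\EE_z[g(z)] = \EE_{S'}[\tfrac{1}{m}\sum_i g(z_i')]$ to write
\[ \EE_S F(S) \leq \EE_{S,S'}\, \sup_{g \in \cG} \left|\frac{1}{m}\sum_{i=1}^m \bigl(g(z_i') - g(z_i)\bigr)\right|. \]
Since the pairs $(z_i, z_i')$ are exchangeable, inserting i.i.d.\ Rademacher signs $\varepsilon_i$ in each summand does not alter the law. Splitting via the triangle inequality and exploiting the symmetry of $\varepsilon$ dominates the right-hand side by $2\,\EE_S \mathcal{R}(\cG \circ S)$, which is exactly the desired mean bound.

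Combining the two estimates yields the theorem. I expect the only subtle bookkeeping to lie in the McDiarmid step, where the Legendre conjugate must be transformed correctly after the rescaling by $1/m$; both the symmetrization and the bounded-difference verification are routine. The countability assumption on $\cG$ enters only to ensure measurability of $F$ and of the auxiliary random variables $\omega_i(z_i, z_i') = \tfrac{1}{m}\sup_{g \in \cG} |g(z_i) - g(z_i')|$.
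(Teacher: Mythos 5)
Your proposal follows essentially the same route as the paper: the extended McDiarmid inequality (Theorem~\ref{thm:McDiarmid}) with $\omega_i$ distributed as $Y/m$ for the concentration step, and ghost-sample symmetrization for the mean bound; your rescaling $\psi^\star_{\varepsilon\omega}(t)=\psi^\star_{\varepsilon Y}(mt)$ and the resulting exponent are correct. The one place where your write-up does not quite deliver the stated bound is the symmetrization step. Because you keep the absolute value inside the supremum throughout, the triangle-inequality split produces $2\,\EE_S\bigl[\tfrac{1}{m}\EE_\varepsilon\sup_{g}|\sum_i\varepsilon_i g(z_i)|\bigr]$, i.e.\ the Rademacher complexity of $(\cG\cup(-\cG))\circ S$, which with the paper's definition of $\mathcal{R}$ (no absolute value inside the supremum) can be strictly larger than $\mathcal{R}(\cG\circ S)$. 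The paper sidesteps this by splitting into the two one-sided suprema $X^+=\sup_g\{\EE g-\tfrac1m\sum_i g(z_i)\}$ and $X^-=\sup_g\{\tfrac1m\sum_i g(z_i)-\EE g\}$, bounding each expectation by $2\,\EE_S\mathcal{R}(\cG\circ S)$ via the one-sided symmetrization lemma (using $\mathcal{R}(-A)=\mathcal{R}(A)$ by symmetry of $\varepsilon$), and then union-bounding the two McDiarmid events --- which is also the actual source of the factor $2$ in the failure probability, rather than your ``same argument for $-F$'' (which is vacuous since $F\geq 0$ and only an upper deviation is needed). This is a repairable slip rather than a wrong approach, but as written your symmetrization does not yield the theorem with the paper's $\mathcal{R}$.
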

	
	\begin{proof}
		Define the two random variables
		$X^+=\sup_{g \in \cG} \left\{ \EE_z\left[g(z)\right]-\frac{1}{m} \sum_{i=1}^m g(z_i)\right\}$ and $X^-=\sup_{g \in \cG} \left\{\frac{1}{m} \sum_{i=1}^m g(z_i)- \EE_z\left[g(z)\right]\right\}$. We first bound the expectations of $X^+$ and $X^-$.
		Appealing to \cite[Lemma 26.2]{shalev2014understanding} we deduce $\EE[X^+]\leq 2\EE_{S}\mathcal{R}(\mathcal{G}\circ S)$. Replacing $\mathcal{G}$ with $-\mathcal{G}$ and using \cite[Lemma 26.2]{shalev2014understanding}, we also learn $\EE[X^-]\leq 2\EE_{S}\mathcal{R}(\mathcal{-G}\circ S)=2\EE_{S}\mathcal{R}(\mathcal{G}\circ S)$. 
		Next, a quick computation shows 
		\begin{align*}
		&|X^+(z_1, \ldots, z_m) - X^+(z_1, \ldots, z_{i-1}, z_i', z_{i+1}, \ldots, z_m)| \leq \frac{1}{m}\sup_{g \in \cG} \left|g(z_i) - g(z_i')\right|=\frac{1}{m}Y,
		\end{align*}
		as well as the analogous inequality for $X^-$. Thus using Theorem~\ref{thm:McDiarmid}, we conclude that with probability $1- 2\exp(-m\psi^\star_{m^{-1}\varepsilon Y}(t/m))$, we have $\max\{X^+,X^-\}\leq 2\EE_S\mathcal{R}(\mathcal{G}\circ S)+t$. Noting the equality 
		$\psi^\star_{m^{-1}\varepsilon Y}(t/m)=\psi^\star_{\varepsilon Y}(t)$ completes the proof.
	\end{proof}

	The following theorem provides an upper bound on the Rademacher complexity of linear classes; see the original article \cite{kakade2009complexity} or the monograph \cite[Lemma 26.10]{shalev2014understanding}.
	
	\begin{lem}[Rademacher complexity of linear classes]\label{lem:rad_comp_lin_class}{\hfill \\ }
		Consider the set $A=\{(\langle w,z_1\rangle,\ldots,\langle w,z_m\rangle): \|w\|_2\leq 1\}$, where $z_1\ldots, z_m$ are arbitrary points. Then the estimate holds:
		$$\mathcal{R}(A)\leq \sqrt{\frac{\sum_{i=1}^m \|z_i\|^2_2}{m}}.$$\end{lem}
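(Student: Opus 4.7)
The plan is to follow the classical three-step argument: rewrite the supremum via duality, apply Jensen's inequality to move the expectation inside a square root, and then compute the resulting second moment using the orthogonality of Rademacher variables.

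First I would expand the definition of Rademacher complexity and exchange the order of summation:
\begin{equation*}
\mathcal{R}(A)=\frac{1}{m}\EE_{\varepsilon}\sup_{\|w\|_2\leq 1}\sum_{i=1}^m \varepsilon_i\langle w,z_i\rangle=\frac{1}{m}\EE_{\varepsilon}\sup_{\|w\|_2\leq 1}\Big\langle w,\sum_{i=1}^m \varepsilon_i z_i\Big\rangle.
\end{equation*}
The Cauchy--Schwarz inequality (attained at $w=\sum_i\varepsilon_i z_i/\|\sum_i\varepsilon_i z_i\|_2$ when the latter is nonzero) then gives the equality
$\mathcal{R}(A)=\tfrac{1}{m}\EE_{\varepsilon}\big\|\sum_{i=1}^m\varepsilon_i z_i\big\|_2$.

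Next I would apply Jensen's inequality to the concave map $t\mapsto\sqrt{t}$, yielding
$\EE_{\varepsilon}\|\sum_i\varepsilon_i z_i\|_2\leq\sqrt{\EE_{\varepsilon}\|\sum_i\varepsilon_i z_i\|_2^2}$.
The second moment is easy to evaluate: expanding the norm squared and using that $\varepsilon_i,\varepsilon_j$ are independent with $\EE[\varepsilon_i\varepsilon_j]=\delta_{ij}$ (since each $\varepsilon_i$ has mean $0$ and variance $1$), we obtain
\begin{equation*}
\EE_{\varepsilon}\Big\|\sum_{i=1}^m\varepsilon_i z_i\Big\|_2^2=\sum_{i,j=1}^m \langle z_i,z_j\rangle\,\EE[\varepsilon_i\varepsilon_j]=\sum_{i=1}^m\|z_i\|_2^2.
\end{equation*}

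Chaining these bounds produces $\mathcal{R}(A)\leq \tfrac{1}{m}\sqrt{\sum_{i=1}^m\|z_i\|_2^2}$, which in particular implies the (slightly weaker) claimed bound $\sqrt{\tfrac{1}{m}\sum_{i=1}^m\|z_i\|_2^2}$ since $1/m\leq 1/\sqrt{m}$ for $m\geq 1$. There is really no serious obstacle here: the only minor subtleties are remembering that the supremum inside the Rademacher expectation is attained on the boundary of the unit ball (hence the Cauchy--Schwarz step is tight) and that Jensen's inequality gives an upper bound in the direction we want because we applied it to a concave function being averaged.
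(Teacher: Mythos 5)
Your argument is correct and is exactly the standard one the paper defers to (it cites \cite[Lemma 26.10]{shalev2014understanding} rather than proving the lemma): rewrite the supremum as $\frac{1}{m}\EE_\varepsilon\|\sum_i\varepsilon_i z_i\|_2$ via Cauchy--Schwarz, apply Jensen, and use $\EE[\varepsilon_i\varepsilon_j]=\delta_{ij}$. Note that you in fact obtain the sharper bound $\frac{1}{m}\sqrt{\sum_{i=1}^m\|z_i\|_2^2}$, which implies the stated one and is the form actually invoked later in the proof of Theorem~\ref{thm:glm_main}.
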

	
	The class of loss functions $\cG$ considered below will be formed from compositions of functions with linear classes. A useful result for unraveling such compositions is the following vector-valued contraction inequality, recently proved by Maurer~\cite{vector_contraction}.
	
	\begin{thm}[Contraction Inequality~{\cite[Theorem 3]{vector_contraction}}]\label{thm:vector_contraction}
		Let $\cX$ denote a countable set. For $i = 1, \ldots, m$, let $F_i \colon \cS \rightarrow \RR$ and $G_i \colon \cS \rightarrow \R^K$ be functions satisfying
		$$
		F_i(s)  - F_i(u)  \leq \|G_i(s) - G_i(u) \|_2 \qquad \text{for all $s, u \in \cS$}.
		$$
		Define the two sets
		\begin{equation*}
		F\circ \cS=\{(F_1(s),\ldots,F_m(s)): s\in \cS \}\qquad \textrm{ and }\qquad
		G\circ \cS =\{\,\left(G^k_i(s)\right)_{i,k}: s\in \mathcal{S}\},
		\end{equation*}
		where $G^k_i(s)$ denotes the $k$'th coordinate of $G_i(s)$.
		Then the estimate holds:
		\begin{align*}
		\mathcal{R}(F\circ \cS) \leq \sqrt{2}K\cdot  \mathcal{R}(G\circ \cS).
		\end{align*}	  
	\end{thm}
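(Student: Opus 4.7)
The plan is to follow Maurer's argument from~\cite{vector_contraction}. After stripping the normalization conventions (the Rademacher complexity $\mathcal{R}$ divides by the ambient dimension, which is $m$ for $F\circ\cS$ and $mK$ for $G\circ\cS$, accounting for the factor $K$ on the right-hand side), the inequality to establish reduces to the bare-bones statement of Maurer's vector-contraction:
\begin{equation*}
\EE_\varepsilon \sup_{s\in\cS}\sum_{i=1}^m \varepsilon_i F_i(s) \;\leq\; \sqrt{2}\,\EE_\eta \sup_{s\in\cS}\sum_{i=1}^m\sum_{k=1}^K \eta_{ik}\,G_i^k(s),
\end{equation*}
where $\eta_{ik}$ are i.i.d.\ Rademacher variables independent of $\varepsilon$. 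Thus the work is all on the left-hand side of this raw inequality; the $K$ is then a cosmetic artifact of the definition of $\mathcal{R}$.

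The first step would be a symmetrization exploiting $\varepsilon\sim-\varepsilon$: rewrite the left-hand side as a supremum over pairs $(s,s')\in\cS\times\cS$ of expressions of the form $\sum_i\varepsilon_i(F_i(s)-F_i(s'))$, a standard manipulation that follows from splitting the sup according to the two possible signs. This is the point at which the Lipschitz hypothesis enters: each scalar difference $F_i(s)-F_i(s')$ is dominated (in absolute value, since the hypothesis is symmetric in $s,s'$) by $\|G_i(s)-G_i(s')\|_2$. Next I would invoke the sharp Khintchine lower bound $\|v\|_2\le\sqrt{2}\,\EE_\eta|\langle\eta,v\rangle|$ for $v\in\R^{mK}$, applied \emph{once globally} to the full vector of differences of all $G_i^k$ values across $i$ and $k$. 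Exchanging expectation and supremum via Jensen's inequality, and reversing the initial symmetrization, produces a single Rademacher average of the form $\sum_{i,k}\eta_{ik}G_i^k(s)$ inside the supremum, with the Khintchine constant $\sqrt{2}$ appearing exactly once.

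The main obstacle is precisely keeping the constant at $\sqrt{2}$ throughout: a naive coordinate-by-coordinate iteration of a one-step contraction would compound a factor of $\sqrt{2}$ at every index and disastrously produce $(\sqrt{2})^m$, and naive handling of the absolute value $|\langle\eta,v\rangle|=\max(\langle\eta,v\rangle,\langle-\eta,v\rangle)$ under a supremum doubles the constant. Maurer's resolution, which I would mirror, is to apply Khintchine exactly once — to the full $mK$-dimensional vector formed from all differences $G_i^k(s)-G_i^k(s')$ simultaneously — and to use the symmetry $\eta\sim-\eta$ together with Jensen in exactly the right order so that the two sign choices in the unfolded absolute value merge into a single Rademacher expectation. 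Once this step is executed cleanly, the remaining manipulations (undoing the pair symmetrization, absorbing the $\tfrac{1}{2}$ back into the expectation, and translating the raw sup inequality into the normalized $\mathcal{R}$ form) are routine and yield the stated bound.
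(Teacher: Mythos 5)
The paper does not actually prove this statement---it is quoted verbatim from Maurer's paper and used as a black box---so the only question is whether your sketch correctly reconstructs that argument. Your bookkeeping of the normalization (the factor $K$ arising because $\mathcal{R}$ divides by $mK$ for $G\circ\cS$ versus $m$ for $F\circ\cS$), the symmetrization over pairs via $\varepsilon\sim-\varepsilon$, and the sharp $L^1$ Khintchine constant $\sqrt{2}$ are all right. But the central mechanism you propose---a single global application of Khintchine to the full $mK$-dimensional difference vector---does not work. After symmetrizing you face $\tfrac12\EE_\varepsilon\sup_{s,s'}\sum_i\varepsilon_i(F_i(s)-F_i(s'))$, and the hypothesis only controls each summand separately: $|F_i(s)-F_i(s')|\le\|G_i(s)-G_i(s')\|_2$. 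Discarding the signs gives the $\ell_1$-aggregate $\sum_i\|G_i(s)-G_i(s')\|_2$, whereas a single global Khintchine step requires the $\ell_2$-aggregate $\left(\sum_i\|G_i(s)-G_i(s')\|_2^2\right)^{1/2}$; the former exceeds the latter by a factor as large as $\sqrt{m}$, so this route loses a dimension-dependent factor. Worse, dropping the signs of all $m$ terms under a supremum over a single pair $(s,s')$ destroys the cancellation that makes the quantity a Rademacher average in the first place: the trick of reordering the pair to remove an absolute value is valid when exactly one signed term is in play, not $m$ of them simultaneously.

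Relatedly, your reason for rejecting the coordinate-by-coordinate route---that it would compound to $(\sqrt{2})^m$---is a misreading of Maurer's argument, which \emph{is} coordinate-by-coordinate. One conditions on $\varepsilon_j$ for $j\ne i$, absorbs all other terms into an auxiliary function $H$, and proves the one-step inequality $\EE_{\varepsilon_i}\sup_s\left(H(s)+\varepsilon_iF_i(s)\right)\le\EE_{\eta_i}\sup_s\left(H(s)+\sqrt{2}\sum_k\eta_{ik}G_i^k(s)\right)$; here the pair-symmetrization, the Lipschitz bound, and Khintchine are applied to the single block $i$, where the sign-removal step is legitimate. Iterating over $i=1,\dots,m$ converts one term at a time, and the $\sqrt{2}$ multiplies only the newly converted block rather than the whole supremum, so the factors add up to a single uniform coefficient $\sqrt{2}$ and never multiply together. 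Repairing your proof means restoring exactly this induction.
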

	
	We are now ready to prove Theorem~\ref{thm:glm_main}. 
	
	\begin{proof}[Proof of Theorem~\ref{thm:glm_main}]
		We will apply Theorem~\ref{thm:bound_sup}, to the function class
		$$
		\cG = \{ z \mapsto f(x, z) \mid x \in \cX\}.
		$$
		We note that, due to the separability of $\RR^d$ and the continuity of the integrands, any supremum over all $x \in \cX$ may be replaced by a supremum over a countable dense subset of $\cX$, without affecting its value. We ignore this technicality throughout the proof.	
		
		As the first step in applying Theorem~\ref{thm:bound_sup}, we compute 
		\begin{align*}
		&\sup_{x \in \cX} |f(x, z) - f(x, z') | \\
		&\leq |f(x_0, z) - f(x_0, z')|+ L(z) \sup_{x \in \cX} \sqrt{\sum_{k=1}^K \dotp{x - x_0, \phi_k(z)}^2} +  L(z') \sup_{x \in \cX} \sqrt{\sum_{k=1}^K \dotp{x - x_0, \phi_k(z')}^2}\\
		&\leq |f(x_0, z) - f(x_0, z')|+ BL(z) \sqrt{\sum_{k=1}^K\|\phi_k(z)\|^2_2} + BL(z') \sqrt{\sum_{k=1}^K\|\phi_k(z')\|^2_2},
		\end{align*}
		where the last inequality uses the bound $\|x - x_0\|_2 \leq B$ twice. Notice the right-hand-side is precisely the random variable $Y$.
		
		Next we upper bound the expected Rademacher complexity $\EE_S\cR(\cG\circ S)$ by using Theorem~\ref{thm:vector_contraction}. To this end, fix a sample set $S=\{z_1,\ldots,z_m\}$ and define
		$$\cS=\{(\langle x,\phi_k(z_i)\rangle)_{i,k}: x\in \cX\}.$$	
		For every index $s \in \cS$ and $i \in \{1, \ldots, m\}$, set $s_i:=(s_{i1},\ldots, s_{iK})$ and define the functions
		$F_i(s) = \ell(s_{i}, z_i)$ and $G_i(s) =L(z_i)s_{i}$. 
		We successively compute
		\begin{equation}\label{eqn:main_ineq}
		\begin{aligned}
		\mathcal{R}(\mathcal{F}\circ S)&=\frac{1}{m}\sup_{x\in \cX} \sum_{i=1}^m \sigma_i f(x,z_i)\\
		&=\frac{1}{m} \sup_{x\in \cX} \sum_{i=1}^m \sigma_i l\left((\langle x,\phi_1(z_i),\ldots,\langle x,\phi_K(z_i)\rangle),z_i\right) \\
		&=\frac{1}{m} \sup_{s\in \cS} \sum_{i=1}^m \sigma_i F_i(s)=\mathcal{R}(F\circ \cS)\leq \sqrt{2}K\cdot  \mathcal{R}(G\circ \cS),
		\end{aligned}
		\end{equation}
		where the last inequality follows from Theorem~\ref{thm:vector_contraction}. 
		
		Next, unraveling notation, observe $G\circ \cS=\{(\langle x,L(z_i)\phi_k(z_i)\rangle)_{i,k}: x\in \cX\}$. 
		Moreover, shifting and shrinking $\mathcal{X}$, it follows directly from the definition of Rademacher complexity that $\mathcal{R}(\cG\circ \cS)=B\cdot\mathcal{R}(A')$ where we set $A'=\{\left(\langle x,L(z_i)\phi_k(z_i)\rangle\right)_{i,k}: \|x\|_2\leq 1\}$. Thus applying Lemma~\ref{lem:rad_comp_lin_class}, we deduce 
		$\mathcal{R}(\cG\circ \cS)\leq \frac{\sqrt{\sum_{i,k}B^2L(z_i)^2\|\phi_k(z_i)\|^2_2}}{mK}.$
		Combining this estimate with \eqref{eqn:main_ineq} and
		taking expectations yields
		$$\displaystyle\EE_{S}\mathcal{R}(\cG\circ S)\leq \frac{\sqrt{2\sum_{i,k}B^2\EE_{z_i}[L(z_i)^2\|\phi_k(z_i)\|^2_2]}}{m}=\sqrt{\frac{2B^2K\displaystyle\max_{k=1,\ldots,K}\EE_{z}[L(z)^2\|\phi_k(z)\|^2_2]}{m}}.$$
		Appealing to Theorem~\ref{thm:bound_sup} completes the proof.
	\end{proof}

\end{document}